\pgfplotsset{compat=1.12}
\tikzset{snake arrow/.style=
{-,
decorate,
decoration={snake,amplitude=.2mm,segment length=1mm,post length=0mm}}
}
\newtheorem{thm}{Theorem}
\newtheorem{cor}{Corollary}
\newtheorem{lem}{Lemma}
\newtheorem{prop}{Proposition}
\theoremstyle{definition}
\newtheorem{defn}{Definition}
\newtheorem{exm}{Example}
\newtheorem{rem}{Remark}
\newcommand{\Hom}{\text{Hom}}
\renewcommand{\Im}{\text{Im}}
\newcommand{\cA}{\mathcal{A}}
\newcommand{\cK}{\mathcal{K}}
\newcommand{\cV}{\mathcal{V}}
\newcommand{\B}{\mathbb{B}}
\newcommand{\C}{\mathbb{C}}
\newcommand{\K}{\mathbb{K}}
\newcommand{\R}{\mathbb{R}}
\newcommand{\Z}{\mathbb{Z}}
\newcommand{\lra}{\longrightarrow}
\DeclareRobustCommand\longtwoheadrightarrow
\newcommand{\f}[5]{
\begin{array}{rcl}
#1:#2 & \longrightarrow & #3 \\
#4 & \longmapsto & #5  \\
\end{array}
}
\DeclareMathOperator{\Spec}{\operatorname{Spec}}
\DeclareMathOperator{\codim}{codim}	
\title{Higher Order Degrees of Affine Plane Curve Complements}
\author{Eva Elduque}
\address{Department of Mathematics,
University of Wisconsin-Madison,
480 Lincoln Drive, 
Madison WI 53706-1388, USA}
\email
{evaelduque@math.wisc.edu}
\author{Laurentiu Maxim}
\address{Department of Mathematics,
University of Wisconsin-Madison,
480 Lincoln Drive, 
Madison WI 53706-1388, USA}
\email
{maxim@math.wisc.edu}
\date{\today}
\keywords{plane curve complement, essential line arrangement, singularities, higher order degrees, Alexander polynomial}
\subjclass[2010]{32S25, 32S55, 32S05, 32S20, 57M27}
\begin{document}

\maketitle

\begin{abstract} 
We study finiteness (and vanishing) properties of the higher order degrees associated to complements of complex affine plane curves with mild singularities at infinity.
Our results impose new obstructions on the class of groups that can be realized as fundamental groups of affine plane curve complements. We also clarify the relationship between the higher order degrees and the multivariable Alexander polynomial of a  non-irreducible plane curve.
\end{abstract}


\section{Introduction}

In knot theory, a strategy to address problems that the
Alexander polynomial is not strong enough to solve is to consider
non-abelian invariants (e.g., see \cite{Cochran}). These are
Alexander-type invariants of coverings corresponding to terms of the
derived series of a knot group, and share most of the properties of
the classical Alexander invariants. Despite the
difficulties of working with modules over non-commutative rings,
there are applications to estimating knot genus, detecting fibered,
prime and alternating knots, and to knot concordance.
Higher order Alexander invariants can be associated to any finitely
presented group $G=\pi_1(X)$, in terms of coverings of $X$
given by the terms in the rational derived series of $G$. These
in turn have striking applications if one considers the fundamental
group of a link complement or that of a closed
$3$-manifold \cite{Harvey}. For example, they can be used to obtain lower bounds for
the Thurston norm, and provide new algebraic obstructions to a
$4$-manifold of the form $M^3 \times S^1$ admitting a symplectic
structure.

Motivated by their success in the classical knot theory and
low-dimensional topology,  C. Leidy and the second author initiated in
\cite{MaxLeidy} the study of higher order Alexander-type invariants for complex affine plane
curve complements. The exploration of topology of complex plane curves and of their complements is a subject that goes back to works of Zariski, Enriques, Hirzebruch, 
Deligne, or Fulton, and which has flourished in more recent research endeavors by Libgober, Dimca, Suciu, Artal-Bartolo, Cogolludo-Agust\'in, etc.
As the fundamental group of a plane curve complement is in general highly non-abelian, 
one typically considers invariants of the fundamental group that still capture most of the topology of the curve, but which are more manageable, e.g., Alexander-type invariants.  

To any affine plane curve $C \subset \C^2$, in \cite{MaxLeidy} one associates a sequence
$\{\delta_n(C)\}_n$ of (possibly infinite) integers, called the \emph{higher order
degrees} of $C$. Roughly speaking, these integers measure the
``sizes'' of quotients of successive terms in the rational derived
series $\{G_r ^{(n)}\}_{n \geq 0}$ of the
fundamental group $G=\pi_1(\C^2 \setminus C)$ of the curve complement (see Definition \ref{hodg}). It was also noted in
\cite{MaxLeidy} that the higher order degrees of plane curves (at any level
$n$) are sensitive to the ``position'' of singular points, this being one of the initial motivations  for adapting and studying  Alexander-type invariants in the context of plane curve complements.

While in theory higher order degrees of a plane curve complement can be computed by Fox free calculus from a presentation of  $G=\pi_1(\C^2 \setminus C)$, such calculations are in general tedious, see \cite{MaxLeidySurvey} for some examples. Furthermore, 
as these integers can also be interpreted as Betti-type invariants associated to the tower of coverings of $\C^2 \setminus C$ corresponding to the subgroups $G_r ^{(i)}$ (the first of which is the universal abelian cover), a priori there is no reason to expect that such invariants have any good vanishing or finiteness properties. 
The main result of \cite{MaxLeidy} proved that for curves in general position at infinity (i.e., whose projective completion is transversal in the stratified sense to the line at infinity in $\C P^2$) these higher order degrees are in fact \emph{finite}, and a uniform upper bound was given  only in terms of the degree of the curve. More precisely, one has the following result: 
\begin{thm}\cite[Corollary 4.8]{MaxLeidy}\label{th0i} \ If $C\subset \C^2$ is a reduced plane curve of degree $m$, in general position at infinity, then:
$$
\delta_n(C)\leq m(m-2), \text{\ \ for all }n\text{.}
$$
\end{thm}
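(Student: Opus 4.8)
The plan is to reduce the computation of $\delta_n(C)$ to linear algebra over a skew Laurent polynomial ring, using a finite model of $U=\C^2\setminus C$ coming from a generic pencil of lines, and then to control the resulting torsion module by the local topology at the singular points of $\overline{C}$ and at the line at infinity $L_\infty$ --- in effect, to run Libgober's proof of the classical case $n=0$ with the noncommutative coefficient systems dictated by the rational derived series. First I would set notation: $G=\pi_1(U)$, $\Gamma_n=G/G_r^{(n+1)}$, and $\epsilon\colon\Gamma_n\to\Z$ the total linking number epimorphism, so that $\Z\Gamma_n$ is a skew Laurent ring $\K_n[t^{\pm1}]$ over the Ore quotient field $\K_n$ of $\Z[\ker\epsilon]$; recall that the higher order degree $\delta_n(C)$ is the degree with respect to $\epsilon$ of the torsion $\Z\Gamma_n$-module $\mathcal{A}_n(C)=H_1(U;\Z\Gamma_n)$, that is $\delta_n(C)=\dim_{\K_n}H_1(U;\K_n[t^{\pm1}])$ once $\mathcal{A}_n(C)$ is known to be torsion. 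Since $U$ is a smooth affine surface it has the homotopy type of a finite $2$-complex; by Hamm's affine Lefschetz theorem a generic line $L$ gives $U\simeq Y$ with $Y$ obtained from $L\cap U\simeq\bigvee_m S^1$ by attaching finitely many $2$-cells whose attaching maps come from the braid monodromy of the pencil. This yields a Zariski--van Kampen presentation of $G$ on $m$ meridian generators $x_1,\dots,x_m$ whose relators are grouped according to the singular points of $\overline{C}$ and to the single fibre over the point at infinity, the latter being tame precisely because $\overline{C}$ is transversal (in the stratified sense) to $L_\infty$; Fox calculus then produces the cellular chain complex of the $\Gamma_n$-cover, $R^{\nu}\xrightarrow{\partial_2}R^{m}\xrightarrow{\partial_1}R$ with $R=\K_n[t^{\pm1}]$, $\partial_1(e_i)=\overline{x_i}-1$, and $\partial_2$ the Fox Jacobian of the relators.

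Because $\Gamma_n$ is poly-torsion-free-abelian, $R$ is a domain and the meridians are nontrivial in $\Gamma_n$, so each $\overline{x_i}-1$ is a unit in the skew field $\K_n(t)$; hence $H_0(U;\K_n(t))=0$ and $\ker\partial_1$ is free of rank $m-1$ over the noncommutative principal ideal domain $R$. Thus $\mathcal{A}_n(C)$ is $R$-torsion if and only if $\partial_2$ has maximal rank $m-1$, and in that case, diagonalizing $\partial_2\colon R^{\nu}\to\ker\partial_1$ by elementary operations over $R$, one gets $\mathcal{A}_n(C)\cong\bigoplus_{i=1}^{m-1}R/Re_i$ and $\delta_n(C)=\sum_{i=1}^{m-1}\deg_t e_i$; so the task is to bound $\sum_i\deg_t e_i$ by a quantity depending only on $m$. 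For $n=0$ this is Libgober's theorem: $\mathcal{A}_0(C)$ is torsion and its order divides the product of the local Alexander polynomials at the singular points of $\overline{C}$ times the Alexander polynomial at infinity, whence $\delta_0(C)\le m(m-2)$ via the classical bound $\sum_{p}\mu_p\le(m-1)^2$ on the total Milnor number of a reduced degree-$m$ plane curve together with the reduction by one contributed by transversality at infinity; this bound is already sharp, attained by $m$ affine lines through a common point.

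For arbitrary $n$, the key is that the relators are \emph{local} --- a relator at a singular point $p$ involves only the meridians of the branches at $p$, and the relator at infinity only the behaviour of $\overline{C}$ along $L_\infty$ --- and every link that occurs is fibered: links of plane curve singularities are iterated torus links, and the link at infinity of a curve transversal to $L_\infty$ is a torus link. Fiberedness is what prevents the higher order invariants from growing with $n$: the infinite cyclic cover of a fibered link complement is a finite complex, so the $\K_n$-dimensions of the associated torsion modules do not depend on $n$ and equal their $n=0$ values. I would then assemble these local contributions by a Mayer--Vietoris decomposition of $U$ into a regular neighbourhood of infinity, homotopy equivalent to the complement of the link at infinity, glued to a region carrying all the affine singularities, using throughout the coefficient system $R$ induced from $G$; the Mayer--Vietoris sequence forces $\mathcal{A}_n(C)$ to be $R$-torsion and bounds $\delta_n(C)$ by the same sum of local and at-infinity degrees as for $n=0$, hence by $m(m-2)$.

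The hard part will be reconciling this geometric decomposition with the noncommutative coefficients. The rational derived series is not functorial for arbitrary homomorphisms, so one cannot simply restrict $R$ to the local pieces; one must instead use the coefficient system induced from $G=\pi_1(U)$ on each subspace, invoke Strebel's class to ensure that all the relevant subgroups remain poly-torsion-free-abelian (so that Ore localization applies), and verify that localization commutes with the Mayer--Vietoris connecting homomorphisms. The second delicate point is to make precise, in the relative, non-compact setting of a regular neighbourhood of a singular point or of $L_\infty$, the claim that fiberedness stabilizes the higher order degrees --- this requires the finiteness of the infinite cyclic cover of the local complement together with the Cochran--Harvey monotonicity inequalities relating $\delta_n$ to $\delta_{n-1}$, applied to the precise links (iterated torus links, and the torus link at infinity) that arise. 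It is exactly here that general position at infinity is indispensable: without it the link at infinity need not be fibered, $\partial_2$ can drop rank, and both the torsion-ness of $\mathcal{A}_n(C)$ and the uniform bound break down.
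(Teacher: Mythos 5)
Your proposal does identify one genuinely relevant ingredient (the link at infinity of a curve in general position is the $(m,m)$-torus link, i.e.\ the Hopf link on $m$ components, whose complement is fibered, and fiberedness is what makes the bound uniform in $n$), but it is missing the one idea that actually makes the argument work, and the step you propose in its place fails. The proof in \cite{MaxLeidy} (reused in this paper in Lemmas \ref{l1}--\ref{l2} and Theorem \ref{th3}) never decomposes $U$ into an at-infinity piece plus an affine piece and never sums local contributions. Instead, one takes a tube $T$ around $L_\infty$ in $\C P^2$ and the space $X_\infty=\partial T\setminus \overline C$, and uses a Lefschetz--Zariski type theorem to conclude that the inclusion $X_\infty\hookrightarrow U$ induces an epimorphism $\pi_1(X_\infty)\twoheadrightarrow \pi_1(U)$; this yields an epimorphism $H_1(X_\infty;R_n)\twoheadrightarrow H_1(U;R_n)$ of $R_n$-modules and hence $\delta_n(C)\leq \text{rk}_{\K_n}H_1(X_\infty;R_n)$. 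Since $X_\infty$ is the complement of $m$ Hopf fibers, it fibers over $S^1$ with fiber the Milnor fiber $F$ of $x^m-y^m$, a wedge of $(m-1)^2$ circles; because the $\Gamma_n$-cover factors through the infinite cyclic cover, $H_1(X_\infty;R_n)\cong H_1(F;\K_n)$, and the Euler characteristic/$H_0$ bookkeeping gives $\text{rk}_{\K_n}H_1(X_\infty;R_n)=(m-1)^2-1=m(m-2)$, uniformly in $n$. This also gives torsionness of $\mathcal{A}_n(C)$ for free; no divisibility theorem, no Strebel-class or Mayer--Vietoris-over-all-of-$U$ machinery, and no control of the affine singular points is needed.

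By contrast, your proposed reduction cannot be repaired as stated. First, the ``region carrying all the affine singularities'' in your decomposition is (a large ball intersected with) $U$ itself up to homotopy, so the Mayer--Vietoris sequence localizes nothing and certainly does not by itself force $\mathcal{A}_n(C)$ to be $R_n$-torsion. Second, the numerology of ``the same sum of local and at-infinity degrees'' is wrong: for a pencil of $m$ concurrent affine lines (which is in general position at infinity) the local contribution at the single affine singular point is already $(m-1)^2$ and the at-infinity contribution is another $(m-1)^2$, whereas $m(m-2)=(m-1)^2-1$; the sharp bound comes from the at-infinity term alone (with a $-1$ correction), so any argument that adds affine local terms overshoots. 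Relatedly, even at $n=0$ the divisibility you quote is by the at-infinity Alexander polynomial and, separately, by the product of local polynomials; dividing by their product, as you wrote, gives a bound strictly weaker than $m(m-2)$. Finally, the Cochran--Harvey monotonicity inequalities go the wrong way for your purposes (they bound higher-order degrees from below by lower-order ones); what replaces them here is precisely the factorization of the $\Gamma_n$-cover of the fibered at-infinity link complement through its infinite cyclic cover, which is a finite $1$-complex.
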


\medskip

One of the goals of this paper is to provide generalizations of Theorem \ref{th0i} to various contexts in which the assumption of good behavior at infinity is relaxed. As the sequence of higher order
degrees  of a plane curve is an invariant of the fundamental group
of the complement, a better understanding of its properties (such as finiteness) will
impose new obstructions on the class of groups that can be realized as
fundamental groups of affine plane curve complements.

Our first result generalizes Theorem \ref{th0i} to the context of essential complex line arrangements. (Note that such line arrangements are not necessarily in general position at infinity.) We prove the following (see Theorem \ref{th2}):
\begin{thm}\label{th2i}  Assume that the complex affine plane curve $C$ defines an essential line arrangement  $\cA=\{L_1,\ldots, L_m\}\subset \C^2$  (that is, not all lines in $\cA$ are parallel).
Then,
$$
\delta_n(C)\leq m(m-2), \text{\ \ for all }n\text{.}
$$
Moreover, the equality holds only in the case where $\cA$ consists of $m$ lines going through one point, and in that case the equality holds for all $n\geq 0$.
\label{thmbound}
\end{thm}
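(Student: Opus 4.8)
The plan is to bound $\delta_n(C)$ by a quantity read off from the infinite cyclic cover of the complement, to reduce to Theorem~\ref{th0i} whenever $\cA$ can be made transverse to the line at infinity, to treat the remaining essential arrangements (those forced to contain a nontrivial parallel class) by a direct fibration argument, and to single out the pencil as the only equality case. First, write $U=\C^2\setminus C$, $G=\pi_1(U)$, let $\phi\colon G\twoheadrightarrow\Z$ be the total linking number homomorphism and $\widehat U\to U$ the corresponding infinite cyclic cover. In the notation of Definition~\ref{hodg}, the $n$-th order Alexander module of $C$ is computed from the cellular chain complex of $\widehat U$ with coefficients in the Ore domain $\Q\Gamma_n$, $\Gamma_n=\pi_1(\widehat U)/(\pi_1\widehat U)^{(n+1)}_r$; this complex is free of rank $c_1(\widehat U)$ in degree one (the number of $1$-cells of a CW model of $\widehat U$), and since $\Gamma_n\neq 1$ its first differential has rank one over the skew field of fractions $\cK_n$. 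Hence $\delta_n(C)\le c_1(\widehat U)-1$ for every $n$, with equality at all levels as soon as $\widehat U$ is homotopy equivalent to a wedge of circles; so it suffices to bound $c_1(\widehat U)$ combinatorially.

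Next, if $\cA$ is transverse to $L_\infty$ then Theorem~\ref{th0i} already yields $\delta_n(C)\le m(m-2)$. More generally, $\C^2\setminus\cA\cong\C P^2\setminus(\overline{\cA}\cup L_\infty)$ depends only on the projective arrangement $\cB:=\overline{\cA}\cup\{L_\infty\}$ of $m+1$ lines and on which line is deleted, so if some line of $\cB$ passes through only double points of $\cB$ we choose it as the new line at infinity, making $\cA$ transverse there and invoking Theorem~\ref{th0i} again. For the equality case, suppose $\cA$ is a pencil of $m$ lines through a point; a linear change of coordinates identifies $U$ with $\C^\ast\times(\C P^1\setminus\{m\text{ points}\})$, so $G\cong\Z\times F_{m-1}$. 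Since $f=\ell_1\cdots\ell_m$ is homogeneous of degree $m$ and induces $\phi$, the global Milnor fibration $f\colon U\to\C^\ast$ identifies $\widehat U$ up to homotopy with the Milnor fibre $F=\{f=1\}$, an $m$-fold cyclic cover of $\C P^1\setminus\{m\text{ points}\}$; thus $\chi(F)=m(2-m)$ and, $F$ being a smooth connected affine curve, $\widehat U\simeq\bigvee_{1+m(m-2)}S^1$. By the first step, $\delta_n(C)=m(m-2)$ for all $n\ge 0$.

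For essential arrangements that cannot be made transverse this way (exactly those whose associated $\cB$ has no line through only double points — e.g.\ when every line of $\cB$ meets a triple point — and which therefore contain a parallel class $P=\{L_1,\dots,L_p\}$ with $p\ge 2$), I would argue directly. Choosing coordinates so that the lines of $P$ are vertical, the linear projection $\C^2\to\C$ exhibits $U$ as a fibre bundle over $\C\setminus\{c_1,\dots,c_p\}$, away from finitely many further fibres, with generic fibre $\C\setminus\{m-p\text{ points}\}\simeq\bigvee_{m-p}S^1$; a Zariski--van Kampen analysis of this fibration produces an explicit finite CW model of $U$, hence (after passing to the $\phi$-cover) an upper bound on $c_1(\widehat U)$, and a count using $p\ge 2$ together with essentiality shows this bound is strictly below $1+m(m-2)$, so $\delta_n(C)<m(m-2)$. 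Combined with a refinement of the proof of Theorem~\ref{th0i} showing that, in the transverse case, equality forces a single singular point of maximal multiplicity — that is, a pencil — this yields the stated dichotomy.

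The hard part will be twofold. On one hand, one must show that a pencil really attains $m(m-2)$ at \emph{every} level $n$, not only for $n=0$: this amounts to checking that the identification of $\widehat U$ with the Milnor fibre is compatible with the whole tower of covers of Definition~\ref{hodg}, i.e.\ that all higher-order modules of the free group $\pi_1(\widehat U)$ have rank $m(m-2)$, which holds because over an Ore domain no nonzero element annihilates a free module, so the chain-level rank count propagates through the tower. On the other hand — and this is the real obstacle — the change-of-line-at-infinity trick does not reach all essential arrangements, so the fibration estimate for $c_1(\widehat U)$ in the parallel case cannot be avoided, and proving that it falls \emph{strictly} below $m(m-2)$ (rather than merely $\le$) requires careful bookkeeping of how the parallel class and the remaining intersection points contribute to the first Betti number of the infinite cyclic cover; extracting the analogous strict inequality in the transverse non-pencil case from the proof of Theorem~\ref{th0i} is the last delicate ingredient.
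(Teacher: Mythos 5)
Two steps of your plan fail, and they are the ones carrying the weight of the theorem. First, the change-of-line-at-infinity reduction is not legitimate: $\delta_n$ is an invariant of the pair $(\pi_1(U),\psi)$, where $\psi$ is the total linking number homomorphism, not of the homeomorphism type of $U$. Replacing $L_\infty$ by another line of the projective arrangement $\overline{\cA}\cup\{L_\infty\}$ produces a different affine arrangement with a different linking homomorphism, so Theorem \ref{th0i} then bounds the degrees of \emph{that} arrangement, which need not coincide with $\delta_n(C)$. The paper records exactly the relevant counterexample: the near-pencil of \cref{exmnearpencil} and the pencil of $m$ lines have homeomorphic complements, yet their degrees are $m-2$ and $m(m-2)$. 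Note also that if no two lines of $\cA$ are parallel then $C$ is already in general position at infinity and Theorem \ref{th0i} applies directly, so your trick is only ever invoked in situations where it is invalid; in particular it cannot be used for the equality analysis either, which you defer to an unproved ``refinement'' of Theorem \ref{th0i}.

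Second, your basic estimate $\delta_n(C)\le c_1(\widehat U)-1$ presupposes a \emph{finite} CW model of the infinite cyclic cover $\widehat U$, which you only produce in the pencil case (via the global Milnor fibration). Passing to the $\phi$-cover of a finite model of $U$ does not give a finite complex, and a bound in terms of the cells of $U$ itself is false: the Zariski--van Kampen model of a pencil has $m$ one-cells while $\delta_n=m(m-2)$; the $\K_n$-dimension of a torsion $\K_n[t^{\pm 1}]$-module is governed by degrees of the entries of a presentation matrix, not by its size. Consequently your treatment of the genuinely new case --- essential arrangements containing a parallel class --- has no working mechanism: the fibration over $\C\setminus\{c_1,\ldots,c_p\}$ yields a finite model of $U$, not of $\widehat U$, the finiteness of $\delta_n$ there is itself part of what must be proved (cf.\ \cref{par}), and the promised strict bound below $1+m(m-2)$ is precisely the missing content. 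The paper proceeds differently: it bounds $\delta_n$ by $\operatorname{rk}_{\K_n}H_1$ of the boundary of a tube around one line (around $\overline{L_1}$, including the point at infinity, when every line has a parallel partner), computed by a Mayer--Vietoris argument from local Milnor fibres and a Fox-calculus analysis of the Hopf link at infinity, together with surjectivity of the map from the fundamental group of the tube to $\pi_1(U)$ given by a Lefschetz-type theorem; the resulting inequality $\sum_i(d_i-1)^2-1\le m(m-2)$ also isolates the pencil as the only equality case. Your pencil computation and the reduction of the parallel-free case to Theorem \ref{th0i} are fine, but the remaining cases are where the theorem lives, and there the proposal does not go through.
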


In the special case when an arrangement  contains one line which meets all other lines transversally, we show in Theorem \ref{th4} the following result (which was asserted without a proof in \cite{MaxLeidySurvey}):
\begin{thm}\label{th4i}
Assume that the affine plane curve $C$ defines a line arrangement  $\cA=\{L_1,\ldots, L_m\}\subset \C^2$, which is obtained from an essential line arrangement
 $\cA'=\{L_1,\ldots, L_{m-1}\}$ by adjoining a line $L_m$ that is transversal to every line in $\cA'$ (that is, the singularities of the curve $C$ along the irreducible component $L_m$ consist of $m-1$ nodes). Then
$$
\delta_n(C)= 0, \text{ for all }n\geq 0.
$$
\end{thm}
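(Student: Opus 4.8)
The plan is to show that the first higher order Alexander module of $C=\cA$ vanishes identically, which forces $\delta_n(C)=0$ by the definition of the higher order degrees (see \cite{MaxLeidy}, and \ref{hodg}). Write $G=\pi_1(\C^2\setminus\cA)$, $G'=\pi_1(\C^2\setminus\cA')$, and let $\mu_m$ be a meridian of $L_m$. The crucial geometric input is that \emph{$\mu_m$ is central in $G$}. To see this, project $\C^2$ linearly onto $\C$ with fibers parallel to $L_m$; then $L_m$ is the fiber over some point $s_0$, while each $L_i$ with $i<m$ is transversal to the fibers. Because $L_m$ meets every $L_i$ transversally, no multiple point of $\cA'$ lies on $L_m$, so the $m-1$ points cut out on a fiber by $L_1,\dots,L_{m-1}$ stay pairwise distinct as the fiber tends to $L_m$; hence the braid monodromy around the fiber $s_0$ is trivial. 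Running the Zariski--van Kampen method with this (non-generic) pencil presents $G$ on the meridians $x_1,\dots,x_{m-1}$ of $L_1,\dots,L_{m-1}$ in a generic fiber together with the meridian $\mu_m$ of the deleted fiber $L_m$: triviality of the monodromy around $s_0$ gives $[\mu_m,x_i]=1$ for all $i$, and the remaining relations only involve the $x_i$, so $\mu_m$ commutes with a generating set and is therefore central.

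From centrality I would extract three facts. First, re-adding $L_m$ produces a central extension $1\to\Z\langle\mu_m\rangle\to G\to G'\to 1$. Second, since $\mu_m$ maps to a basis element of $H_1(\C^2\setminus\cA;\Z)=\Z^m$ we have $\langle\mu_m\rangle\cap[G,G]=1$, and an induction over the rational derived series then shows that $G\to G'$ restricts to isomorphisms $G_r^{(i)}\xrightarrow{\ \sim\ }(G')_r^{(i)}$ for all $i\ge1$; consequently, with $\Gamma_n=G/G_r^{(n+1)}$ and $\Gamma'_n=G'/(G')_r^{(n+1)}$, the group $\Gamma_n$ splits as $\Gamma_n^{(1)}\times\langle\bar\mu_m\rangle$, where $\Gamma_n^{(1)}=\ker(\Gamma_n\to\Z)$ is the kernel of the total linking number and $\Gamma_n^{(1)}\cong\Gamma_n/\langle\bar\mu_m\rangle=\Gamma'_n$. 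Third, therefore the skew field $\cK_n=\cQ(\Z\Gamma_n^{(1)})$ equals $\cQ(\Z\Gamma'_n)$, and in the skew Laurent ring $\cR_n=\cK_n[t^{\pm1}]$ used to define $\delta_n(C)$ the variable $t=\bar\mu_m$ is central.

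Now I would compute. Since $H_1$ of a space with local coefficients agrees with the corresponding group homology, $H_1(\C^2\setminus\cA;\cR_n)=H_1(G;\cR_n)$, and the Lyndon--Hochschild--Serre spectral sequence of the central extension above has $E^2_{pq}=H_p(G';H_q(\Z;\cR_n))$. As $\mu_m$ acts on $\cR_n$ by multiplication by the central element $t$, which is not a zero divisor, one has $H_q(\Z;\cR_n)=0$ for $q\ge1$ and $H_0(\Z;\cR_n)=\cR_n/(t-1)=\cK_n$, so the spectral sequence collapses and $H_1(G;\cR_n)\cong H_1(G';\cK_n)=H_1(\C^2\setminus\cA';\cQ(\Z\Gamma'_n))$. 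Writing $\cK'_n=\cQ(\Z(\Gamma'_n)^{(1)})$, the ring $\cQ(\Z\Gamma'_n)$ is the Ore localization of the (noncommutative) PID $\cK'_n[t^{\pm1}]$ at all nonzero elements, so by flatness $H_1(\C^2\setminus\cA';\cQ(\Z\Gamma'_n))\cong H_1(\C^2\setminus\cA';\cK'_n[t^{\pm1}])\otimes_{\cK'_n[t^{\pm1}]}\cQ(\Z\Gamma'_n)$; this is the free part of the finitely generated $\cK'_n[t^{\pm1}]$-module $H_1(\C^2\setminus\cA';\cK'_n[t^{\pm1}])$, and it vanishes exactly when that module is torsion, i.e.\ exactly when $\delta_n(\cA')$ is finite. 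Since $\cA'$ is essential, Theorem \ref{th2i} gives $\delta_n(\cA')\le(m-1)(m-3)<\infty$ for every $n$; hence $H_1(\C^2\setminus\cA;\cR_n)=0$ for all $n$, and so $\delta_n(C)=0$ for all $n\ge0$. The one genuinely delicate step is the centrality of $\mu_m$ — the bookkeeping for the non-generic pencil — after which the argument is purely formal.
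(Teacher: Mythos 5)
Your argument is correct, but it follows a genuinely different route from the paper's. The paper quotes Oka's result (\cite[Lemma 2]{oka}) for the central extension $1\to\Z\to\pi_1(U)\to\pi_1(U')\to1$, massages the Zariski--Van Kampen presentation into the form (\ref{eqnpres}) where $x_m$ is central and the remaining relations involve only $x_1,\ldots,x_{m-1}$, and then carries out an explicit Fox calculus computation: the presentation matrix (\ref{eqnmatrix}) has a block $B(n)$ coming from $\cA'$, whose row rank over $\cK_n(U')$ is $m-2$ by \cref{lemRank} (this is where finiteness of $\delta_n(C')$, i.e.\ \cref{th2}, enters), and row/column reduction then forces the localized Alexander module to vanish. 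You replace the matrix bookkeeping by homological algebra: after establishing centrality of the meridian $\mu_m$ (your pencil-parallel-to-$L_m$ braid monodromy argument is exactly the standard proof of Oka's lemma, and your key geometric observation --- no multiple point of $\cA'$ lies on $L_m$, so the configuration of fiber points extends over the whole disk around $s_0$ and the local braid is trivial --- is the right one), you identify $\Gamma_n\cong\bar\Gamma_n\times\langle\bar\mu_m\rangle$ with $\bar\Gamma_n\cong\Gamma_n(U')$ (the paper's \cref{remIdentification}, which you re-derive via the induction on the rational derived series), and then the five-term exact sequence of the central extension with $R_n$-coefficients gives $H_1(U;R_n)\cong H_1\bigl(U';\cK_n(U')\bigr)$, since $t-1$ is a central non-zero-divisor so $H_1(\Z;R_n)=0$ and $H_0(\Z;R_n)=R_n/(t-1)\cong\cK_n(U')$; the right-hand side vanishes precisely because $\delta_n(C')$ is finite, which is again \cref{th2}. (The passage between space homology and group homology is harmless here: both compute the abelianization of $G_r^{(n+1)}$ with $\Z\Gamma_n$-coefficients, and $R_n$, $\cK_n(U')$ are flat localizations.) Both proofs thus rest on the same two pillars --- centrality of the meridian of the transversal line and finiteness of $\delta_n$ for the essential sub-arrangement --- but your spectral-sequence formulation is more conceptual, avoids the matrix reduction, and transfers verbatim to the curve case of \cref{thmVanishing}, whereas the paper's Fox calculus argument is more elementary and self-contained. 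The only step I would tighten is the assertion that the remaining Zariski--Van Kampen relations involve only the $x_i$: this is true because the monodromy relations around the other special fibers are braid relations on the fiber generators alone, but it deserves an explicit sentence (or simply cite \cite[Lemma 2]{oka}, as the paper does).
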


At the opposite spectrum, i.e., if the plane curve $C$ defines a line arrangement $\cA=\{L_1,\ldots, L_m\}\subset \C^2$ consisting of $m$ distinct parallel lines, then an easy calculation shows that (see Proposition \ref{par}):
$$
\delta_n(C)=
\begin{cases}
\infty, & m>1, \\
0, & m=1,
\end{cases}
$$ 
for all $n \geq 0$.

We also prove the following generalization of Theorem \ref{th0i} in the context when the plane curve $C$ is allowed to have {mild} singularities at infinity. More precisely, we show the following (see Theorem \ref{th3}):
\begin{thm}\label{th3i}
Let $C\subset \C^2$ be a reduced plane curve of degree $m$, let $\overline C$ be its closure in $\C P^2$ and let $L_\infty$ be the line at infinity. Suppose that the intersections of $L_\infty$ and $\overline C$ are either transversal or $L_\infty$ is the tangent line to $\overline C$ at a smooth point and it is a simple tangent there (i.e., it has multiplicity $2$). If any of the following two conditions hold
\begin{enumerate}
\item[(a)] $m=2$;
\item[(b)] at least one of the intersections of $\overline C$ and $L_\infty$ is transversal,
\end{enumerate}
then 
$$
\delta_n(C)\leq m(m-2),
$$ for all $n \geq 0$.
\end{thm}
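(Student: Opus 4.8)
The plan is to reduce Theorem \ref{th3i} to the line arrangement case, Theorem \ref{th2i}, by comparing the relevant coverings of the curve complement with those of a suitable arrangement complement. The starting point is the standard reformulation of higher order degrees: $\delta_n(C)$ is the rank of the $n$-th Alexander module of $X = \C^2 \setminus C$ over the Ore localization of $\Z[G_r^{(n)}/G_r^{(n+1)}]$, computed from the chain complex of the cover $X_n \to X$ associated to $G_r^{(n+1)}$. The key homological input, already used to prove Theorems \ref{th0i} and \ref{th2i}, is that if $X$ has the homotopy type of a finite CW complex with good properties (e.g. a wedge-of-$2$-spheres structure on the universal abelian cover, or more precisely control over $H_1$ and $H_2$ of the infinite cyclic / universal abelian covers), then one gets the uniform bound $\delta_n \leq b_2$ where $b_2$ is essentially the second Betti number of the appropriate cover, and for a degree-$m$ curve in general position this is $m(m-2)$.

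First I would set up the degeneration: choose a generic pencil of lines through a point on $L_\infty$ not on $\overline{C}$, or rather degenerate $\overline{C}$ to a union of $m$ lines while controlling the behavior at infinity. More concretely, under hypothesis (b) there is a transversal intersection point $p \in \overline{C} \cap L_\infty$; I would use $p$ to build a Lefschetz-type pencil and compare $\pi_1(\C^2 \setminus C)$ (and its rational derived series quotients) with those of the arrangement $\cA$ obtained as the "tangent cone" or a generic deformation. The cleanest route is likely via the braid monodromy or via Zariski–van Kampen: the presence of at most one simple tangency and otherwise transversal intersections at infinity means the curve differs from "general position at infinity" only by the single tangency point, and one can isolate its contribution. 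In case (a), $m = 2$, the curve is a conic, and $\C^2 \setminus C$ with $L_\infty$ simply tangent is homotopy equivalent to a very explicit space (a conic tangent to the line at infinity is, after a change of coordinates, a parabola $y = x^2$, whose complement in $\C^2$ deformation retracts onto a circle), so $\delta_n(C) = 0 \leq m(m-2) = 0$ can be checked by hand. This handles (a) immediately and suggests that the bound is far from sharp when tangencies are present.

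For (b), the heart of the argument is to show that adjoining the single simple tangency at infinity does not increase the relevant Betti number beyond $m(m-2)$. I would run the same spectral sequence / chain-level estimate as in the proof of Theorem \ref{th2i} (Theorem \ref{th2}), but now applied to a CW model of $\C^2 \setminus C$ adapted to the projection from the transversal point $p \in L_\infty \cap \overline{C}$: this projection realizes $\C^2 \setminus C$ as (an open subset of) a fibration over $\C$ whose generic fiber is $\C$ minus $m - 1$ points (since a generic vertical line meets $\overline C$ in $m$ points, one of which escapes to infinity along the direction of $p$), and the monodromy is controlled by the braid monodromy of the affine curve. The Euler-characteristic / rank count then yields that the second Alexander module over the Ore ring has rank at most $(m-1)(m-2) + (\text{correction from the tangency}) \leq m(m-2)$; making the correction term estimate precise is where hypothesis (b) — as opposed to "all intersections tangent" — is essential, since one transversal point at infinity is what lets the generic vertical fiber lose exactly one point cleanly.

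The main obstacle I anticipate is controlling the local contribution of the simple tangency point at infinity to the chain complex of the derived-series cover: transversality at infinity is exactly what made the argument in \cite{MaxLeidy} clean (the projective curve meets $L_\infty$ in a stratified-transversal way, so a Milnor-fiber/affine-cone argument applies), and a tangency breaks this. I would handle it by a local analysis at the tangency point — blowing up or passing to a local normal form $y = x^2$ near infinity — to show the extra vanishing cycle contributes at most the expected amount, and then feed this back into the global rank estimate. The requirement in (b) that at least one intersection be transversal is what guarantees there is a "good direction" from which to project so that the affine curve complement fibers over $\C$ with generic fiber $\C \setminus \{m-1 \text{ points}\}$; without it (all points tangent) the fiber count and hence the bound would degrade, which is presumably why the theorem is stated with these two alternative hypotheses.
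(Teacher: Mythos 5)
Your treatment of case (a) is fine (a conic simply tangent to $L_\infty$ is a parabola, its complement is $\C\times\C^*$, so $\pi_1\cong\Z$ and $\delta_n=0$), but for case (b) what you have written is a plan rather than a proof, and the two routes you sketch both have genuine gaps. First, the degeneration idea (deforming $\overline C$ to a line arrangement, or to its ``tangent cone'', so as to invoke Theorem \ref{th2}) is unsupported: $\delta_n$ is an invariant of $\pi_1(\C^2\setminus C)$ together with the linking-number homomorphism, the fundamental group changes in an uncontrolled way under such degenerations, and no semicontinuity or comparison statement for higher order degrees along a degeneration is known or proved here. Second, the projection-from-$p$ idea does not give a fibration (lines through $p$ tangent to $C$ or through singular points of $C$ are singular fibers), and even granting a chain-level model adapted to the pencil, the decisive point --- bounding the ``correction from the tangency'' by $m-2$ so that $(m-1)(m-2)+\text{correction}\le m(m-2)$ --- is exactly the content of the theorem and is nowhere carried out; saying one would ``blow up or pass to the local normal form $y=x^2$'' does not produce the needed rank estimate over the noncommutative rings $R_n$, where Euler-characteristic arguments alone do not suffice without controlling $H_0$ and $H_2$ and the relevant units.

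For comparison, the paper does not degenerate or fiber at all: it works with the link at infinity $X=\partial T\setminus C$, where $T$ is a tube around $L_\infty$ in $\C P^2$, uses a Zariski--Lefschetz argument to get a surjection $\pi_1(X)\twoheadrightarrow\pi_1(\C^2\setminus C)$ and hence $\delta_n(C)\le \operatorname{rk}_{\K_n}H_1(X;R_n)$, and then runs the Mayer--Vietoris decomposition of $X$ into the piece $N$ over the smooth part of $L_\infty$ and the local link complements at the points of $\overline C\cap L_\infty$. The local contribution of each simple tangency is the complement of a $(2,4)$ torus link, and a Fox calculus computation with the presentation $\langle a,b\mid ababa^{-1}b^{-1}a^{-1}b^{-1}\rangle$ (using $\psi(a)=1$, $\psi(b)=-m$) bounds its contribution by $m-1$; transversal points and the tori contribute zero, and the count $m(r-2)+(m-r)(m-1)+1\le m(m-2)$ for $r\le m-1$ finishes the proof. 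Hypothesis (b) enters not to provide a fibration direction but to guarantee a meridian with $\psi=1$ among the generators of $\pi_1(N)$ (and of $\pi_1(X)$), which is what makes the relevant Fox-derivative entries units in $R_n$ and keeps $H_0$ and $H_2$ under control. Your proposal contains neither the surjection from the link at infinity nor any substitute for this local computation, so as it stands it does not establish the bound in case (b).
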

Moreover, we generalize \cref{th4i} to the context of plane curves as follows (see \cref{thmVanishing}):
\begin{thm}\label{th5i}
Let $n\geq 0$. Assume that the affine plane curve $C$ is of the form $C=L\cup C'$, where $C'$ is a curve of degree $m-1$ in $\C^2$ such that $\delta_n(C')$ is finite, and $L$ is a line transversal to $C'$ such that $L\cap C'$ consists of $m-1$ distinct points. Then,
$$
\delta_n(C)= 0.
$$
\end{thm}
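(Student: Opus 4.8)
The plan is to reduce the statement to a formal computation with the rational derived series, after first recognizing $\pi_1(\C^2\setminus C)$ as a product with $\Z$.

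\textbf{Step 1: the fundamental group.} The hypothesis on $L$ says exactly that $L$ and $C'$ meet transversally at $\deg(L)\cdot\deg(C')=m-1$ distinct points, each a smooth point of $C'$. Hence, by the Oka--Sakamoto product theorem (or directly by the Zariski--van Kampen method applied to the linear pencil of lines parallel to $L$ --- for this pencil, $L\setminus(L\cap C')$ is a regular fibre, and deleting it only adjoins a central meridian), one obtains an isomorphism
$$
G:=\pi_1(\C^2\setminus C)\ \cong\ G'\times\Z,\qquad G':=\pi_1(\C^2\setminus C'),
$$
under which the central $\Z$ is generated by a meridian $\mu$ of $L$ and the total linking number homomorphism $\phi\colon G\to\Z$ of $C$ corresponds to $\phi'+\mathrm{pr}_\Z$, where $\phi'\colon G'\to\Z$ is the total linking number homomorphism of $C'$. (When $m=1$ the statement is trivial.)

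\textbf{Step 2: reduction to a rank over a larger group ring.} Recall that the $n$-th higher order Alexander module of $C$ is $\bigl(G_r^{(n+1)}\bigr)^{\mathrm{ab}}$, viewed as a module over $\Z[G/G_r^{(n+1)}]$ by conjugation, and that $\delta_n(C)=\rk_{\Z[\Lambda_n]}\bigl(G_r^{(n+1)}\bigr)^{\mathrm{ab}}$, where $\Lambda_n:=\ker\bigl(G/G_r^{(n+1)}\to\Z\bigr)$ is the quotient induced by $\phi$; similarly for $C'$. Since the rational derived series commutes with finite products and $\Z$ has trivial rational derived series, $G_r^{(k)}=(G')_r^{(k)}\times\{0\}$ for all $k\geq 1$, and since $\mu$ is central it acts trivially by conjugation on $G_r^{(n+1)}=(G')_r^{(n+1)}$. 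Setting $\Gamma':=G'/(G')_r^{(n+1)}$, we therefore have $G/G_r^{(n+1)}=\Gamma'\times\langle\bar\mu\rangle$, the module $\bigl(G_r^{(n+1)}\bigr)^{\mathrm{ab}}$ equals $\bigl((G')_r^{(n+1)}\bigr)^{\mathrm{ab}}$ with $\bar\mu$ acting as the identity, and $\Lambda_n$ is the graph of $-\bar\phi'\colon\Gamma'\to\Z$, hence is carried isomorphically onto $\Gamma'$ by the first projection, compatibly with the module structures. Consequently
$$
\delta_n(C)=\rk_{\Z[\Gamma']}\bigl((G')_r^{(n+1)}\bigr)^{\mathrm{ab}},\qquad\text{while}\qquad \delta_n(C')=\rk_{\Z[\Lambda']}\bigl((G')_r^{(n+1)}\bigr)^{\mathrm{ab}},
$$
where $\Lambda':=\ker\bigl(\bar\phi'\colon\Gamma'\to\Z\bigr)\subsetneq\Gamma'$: for $C$ the rank of one and the same module is computed over the \emph{full} group ring $\Z[\Gamma']$, not over the subring $\Z[\Lambda']$.

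\textbf{Step 3: the rank drops to zero.} Since $\Gamma'$ is poly-torsion-free-abelian and $\Gamma'/\Lambda'\cong\Z$, the ring $\Z[\Gamma']$ is a skew Laurent polynomial ring $\Z[\Lambda'][t^{\pm1}]$ over the Ore domain $\Z[\Lambda']$, with Ore skew field $\mathbb K_{\Gamma'}=\mathrm{Frac}\bigl(\mathbb K_{\Lambda'}[t^{\pm1}]\bigr)$. For any $\Z[\Gamma']$-module $M$ there is a natural $\mathbb K_{\Lambda'}$-linear isomorphism $\mathbb K_{\Lambda'}[t^{\pm1}]\otimes_{\Z[\Gamma']}M\cong\mathbb K_{\Lambda'}\otimes_{\Z[\Lambda']}M$. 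Thus if $\rk_{\Z[\Lambda']}M<\infty$, the left-hand side is a finite-dimensional $\mathbb K_{\Lambda'}$-vector space equipped with an automorphism $t$, hence is a torsion $\mathbb K_{\Lambda'}[t^{\pm1}]$-module; therefore $\mathbb K_{\Gamma'}\otimes_{\Z[\Gamma']}M=0$, i.e.\ $\rk_{\Z[\Gamma']}M=0$. Applying this with $M=\bigl((G')_r^{(n+1)}\bigr)^{\mathrm{ab}}$ --- a finitely generated $\Z[\Gamma']$-module, because $\C^2\setminus C'$ has the homotopy type of a finite CW-complex --- together with the hypothesis $\delta_n(C')<\infty$, Step 2 gives $\delta_n(C)=0$.

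The only genuinely geometric ingredient is the product decomposition of Step 1; everything afterwards is formal bookkeeping with the derived series and Ore localizations, whose content is precisely that adjoining the central meridian of $L$ trades the finiteness of the rank over $\Z[\Lambda']$ for its vanishing over the larger ring $\Z[\Gamma']$. If one prefers not to invoke Oka--Sakamoto, the delicate point --- which I expect to be the main obstacle --- is to verify via Zariski--van Kampen that deleting the regular fibre $L\setminus(L\cap C')$ from the pencil of lines parallel to $L$ introduces no relation beyond the centrality of $\mu$; this rests on the fact that $L\cap C'$ having $\deg(C')$ points forces $C'$ to have no component parallel to $L$ and the pencil to be locally trivial near that fibre, so that the monodromy around the deleted fibre is trivial. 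This is, mutatis mutandis, the argument behind \cref{th4i}, of which the present theorem is the natural generalization to reducible curves.
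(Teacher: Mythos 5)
Your proof is correct, and its geometric half is the same as the paper's: the paper also rests on the splitting $\pi_1(\C^2\setminus C)\cong\pi_1(\C^2\setminus C')\times\Z$ with the central factor generated by a meridian of $L$ (obtained from Oka's central-extension lemma plus a Zariski--van Kampen presentation, exactly as in \cref{th4}), and on the resulting identification of $\bar\Gamma_n(U)$ with $\Gamma_n(U')$ (\cref{remIdentification}). Where you genuinely differ is in extracting the vanishing. The paper stays with Fox calculus: it writes the block presentation matrix (\ref{eqnmatrix}) for $H_1(U,u_0;R_n)$, proves the analogue of \cref{lemRank} (rank of $B(n)$ equals $m-2$, which is where $\delta_n(C')<\infty$ enters), and reduces the matrix by hand. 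You instead argue at the level of formula (\ref{eq2}): since the central meridian acts trivially and the rational derived series commutes with direct products, the $n$-th order module of $C$ is the same module as that of $C'$, but $\delta_n(C)$ is its rank over the Ore field of all of $\Z\Gamma_n(U')$ rather than of $\Z\bar\Gamma_n(U')$, and the vanishing of that larger rank is precisely the torsion reformulation of $\delta_n(C')<\infty$ (\cite[Remark 3.8]{MaxLeidy}), which your Step 3 re-proves. This buys a shorter, matrix-free argument, and in fact the sharper identity $\delta_n(C)=\rk_{\cK_n(U')}H_1(U';\cK_n(U'))$; the cost is the bookkeeping of your Step 2 (product formula for the rational derived series, trivial central action, kernel of $\bar\psi$ as a graph), all of which checks out, modulo the left/right module conventions that you, like the paper, suppress. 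Your normalization of $\psi$ as $\phi'+\mathrm{pr}_{\Z}$ differs from the paper's change of generators making $\psi$ the projection, but only the kernel and the central action matter, so either choice works; and citing Oka--Sakamoto in Step 1 is legitimate, since its hypothesis is exactly the transversality assumed in the theorem.
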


A natural question in the context of non-commutative Alexander-type invariants is to relate the higher order degrees of a plane curve complement to the previously studied Alexander-type invariants, such as the Alexander polynomials. Preliminary steps in this direction have already been made in \cite{MaxLeidy}, were the authors showed
that if the (one-variable) Alexander polynomial of an irreducible plane curve is
trivial then {\it all} higher-order degrees $\{\delta_n\}_n$ vanish. (If the curve is irreducible, then $\delta_0(C)$ is the degree of the Alexander polynomial of $C$.)
In relation with the universal abelian
invariants of a curve, it was also noted in \cite{MaxLeidy} that if the codimension (in the character torus) 
of the first characteristic variety of the plane curve complement is $>1$ then
$\delta_0(C)=0$ (this fact was first pointed out by A. Libgober in an informal conversation with the second author, see also Corollary \ref{corcodim}). However, curves (e.g., in
general position at infinity) may have supports of codimension 
one in the character variety (cf. \cite{Li}), and for this boundary case we show here that  $\delta_0(C)$ is the degree
of the multivariable Alexander polynomial $\Delta_C$ of the plane curve $C$ (see Theorem \ref{thmAP}).


\medskip

The paper is structured as follows. In Section \ref{notations}, we recall the definition of higher order degrees of an affine plane curve complement. Section \ref{cla} supplies proofs of Theorems \ref{th2i} and \ref{th4i}. Theorems \ref{th3i} and \ref{th5i} are proved in Section \ref{singinf}. Finally, in Section \ref{char}, we indicate the relation between $\delta_0(C)$ and the degree of the multivariable Alexander polynomial of the plane curve complement in the case when $C$ is not irreducible.


\section{Higher-order invariants of a plane curve complement}\label{notations}

Though most of the background material presented in this section applies to any finitely presented group, we focus mainly on fundamental groups of complex affine plane curve complements.

Let $C=\{f(x,y)=0\}$ be a reduced curve
in $\C^2$ of degree $m$, with complement $$U:=\C^2\setminus C,$$ and denote by 
$G:=\pi_1(U)$ the fundamental group of its complement.
If $C$ has $s$ irreducible components, then \begin{equation}\label{eq1} H_1(G;\Z)=H_1(U;\Z)=G/G'=\Z^s,\end{equation} generated by meridian loops about the smooth parts of the irreducible components of
$C$.

In this section we recall the definition of the higher-order Alexander-type invariants of the group $G$. These were originally used in the study of knots and, respectively,
$3$-manifolds, see e.g.,  \cite{Cochran,Harvey}, and they were ported to the study of plane curve complements in \cite{MaxLeidy}, e.g., to show that certain groups cannot be realized
as fundamental groups of such complements. 

\begin{defn}\label{hodg} The {\it rational derived serie}s of the group $G$ is defined as follows:
$G_r ^{(0)}=G$, and for $n \geq 1$,
$$G_r ^{(n)}=\{g \in G_r ^{(n-1)} \mid g^k \in
[G_r ^{(n-1)},G_r ^{(n-1)}], \ \text{for some} \ k \in \Z \setminus \{0\}
\}.$$ 
\end{defn}

It is easy to see that $G_r ^{(i)} \triangleleft G_r ^{(j)}
\triangleleft G$, if $i \geq j \geq 0$. 
The successive quotients of the rational derived series are
torsion-free abelian groups. In fact (cf. \cite[Lemma 3.5]{Harvey}),
$$G_r^{(n)}/G_r^{(n+1)} \cong \left(G_r^{(n)}/[G_r^{(n)},
G_r^{(n)}] \right)/\{\Z-\text{torsion}\}.$$ Therefore, for 
$G=\pi_1(\C^2 \setminus C)$ we get from (\ref{eq1}) that $G'=G_r^{(1)}$.

The use of the rational derived
series as opposed to the usual derived series is needed in order to avoid
zero-divisors in the group ring $\Z\Gamma_n$, where $$\Gamma_n:=G/G_r ^{(n+1)}.$$
By construction,  $\Gamma_n$ is a
poly-torsion-free-abelian group, in short a PTFA (\cite[Corollary 3.6]{Harvey}), i.e., it
admits a normal series of subgroups such that each of the successive
quotients of the series is torsion-free abelian. Then $\Z\Gamma_n$
is a right and left Ore domain, so it embeds in its classical right
ring of quotients $\cK_n$, a skew-field. Every module over $\cK_n$ is a free module, and such modules have a well-defined rank $\text{rk}_{\cK_n}$ which is additive on short exact sequences. (These statements also apply to the right ring of quotients $\cK$ of the group ring $\Z \Gamma$ of any PTFA group $\Gamma$, e.g., see \cite[Remark 2.4]{MaxLeidy} and the references therein.)

\begin{defn}\rm The \emph{$n$-th order Alexander module} of (the complement of) the plane curve $C$
is defined as
$$\mathcal{A}^{\Z}_n(C)=H_1(U;\Z\Gamma_n)=H_1(U_{\Gamma_n};\Z),$$
where $U_{\Gamma_n}$ is the covering of $U$ corresponding to the
subgroup $G_r^{(n+1)}$.  That is,
$$\mathcal{A}^{\Z}_n(C)=G_r^{(n+1)}/[G_r^{(n+1)},G_r^{(n+1)}],$$ viewed as a
right $\Z \Gamma_n$-module.\newline The \emph{$n$-th order rank}
of (the complement of) $C$ is:
$$r_n(C)=\text{rk}_{\mathcal{K}_n}H_1(U;\cK_n).$$
\end{defn}

\begin{rem}\rm
Note that
$\mathcal{A}^{\Z}_0(C)=G_r^{(1)}/[G_r^{(1)},G_r^{(1)}]=G'/G''$ is
just the universal abelian Alexander module of the complement.
\end{rem}

\begin{exm}\label{sm-nod}  If the curve $C$ is in general position at infinity (i.e., the line at infinity in $\C P^2$ is transversal in stratified sense to the projective completion of $C$), and it is nonsingular or has only nodal singular points (i.e., locally defined by $x^2-y^2=0$), then $G=\pi_1(\C^2\setminus C)$ is abelian, and therefore
$\mathcal{A}^{\Z}_n(C)=0$ for all $n$ (e.g., see \cite[Remark
3.4]{MaxLeidy}).\end{exm}

In \cite{MaxLeidy}, one associates to any plane curve $C$ (or, equivalently, to the fundamental group $G$ of its complement) a
sequence of non-negative integers $\delta_n(C)$ as follows (it is
more convenient to work over a principal ideal domain, or a PID for short, so we look for a ``convenient"
one): Let $\psi \in H^1(G;\Z)$ be the primitive class representing
the linking number homomorphism $$G \overset{\psi}{\lra} \Z, \ \alpha
\mapsto \text{lk}(\alpha,C).$$ Since $G'$ is in the kernel of $\psi$,
we have a well-defined induced epimorphism $\bar{\psi} : \Gamma_n
\to \Z$. Let $\bar{\Gamma}_n =\ker \bar{\psi}$. Then $\bar{\Gamma}_n$
is a PTFA group, so $\Z\bar{\Gamma}_n$ has a right ring of quotients
$$\K_n=(\Z\bar{\Gamma}_n)S_n ^{-1},$$ where $S_n=\Z\bar{\Gamma}_n \setminus \{ 0 \}$.
Set $$R_n:=(\Z\Gamma_n)S_n^{-1}.$$ Then $R_n$ is a flat left
$\Z\Gamma_n$-module.

A very important role in what follows is played by the fact that $R_n$ is a PID; in fact, $R_n$ isomorphic to the ring of skew-Laurent
polynomials $\K_n[t^{\pm 1}]$. This can be seen as follows: by choosing a $t \in
\Gamma_n$ such that $\bar {\psi} (t)=1$, we get a splitting $\phi$
of $\bar{\psi}$, and the embedding $\Z\bar{\Gamma}_n \subset \K_n$
extends to an isomorphism $R_n \cong \K_n[t^{\pm 1}]$. However this
isomorphism depends in general on the choice of splitting of $\bar
\psi$.

\begin{defn}\rm
(1) \ The \emph{$n$-th order localized Alexander module} of the plane 
curve $C$ is defined to be $$\mathcal{A}_n(C)=H_1(U;R_n),$$ viewed
as a right $R_n$-module.  If we choose a splitting $\phi$ to
identify $R_n$ with $\K_n[t^{\pm 1}]$, we define
$\mathcal{A}^{\phi}_n(C)=H_1(U;\K_n[t^{\pm 1}])$. \newline (2) \
The \emph{$n$-th order degree of $C$} is defined to be:
$$\delta_n(C)=\text{rk}_{\K_n} \mathcal{A}_n(C)=\text{rk}_{\K_n} \mathcal{A}^{\phi}_n(C).$$
\end{defn}

\begin{rem}\rm Note that $\delta_n(C) < \infty$ if and only if
$\text{rk}_{\mathcal{K}_n}H_1(U;\mathcal{K}_n)=0$, i.e.
$\mathcal{A}_n(C)$ is a torsion $R_n$-module.
\end{rem}

\begin{rem}\rm
If the plane curve $C$ is irreducible, then $\delta_{0}(C)$ is the degree of the Alexander polynomial of $C$; see \cite[Remark 3.9]{MaxLeidy}.
\end{rem}

The higher order degrees $\delta_n(C)$ are integral invariants of the fundamental
group $G$ of the complement (endowed with the linking number homomorphism). Indeed, by \cite{Harvey}, one has:
\begin{equation}\label{eq2} \delta_n(C)=\text{rk}_{\K_n} \left(
{G^{(n+1)}_r}/[G^{(n+1)}_r,G^{(n+1)}_r] \otimes_{\Z\bar{\Gamma}_n}
\K_n \right).\end{equation} Note that since the isomorphism between $R_n$ and $\K_n[t^{\pm
1}]$ depends on the choice of splitting, one cannot define in
a canonical way a higher-order version of the Alexander polynomial. However, for any choice of splitting, the degree of the associated higher-order Alexander
polynomial is the same, hence this yields a
well-defined invariant of $G$, which is exactly the higher-order
degree $\delta_n$ defined above.

The higher-order degrees of $C$ may be computed by means of Fox free calculus from a presentation of $G=\pi_1(\C^2 \setminus C)$, see \cite[Section 6]{Harvey} for details. Such computational techniques will be used freely in this paper.

It was shown in \cite{MaxLeidy} that if $C$ is an irreducible plane curve, or a curve in general position at infinity (i.e., for which the line at infinity in $\C P^2$ is transversal in the stratified sense to the projective completion of $C$), then the higher-order degrees  $\delta_n(C)$ are finite. More precisely, one has the following:
\begin{thm}\label{th1} If $C\subset \C^2$ is a reduced plane curve of degree $m$, in general position at infinity, then:
$$
\delta_n(C)\leq m(m-2), \text{\ \ for all }n\text{.}
$$
In particular, the $n$-th order Alexander module  $\mathcal{A}^{\Z}_n(C)$ is a torsion
$\Z\Gamma_n$-module, for all $n$.
\end{thm}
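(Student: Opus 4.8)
The plan is to compute the localized Alexander module $\mathcal{A}_n(C)=H_1(U;R_n)$ by Fox calculus from a presentation of $G=\pi_1(U)$ with meridian generators, reinterpret $\delta_n(C)$ as the $t$-degree of a greatest common divisor of minors of the Fox Jacobian over the PID $R_n\cong\K_n[t^{\pm1}]$, and then bound that degree using the hypothesis at infinity. First I would fix a presentation: by Zariski's theorem $G$ is generated by the $m$ meridians $x_1,\dots,x_m$ of a generic affine line (which meets $C$ in $m$ points), and by the Zariski--van Kampen theorem applied to a generic pencil of lines based at a point of the line at infinity $L_\infty$ one gets $G=\langle x_1,\dots,x_m\mid r_1,\dots,r_k\rangle$ with each $r_i$ a braid-monodromy relation attached to a degenerate member of the pencil. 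Transversality of $\overline C$ to $L_\infty$ enters twice: it forces $\psi(x_j)=\mathrm{lk}(x_j,C)=1$ for every meridian, and it controls the braid monodromy of the pencil (the total braid being essentially a full twist, the local braids having complexity prescribed by the singularities of $\overline C$), so that the total length $\sum_i|r_i|$ of the relations in the generators $x_j^{\pm1}$ is bounded in terms of $m$ alone.

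Given such a presentation, $\mathcal{A}_n(C)$ is the first homology of the Fox chain complex $R_n^{k}\xrightarrow{\partial_2}R_n^{m}\xrightarrow{\partial_1}R_n$, with $\partial_1(e_j)=x_j-1$ and $\partial_2$ the Fox Jacobian $J=\bigl(\overline{\partial r_i/\partial x_j}\bigr)$, images taken under $\Z G\to\Z\Gamma_n\hookrightarrow R_n$. Since $\Gamma_n$ is infinite, $H_0(U;R_n)=R_n/\operatorname{im}\partial_1$ is $R_n$-torsion, so $\ker\partial_1$ is free of rank $m-1$ over the PID $R_n$; identifying the torsion submodule of $\operatorname{coker}\partial_2$ with that of $\mathcal{A}_n(C)$, one sees that $\delta_n(C)$ is finite precisely when $\operatorname{rk}_{\mathcal{K}_n}J=m-1$, in which case
\[
\delta_n(C)=\text{rk}_{\K_n}\mathcal{A}_n(C)=\deg_t\gcd\bigl\{\,(m-1)\times(m-1)\text{ minors of }J\,\bigr\},
\]
where $\deg_t$ of an element of $\K_n[t^{\pm1}]$ means the difference between its top and bottom $t$-degrees. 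The structural point is that $J$, as a matrix over $\Z G$, does not depend on $n$; only the coefficient map $\Z G\to R_n$ varies. Since $\psi(x_j)=1$, under the splitting identifying $R_n$ with $\K_n[t^{\pm1}]$ each $x_j$ is sent to $t$ times a unit of $\K_n$, so a word in the $x_j^{\pm1}$ of $\psi$-span $d$, and each of its Fox derivatives, has image of $t$-span $\le d$, uniformly in $n$.

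The qualitative input needed is that $\operatorname{rk}_{\mathcal{K}_n}J=m-1$, i.e.\ that the $n$-th order rank $r_n(C)=\operatorname{rk}_{\mathcal{K}_n}H_1(U;\mathcal{K}_n)$ vanishes -- equivalently, that $\mathcal{A}^{\Z}_n(C)$ is a torsion $\Z\Gamma_n$-module. For this I would use the results of \cite{Harvey} on higher-order ranks in a PTFA tower to reduce to the level $n=0$, where $r_0(C)$ is the rank of the (multivariable) Alexander module of $C$; for $\overline C$ transversal to $L_\infty$ this module is torsion by Libgober's computations (cf.\ \cite{Li}), so $r_0(C)=0$, hence $r_n(C)=0$ for all $n$. (Alternatively, one can try to check directly that the braid-monodromy relations of a generic pencil already pin $\operatorname{rk}_{\mathcal{K}_n}J$ to $m-1$.)

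It then remains to bound the $t$-degrees. A maximal minor of $J$ is a signed sum of products of $m-1$ entries $\overline{\partial r_i/\partial x_j}$, so its $t$-span is at most the largest, over transversals, of the sum of the $t$-spans of the chosen entries, which by the previous paragraph is controlled by $\sum_i|r_i|$, and by the first step this is $O(m^2)$ with an explicit constant. Combining this estimate with the cancellations forced by the Fox identity $\sum_j(\partial r_i/\partial x_j)(x_j-1)=0$ should yield $\deg_t\le m(m-2)$ for every maximal minor, hence $\delta_n(C)\le m(m-2)$ for all $n$. The hard part is concentrated in the first and last steps: one must choose the pencil, and the resulting presentation, so that the braid-monodromy relations are \emph{minimal}, and then carry out the $t$-degree bookkeeping precisely enough to land on the sharp constant $m(m-2)$ rather than a weaker, singularity-dependent bound -- the transversality at infinity being exactly the hypothesis that keeps the estimate uniform in the level $n$.
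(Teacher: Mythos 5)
There is a genuine gap, in fact two, and they sit exactly where you admit the work is concentrated. First, the qualitative step: you need $r_n(C)=\operatorname{rk}_{\cK_n}H_1(U;\cK_n)=0$ for \emph{every} $n$, and you propose to get it by citing ``results of Harvey on higher-order ranks in a PTFA tower'' to reduce to $n=0$ and then invoking Libgober. No such reduction theorem is available off the shelf: Harvey's paper contains no statement of the form ``$r_0=0\Rightarrow r_n=0$ for all $n$,'' and the finiteness of $\delta_n(C)$ at each level $n$ is precisely the nontrivial content here (the present paper stresses that a priori there is no reason to expect finiteness, and its later results, e.g.\ Theorem \ref{thmVanishing} and Lemma \ref{lemRank}, carry the hypothesis ``$\delta_n$ finite'' level by level rather than deducing it from $n=0$). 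If you want this step you must prove a rank-monotonicity statement for the ring surjections $\Z\Gamma_n\to\Z\Gamma_0$ (a Strebel-type lifting of nonsingularity of square matrices), which is a real lemma, not a citation. Second, the quantitative step is missing: after bounding the $t$-span of the Fox derivatives of braid-monodromy relations you say that ``cancellations forced by the Fox identity should yield'' $\deg_t\le m(m-2)$. That is the whole theorem. A transversal-by-transversal estimate on maximal minors of the braid-monodromy Jacobian gives a bound growing with the relation lengths, far above $m(m-2)$, and you identify no mechanism producing the sharp, $n$-independent constant. (There is also a smaller technical slip: over the skew PID $R_n\cong\K_n[t^{\pm1}]$ ``gcd of $(m-1)\times(m-1)$ minors'' is not defined; one must argue via diagonalization or the Dieudonné determinant, as in Harvey's treatment of Fox calculus.)

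For contrast, the argument the paper relies on (from Leidy--Maxim, and reproduced in spirit in the proofs of Lemma \ref{l1} and Theorem \ref{th3}) avoids both issues at once: one takes the boundary $X$ of a tube around the line at infinity, which for a curve in general position at infinity is the complement of the Hopf link on $m$ components; a Zariski--Lefschetz type theorem gives an epimorphism $\pi_1(X)\twoheadrightarrow\pi_1(\C^2\setminus C)$, hence an epimorphism $H_1(X;R_n)\twoheadrightarrow H_1(\C^2\setminus C;R_n)$ of $R_n$-modules; and since $X$ fibers with fiber the Milnor fiber of the cone singularity $x^m=y^m$, one gets $\operatorname{rk}_{\K_n}H_1(X;R_n)\le\mu-1=(m-1)^2-1=m(m-2)$ uniformly in $n$. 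This simultaneously gives finiteness (hence the torsion statement for $\cA_n^{\Z}(C)$) and the sharp bound, with no reduction to the abelian level and no degree bookkeeping on braid relations. Your proposal, as written, does not reach either conclusion.
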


One of the goals of this paper is to provide generalizations of Theorem \ref{th1} to various contexts in which the assumption of good behavior at infinity is relaxed.


\section{Complex line arrangements}\label{cla}

Our first result, Theorem \ref{th2} below, generalizes Theorem \ref{th1} to the context of essential complex line arrangements. In Theorem \ref{th4} we study the special class of arrangements containing a line with only nodal singularities.

Assume that all irreducible components of the reduced plane curve $C$ are complex lines, i.e., the defining polynomial $f=\prod_{i=1}^m \ell_i$ factorizes into a product of linear forms $\ell_i:\C^2 \to \C$, $i=1,\ldots,m$. Let $$L_i:=\ker (\ell_i),$$ and let $$\cA:=\{L_1,\ldots, L_m\}\subset \C^2$$ be the corresponding complex line arrangement, with complement $U$. As before, we will use the notation $\delta_n(C)$ for the higher-order degrees of the complement $U:=\C^2 \setminus C=\C^2 \setminus \cA$.


\subsection{Upper bounds on higher-order degrees}\label{clab}
In this section, we prove the following generalization of Theorem \ref{th1} to the context of essential complex line arrangements.
\begin{thm}\label{th2}  Assume that the complex affine plane curve $C$ defines an essential line arrangement  $\cA=\{L_1,\ldots, L_m\}\subset \C^2$  (that is, not all lines in $\cA$ are parallel).
Then,
$$
\delta_n(C)\leq m(m-2), \text{\ \ for all }n\text{.}
$$
Moreover, the equality holds only in the case where $\cA$ consists of $m$ lines going through one point, and in that case the equality holds for all $n\geq 0$.
\label{thmbound}
\end{thm}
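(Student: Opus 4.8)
The plan is to translate the statement into an explicit Fox‑calculus computation on a Zariski--van Kampen presentation of $G=\pi_1(\C^2\setminus\cA)$, run that computation on the extremal configuration, and then bound it in general.

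\emph{Trivial reductions.} For $m=1$ there is nothing to prove, and for $m=2$ an essential arrangement is a pair of lines meeting at a point, so $U\cong\C^*\times\C^*$, $G\cong\Z^2$, all $G_r^{(i)}$ with $i\geq1$ are trivial, hence the higher‑order Alexander modules vanish and $\delta_n(C)=0=m(m-2)$ for all $n$; as this is exactly the ``$m$ concurrent lines'' case, the equality clause holds too. So assume $m\geq3$.

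\emph{Homological set‑up.} After a generic affine change of coordinates (no line of $\cA$ vertical), the Zariski--van Kampen method presents $G$ with $m$ meridian generators $x_1,\dots,x_m$ and, for each multiple point $P$ of multiplicity $n_P$, exactly $n_P-1$ braid‑monodromy relations, so $G$ has a presentation with $m$ generators and $b_2:=\sum_P(n_P-1)$ relators. Fox differentiation yields a complex of free $R_n$‑modules $R_n^{b_2}\xrightarrow{\partial_2}R_n^{m}\xrightarrow{\partial_1}R_n$ computing $H_*(U;R_n)$, with $\partial_1$ the column $(x_i-1)_i$ read in $R_n$ via $G\to\Gamma_n\into R_n$. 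Since $C$ has at least two components, $\mu_i\mu_j^{-1}\in\bar\Gamma_n\setminus\{1\}$ for $i\neq j$, hence $\mu_i-\mu_j$ is a unit of $R_n$; therefore $\partial_1$ is onto, $H_0(U;R_n)=0$, and $\ker\partial_1$ is a free direct summand of $R_n^m$ of rank $m-1$. Consequently $\delta_n(C)=\mathrm{rk}_{\K_n}H_1(U;R_n)$ is finite iff $\partial_2$ has rank $m-1$, and then $\delta_n(C)$ is the $t$‑degree of the order of the torsion module $\ker\partial_1/\mathrm{im}\,\partial_2$, i.e.\ (because $\ker\partial_1$ has unit maximal minors) the $t$‑degree of a gcd of the $(m-1)\times(m-1)$ minors of the $(m-1)\times b_2$ matrix $\partial_2$ --- with the proviso that for $n\geq1$ ``minors'' must be read via ranks over the skew field $\K_n$, and that the answer does not depend on the splitting $R_n\cong\K_n[t^{\pm1}]$.

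\emph{The extremal case and the bound.} When $\cA$ is $m$ lines through one point, $G=F_{m-1}\times\Z$ with $w:=x_1\cdots x_m$ central; writing $u_j:=x_1\cdots x_{j-1}$ one gets $\partial[w,x_i]/\partial x_j=u_j(1-x_i)+\delta_{ij}(w-1)$, and after passing to $R_n$ --- where the image $\omega$ of $w$ is central with $\bar\psi(\omega)=m$, so $\omega-1$ has $t$‑degree $m$ --- each $(m-1)\times(m-1)$ minor factors as $(\text{unit})\cdot(\omega-1)^{m-2}\cdot(\xi_k-1)$; the elements $\xi_k-1$ being pairwise coprime, the gcd is $(\omega-1)^{m-2}$ up to units, giving $\delta_n(C)=m(m-2)$ for every $n\geq0$, which is the equality case. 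For a general essential $\cA$, the plan is to bound, uniformly in $n$, the $t$‑degrees of the minors of the braid‑monodromy Fox Jacobian by $m(m-2)$ (plus a correction absorbed on passing to the gcd over the choice of deleted column), so $\delta_n(C)\leq m(m-2)$, and then to trace through equality, whose combinatorial upshot should be that the intersection lattice is that of a pencil. An alternative to attacking arbitrary lattices head‑on is an induction on $m$ that deletes a line $L_m$ and compares $M(\cA)$ with $M(\cA\setminus\{L_m\})$ via the Gysin sequence of the open inclusion (with ``boundary'' $L_m$ minus its intersection points), the base cases being handled by Theorem~\ref{th1} and the $m=2$ case.

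\emph{Main obstacle.} The delicate point is the general case. Braid‑monodromy presentations of arrangements with several multiple points are far less transparent than the pencil presentation, so one needs either a workable normal form (a wiring‑diagram/Arvola‑type presentation) giving sufficient control on the Fox Jacobian, or a deletion argument that behaves well for the twisted coefficients $R_n$ --- which is nontrivial precisely because $\pi_1(M(\cA\setminus\{L_m\}))$ is a \emph{quotient}, not a subgroup, of $\pi_1(M(\cA))$, so coefficient systems do not pull back in the obvious direction. Superimposed on this is the noncommutativity of $R_n$ for $n\geq1$, which is what makes the uniformity in $n$ --- and the sharp identification of the equality case with the concurrent configuration --- the genuine content of the theorem.
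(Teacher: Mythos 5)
Your write-up is not a proof of the theorem: the only cases you actually establish are the trivial ones ($m\leq 2$) and the pencil, where the computation $\delta_n(C)=m(m-2)$ is plausible (and is indeed known, cf.\ the Fox-calculus computation cited in the paper from \cite{Suky}). The entire general case --- the uniform bound $\delta_n(C)\leq m(m-2)$ for an arbitrary essential arrangement and the strict inequality for non-pencils, which is the actual content of the statement --- is left as ``the plan is to bound the $t$-degrees of the minors of the braid-monodromy Fox Jacobian,'' with no argument supplied, and you yourself flag the two obstacles that make this plan non-routine: there is no workable normal form for the braid-monodromy relators of an arrangement with several multiple points that gives control on the Fox Jacobian, and for $n\geq 1$ the ring $R_n\cong\K_n[t^{\pm1}]$ is noncommutative, so ``gcd of $(m-1)\times(m-1)$ minors'' is not literally available and any degree count must be rephrased in terms of ranks or Dieudonn\'e-type determinants. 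The alternative deletion induction you mention runs into exactly the problem you name (coefficient systems do not transfer along the quotient $\pi_1(M(\cA))\twoheadrightarrow\pi_1(M(\cA\setminus\{L_m\}))$ in the needed direction), and you do not resolve it. So there is a genuine gap: the key estimate is asserted as a goal, not proved.

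For comparison, the paper avoids the global presentation entirely by localizing along one line. If some $L_1$ has no line parallel to it, one takes the boundary $X$ of a tubular neighborhood of $L_1$, decomposed by Mayer--Vietoris into $N=F\times S^1$ and the local link complements $S^3_i\setminus K_i$ of the multiple points on $L_1$ (Hopf links on $d_i$ components). The $\Gamma_n$-covers of all these pieces factor through infinite cyclic covers, so their $R_n$-homology reduces to Milnor-fiber data and Euler characteristics, giving $\mathrm{rk}_{\K_n}H_1(X;R_n)=\sum_i(d_i-1)^2-1$ independently of $n$; a Zariski--Lefschetz argument shows $\pi_1(X)\twoheadrightarrow\pi_1(\C^2\setminus C)$, hence $\delta_n(C)\leq\sum_i(d_i-1)^2-1$, and the elementary combinatorial facts $\sum_i(d_i-1)=m-1$, $d_i-1\leq m-1$ give the bound and pin down equality to the pencil. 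When every line has a parallel partner, a second lemma runs the same computation around the projective closure of a line, with the extra local piece at infinity (a Hopf link on $k+1$ components, $k\geq2$) handled by an explicit Fox-calculus computation, yielding the strictly smaller bound $(m-2)(m-1)-1$. This is precisely the mechanism that delivers the uniformity in $n$ that your approach identifies as the main difficulty but does not achieve.
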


At the opposite spectrum (i.e., if the essentiality assumption is dropped), we have the following:
\begin{prop}\label{par} If the plane curve $C$ defines a line arrangement $\cA=\{L_1,\ldots, L_m\}\subset \C^2$ consisting of $m$ distinct parallel lines, then:
$$
\delta_n(C)=
\begin{cases}
\infty, & m>1, \\
0, & m=1,
\end{cases}
$$ 
for all $n \geq 0$.
\end{prop}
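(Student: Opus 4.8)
The plan is to handle the case of $m$ distinct parallel lines directly by describing the complement explicitly and computing its first homology with coefficients in the relevant skew-Laurent ring. Up to an affine change of coordinates in $\C^2$ we may assume all lines are vertical, say $L_i = \{x = a_i\}$ for distinct $a_1,\ldots,a_m \in \C$. Then the complement $U = \C^2 \setminus \cA$ fibers over $\C \setminus \{a_1,\ldots,a_m\}$ via the projection $(x,y) \mapsto x$, and this projection is a trivial fiber bundle with fiber $\C$ (the second coordinate is unconstrained). Hence $U \simeq (\C \setminus \{a_1,\ldots,a_m\}) \times \C$ is homotopy equivalent to a wedge of $m$ circles, and $G = \pi_1(U)$ is free of rank $m$, with the $m$ generators being the meridians $\mu_1,\ldots,\mu_m$ about the respective lines. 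Note that here the linking number homomorphism $\psi\colon G \to \Z$ sends every $\mu_i$ to $1$.

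The case $m=1$ is immediate: then $G \cong \Z$ is abelian, so $G_r^{(1)} = G' = 1$, whence $\mathcal{A}_n(C) = 0$ and $\delta_n(C) = 0$ for all $n$. For $m > 1$, the strategy is to show $\mathcal{A}_n(C)$ has infinite rank over $\K_n$, equivalently (by the Remark in the text) that $\operatorname{rk}_{\cK_n} H_1(U;\cK_n) \neq 0$. The cleanest route is an Euler characteristic computation: since $U \simeq \bigvee_m S^1$ has the homotopy type of a finite CW complex with $\chi(U) = 1 - m$, and since for any PTFA quotient $\Gamma$ of $G$ with skew-field of fractions $\cK$ one has $\sum_i (-1)^i \operatorname{rk}_{\cK} H_i(U;\cK) = \chi(U)$, we get $\operatorname{rk}_{\cK_n} H_0(U;\cK_n) - \operatorname{rk}_{\cK_n} H_1(U;\cK_n) = 1-m$. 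Because $m > 1$, the group $\Gamma_n$ is infinite (it surjects onto $\Z$ via $\psi$ and is nontrivial), so $H_0(U;\cK_n) = \cK_n \otimes_{\Z\Gamma_n}\Z = 0$, forcing $\operatorname{rk}_{\cK_n} H_1(U;\cK_n) = m-1 > 0$. Therefore $\mathcal{A}_n(C)$ is not a torsion $R_n$-module, and $\delta_n(C) = \operatorname{rk}_{\K_n}\mathcal{A}_n(C) = \infty$.

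Alternatively — and this gives the same conclusion with an explicit module — one can compute $\mathcal{A}_n^\phi(C) = H_1(U;\K_n[t^{\pm 1}])$ via Fox calculus: from the free presentation $\langle \mu_1,\ldots,\mu_m \mid \, \rangle$ the chain complex of the relevant cover is concentrated in degrees $0$ and $1$, with $\partial_1 \colon (\K_n[t^{\pm1}])^m \to \K_n[t^{\pm1}]$ given by the column vector whose $i$-th entry is $\mu_i - 1$, i.e., $t-1$ after applying the map to $\K_n[t^{\pm1}]$. Thus $\mathcal{A}_n^\phi(C) = \ker \partial_1$ is a free $\K_n[t^{\pm1}]$-module of rank $m-1$, and its $\K_n$-rank is infinite for $m > 1$ and zero for $m = 1$, matching the Euler characteristic count.

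The main obstacle is essentially bookkeeping rather than a genuine difficulty: one must be careful that the homotopy equivalence $U \simeq (\C \setminus \{a_1,\ldots,a_m\})\times \C$ is compatible with the linking number homomorphism and with the coefficient systems $\Z\Gamma_n$, so that the (co)homology-with-local-coefficients computation and the Euler characteristic identity genuinely apply; this is routine since $\C$ is contractible, so the projection induces an isomorphism on $\pi_1$ intertwining the two copies of $\psi$. I would present the Euler characteristic argument as the main proof and mention the Fox calculus computation as an explicit alternative.
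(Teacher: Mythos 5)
Your argument is correct and takes essentially the paper's route: both proofs reduce to the fact that $U$ is homotopy equivalent to a wedge of $m$ circles and then perform a rank count in the resulting two-term chain complex --- the paper observes directly that $H_1(U;R_n)$ is a nonzero free $R_n$-module, while you phrase the same count as an Euler characteristic computation over $\cK_n$ combined with the remark that $\delta_n(C)<\infty$ if and only if $\operatorname{rk}_{\cK_n}H_1(U;\cK_n)=0$. One harmless slip in your optional Fox-calculus aside: for $m>1$ the meridians $\mu_i$ have distinct images in $\Gamma_n$ (already in $\Gamma_0\cong\Z^m$), so the entries of $\partial_1$ are of the form $u_i t-1$ with $u_i\in\bar{\Gamma}_n\subset\K_n$ units (not all equal to $t-1$); this does not affect the conclusion, since the kernel of a nonzero map $(\K_n[t^{\pm 1}])^m\to\K_n[t^{\pm 1}]$ is still free of rank $m-1$.
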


\begin{proof}
In this case, $\C^2\setminus C$ is homotopy equivalent to a wedge sum of $m$ circles. If $m=1$, we have that $\pi_1(\C^2\setminus C)\cong \Z$, so it is abelian. It then follows from (\ref{eq2}) that
$$
\delta_n(C)=0, \text{ for all }n\geq 0.
$$

Suppose now that $m>1$. The chain complex computing $H_*(\C^2\setminus C;R_n)$ looks like
$$
\cdots\rightarrow 0 \rightarrow (R_n)^m \rightarrow  R_n\rightarrow 0
$$
Hence, $H_1(\C^2\setminus C;R_n)$ is a non-zero free right $R_n$-module, so
$$
\delta_n(C)=\infty, \text{ for all } n\geq 0.
$$
\end{proof}

Theorem \ref{th2} is a consequence of the following two preparatory lemmas (Lemma \ref{l1} and Lemma \ref{l2}). In Lemma \ref{l1} we consider the case when there is a line in the arrangement which has no singularities at infinity, whereas in Lemma \ref{l2} every line is assumed to have singularities at infinity.

\begin{lem}\label{l1}
In the notations of Theorem \ref{th2}, assume that there exists a line in $\cA$ such that no other line in $\cA$ is parallel to it. Then,
$$
\delta_n(C)\leq m(m-2), \text{\ \ for all }n\text{.}
$$
Moreover, the equality is achieved only in the case when $C$ consists of $m$ lines going through one point, and in that case the equality holds for all $n\geq 0$.
\end{lem}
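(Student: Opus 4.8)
The plan is to reduce the computation of $H_1(U;R_n)$ to a manageable chain complex by exploiting the fact that, in an essential line arrangement with a line $L_{i_0}$ meeting every other line, the complement admits a good fibration-type structure, and then bound the rank of the first homology of that complex. First I would set up the affine coordinates so that the distinguished line $L_{i_0}$ (no line parallel to it) becomes, say, $\{x=0\}$ after an affine change of coordinates; then the projection $p\colon U \to \C^*$ onto the first coordinate, restricted to $U = \C^2\setminus\cA$, is a locally trivial fibration over $\C^*$ minus the finitely many critical values coming from the intersection points of the other $m-1$ lines, with fiber a line ($\cong\C$) minus a finite set of points. Actually, the cleaner approach — and the one I expect the authors use — is to build a CW or handle model of $U$ with one $0$-cell, $m$ $1$-cells (meridians $x_1,\dots,x_m$ of the lines), and a controlled number of $2$-cells coming from the braid monodromy / Zariski–van Kampen relations of the arrangement. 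The number of $2$-cells is $\sum_{P}(\mu_P - 1)$ where the sum is over the intersection points $P$ of $\cA$ and $\mu_P$ is the number of lines through $P$ (this is the well-known count for line arrangement presentations).

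With such a presentation $\langle x_1,\dots,x_m \mid r_1,\dots,r_k\rangle$ of $G$, the chain complex of $U$ with $R_n$-coefficients (after passing to the universal cover and tensoring) takes the form
$$
R_n^{k} \xrightarrow{\ \partial_2\ } R_n^{m} \xrightarrow{\ \partial_1\ } R_n,
$$
where $\partial_2$ is the Jacobian of Fox derivatives of the relators and $\partial_1$ sends the $i$-th generator to $t^{\ell_i}-1$ for the appropriate linking exponent $\ell_i$. The key point: since $\cA$ is essential, some meridian $x_j$ has nonzero image under $\bar\psi$-composed-with-the-exponent map in a way that makes $\partial_1$ surjective onto a rank-$1$ free submodule, so $\operatorname{rk}_{\K_n}\operatorname{im}\partial_1 = 1$ and $\operatorname{rk}_{\K_n}\ker\partial_1 = m-1$. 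Then
$$
\delta_n(C) = \operatorname{rk}_{\K_n} H_1(U;R_n) = (m-1) - \operatorname{rk}_{\K_n}\operatorname{im}\partial_2.
$$
Since $R_n$ is a PID (as recalled in the excerpt, $R_n \cong \K_n[t^{\pm1}]$), ranks behave well, and it remains to show $\operatorname{rk}_{\K_n}\operatorname{im}\partial_2 \geq (m-1) - m(m-2)$, equivalently that the nullity of $\partial_2$ is at most $m(m-2)$ plus a correction — but more directly I would bound $\delta_n(C)$ from above by exhibiting enough independent relations. The presence of the line $L_{i_0}$ transversal-ish to everything gives, via the local pencil structure at the point(s) where the other lines are concentrated, a large block of relations whose Fox Jacobian is explicitly computable and generically full rank over $\K_n$; counting shows the image has corank at most $m(m-2)$ in $R_n^{m-1}$.

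For the equality statement: if $\cA$ is $m$ lines through a single point $P$, then $U$ is homotopy equivalent to the complement of $m$ points in $\C^*$ times ... actually $\C^2$ minus $m$ concurrent lines deformation retracts to (the cone structure gives) $S^1$ times the complement of $m$ points in $\C$, i.e., $U \simeq \C^* \times (\C\setminus\{m-1 \text{ points}\})$; here $\pi_1$ is $\Z \times F_{m-1}$, and one computes $H_1(U;R_n)$ directly — the $R_n$-chain complex is that of $F_{m-1}$ tensored appropriately, yielding $\delta_n(C) = (m-1)\cdot(\text{something})$ that evaluates to exactly $m(m-2) = m^2-2m$; here $(m-1)-1 = m-2$ is the first Betti number of the fiber-type piece and multiplying by the relevant factor gives $m(m-2)$. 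One checks this is independent of $n$. Conversely, I would argue that if the lines are not all concurrent, then there are \emph{extra} intersection points, hence extra relations, strictly dropping the rank below $m(m-2)$; this is where one must be careful — the extra relations must be shown to genuinely cut down the $\K_n$-rank and not be consequences of the others, which is the main obstacle. I expect this uses a monodromy argument: the braid monodromy around a second independent singular point is nontrivial on homology of the fiber of the universal abelian (or $\Gamma_n$-) cover, so the corresponding Fox-derivative relations are $\K_n$-independent from the first block.

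**Expected main obstacle.** The genuinely hard part is \emph{not} the upper bound $\delta_n \le m(m-2)$ — that should follow from Euler-characteristic bookkeeping on the $R_n$-chain complex ($R_n$ being a PID makes rank additivity available, and $\chi(U)$ together with the known ranks in degrees $0$ and $2$ pins things down). The hard part is the sharpness/equality claim: proving that equality \emph{forces} concurrence, and that in the concurrent case equality genuinely holds \emph{for every $n$} rather than just $n=0$. For this I would need to: (i) compute $\delta_n$ exactly for the concurrent pencil, using that its fundamental group is $\Z\times F_{m-1}$ and that higher-order degrees of a product $\Z \times F$ with $F$ free can be computed via a Künneth / change-of-rings argument over the skew field $\K_n$ (showing the answer is $n$-independent); and (ii) in the non-concurrent case, locate a second singular point and show its braid-monodromy contribution strictly increases $\operatorname{rk}_{\K_n}\partial_2$. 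Step (ii) is where the argument is most delicate, because one must rule out the possibility that the new relations are redundant over the skew field $\K_n$ — this likely requires tracking the linking-number grading and using that the relevant Alexander-type matrix has a nonzero maximal minor after specialization.
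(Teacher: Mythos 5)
Your central computational step does not work. You claim that from the Fox--calculus chain complex $R_n^k\xrightarrow{\partial_2}R_n^m\xrightarrow{\partial_1}R_n$ one gets $\delta_n(C)=(m-1)-\operatorname{rk}_{\K_n}\operatorname{im}\partial_2$, and that the upper bound is then ``Euler-characteristic bookkeeping.'' This conflates two different ranks. The modules in that complex are free over $R_n\cong\K_n[t^{\pm1}]$, and $R_n$ has \emph{infinite} dimension over $\K_n$; the invariant $\delta_n(C)$ is the $\K_n$-dimension of the torsion $R_n$-module $H_1(U;R_n)$, i.e.\ essentially the $t$-degree of its order, not a quantity bounded by the number of generators. (A single relation $p(t)$ on one generator already gives a module $R_n/(p)$ of $\K_n$-rank $\deg p$, which can far exceed $m-1$; indeed the bound you must prove, $m(m-2)$, exceeds $m-1$.) So rank additivity over $\K_n$ applied to the free $R_n$-modules gives nothing, and your subsequent assertion that ``counting shows the image has corank at most $m(m-2)$ in $R_n^{m-1}$'' is exactly the quantitative content of the lemma and is left unproved. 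Your assessment of where the difficulty lies is therefore inverted: the upper bound is the real work, and the degree-type estimate has to come from somewhere geometric or from explicit control of the Alexander matrix, neither of which your sketch supplies. The equality discussion is similarly only gestured at (the pencil computation ``evaluates to exactly $m(m-2)$'' is not derived, and the claimed independence of braid-monodromy relations over $\K_n$ is precisely the kind of statement that needs proof).

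For comparison, the paper's proof avoids presentations of $\pi_1(U)$ altogether for the bound. It takes the boundary $X$ of a tubular neighbourhood of the distinguished line $L_1$, decomposed as $N\cup\bigsqcup_i(S^3_i\setminus K_i)$ along tori, and computes $\operatorname{rk}_{\K_n}H_1(X;R_n)$ by Mayer--Vietoris: each local link complement has the $\K_n$-homology of the Milnor fibre of an ordinary $d_i$-fold point (a wedge of $(d_i-1)^2$ circles), $N$ and the tori contribute via their infinite cyclic covers, and Euler-characteristic arguments over the skew field $\K_n$ (where they \emph{are} valid, because these local pieces have finite-dimensional $\K_n$-homology) give $\operatorname{rk}_{\K_n}H_1(X;R_n)=\sum_i(d_i-1)^2-1$. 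A Zariski--Lefschetz argument shows $\pi_1(X)\twoheadrightarrow\pi_1(U)$, hence $\delta_n(C)\le\sum_i(d_i-1)^2-1$, and then the combinatorial facts $\sum_i(d_i-1)=m-1$, $d_i-1\le m-1$ yield both the bound $m(m-2)$ and the characterization of equality (in the concurrent case $X$ is a deformation retract of $U$, so equality holds for all $n$) with no further monodromy or independence argument needed. If you want to salvage your approach, you would have to replace the rank bookkeeping with an actual degree bound on the maximal minors of the Fox Jacobian over $\K_n[t^{\pm1}]$, which is substantially harder than the paper's route.
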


\begin{proof}
Reordering, we can assume that $L_1$ is not parallel to any other line in $\cA$. Let $P_1,\ldots,P_r$ be the singular points of $C$ in $L_1$. Let $$F=L_1\setminus \bigsqcup_{i=1}^r (L_1\cap \B_i^4)$$ be the (real) surface obtained by removing small balls $\B_i^4\subset \C^2$ around the singular points $P_i$. Hence $F$ is obtained from $L_1$ by removing a $2$-dimensional open disk $D_i$ around every singular point $P_i$.

Let $$N=F\times S^1.$$ Here $N$ should be thought of as the boundary of a tubular neighborhood around the non-singular part of $L_1$. We have that $\partial N=\partial F\times S^1$, and since $\partial F$ is a union of disjoint $S^1$'s (one from every disk $D_i$ removed), then $\partial N$ is a union of disjoint tori $\bigsqcup\limits_{i=1}^r T_i$ (again, one from every disk $D_i$ removed). Let us fix a point $Q_i$ in the circle $S^1$ corresponding to the boundary of the disk $D_i$ for every such disk removed.

Let $d_i$ be the number of lines in $\cA$ going through the singular point $P_i$, $i=1,\ldots,r$. Let $K_i$ be the link of the singularity at the point $P_i$ (hence $K_i$ is a Hopf link with $d_i$ components), and let $S^3_i$ be the boundary of $\B_i^4$. We consider the space
$$
X=N\cup_{\left(\bigsqcup\limits_{i=1}^r T_i\right)} \left(\bigsqcup_{i=1}^r S^3_i\setminus K_i\right)\subset \C^2\setminus C
$$
where the gluing is done as follows: A meridian around the component of $K_i$ corresponding to the line $L_1$ is glued to $\{Q_i\}\times S^1\subset N$, and the component of $K_i$ corresponding to $L_1$ is glued to the $S^1$ corresponding to the boundary of $D_i$.

The homology of the space $X$ with $R_n$-coefficients can be computed from the corresponding Mayer-Vietoris sequence:
\begin{equation}
\begin{array}{c}
\cdots \rightarrow H_2(N;R_n)\oplus \left(\bigoplus\limits_{i=1}^r H_2(S^3_i\backslash K_i;R_n)\right) \rightarrow H_2(X;R_n)\xrightarrow{\alpha} \\
\xrightarrow{\alpha}  \bigoplus\limits_{i=1}^r H_1(T_i;R_n)\rightarrow H_1(N;R_n)\oplus \left(\bigoplus\limits_{i=1}^r H_1(S^3_i\backslash K_i;R_n)\right)\rightarrow H_1(X;R_n)\rightarrow\\
\rightarrow \bigoplus\limits_{i=1}^r H_0(T_i;R_n)\rightarrow H_0(N;R_n)\oplus \left(\bigoplus\limits_{i=1}^r H_0(S^3_i\backslash K_i;R_n)\right)\rightarrow H_0(X;R_n)\rightarrow 0
\end{array}
\label{eqnMVseq}
\end{equation}

Hence, using the additivity of the rank of $\K_n$-modules (recall that $\K_n$ is the right ring of quotients of the Ore domain $\Z\bar{\Gamma}_n$), we have that:
\begin{equation}
\begin{array}{r}
\text{rk}_{\K_n} H_1(X; R_n)=\text{rk}_{\K_n} H_1(N;R_n)+\sum\limits_{i=1}^r \text{rk}_{\K_n} H_1(S^3_i\backslash K_i;R_n)-\sum\limits_{i=1}^r \text{rk}_{\K_n} H_1(T_i;R_n)+\\
+\text{rk}_{\K_n} \Im(\alpha)+\sum\limits_{i=1}^r \text{rk}_{\K_n} H_0(T_i;R_n)-\text{rk}_{\K_n} H_0(N;R_n)-\\
-\sum\limits_{i=1}^r \text{rk}_{\K_n} H_0(S^3_i\backslash K_i;R_n)+\text{rk}_{\K_n} H_0(X; R_n).
\end{array}
\label{eqnMV}
\end{equation}


Abusing notation, for any $i=1,\ldots,r$ we denote by $\psi:\pi_1(S^3_i\backslash K_i)\longrightarrow \Z$ the (local) linking number homomorphism induced by $\psi:\pi_1(\C^2\backslash C)\longrightarrow \Z$. Then the infinite cyclic cover of $S^3_i\backslash K_i$ induced by the homomorphism $\psi$ is homeomorphic to $F_i\times \R$, where $F_i$ is the Milnor fiber corresponding to the singular point $P_i$. The $\Gamma_n$-cover of $S^3_i\backslash K_i$ factors through this infinite cyclic cover, so we have the following isomorphism of $\K_n$-modules (e.g., see \cite[Section 2.1]{kirk})
\begin{equation}\label{eq3}
H_j(S^3_i\backslash K_i;R_n)\cong H_j(F_i;\K_n), \text{\ \ \ for all }j\geq 0.
\end{equation}
The Milnor fiber $F_i$ has the homotopy type of a wedge sum of $\mu_i$ circles, where $\mu_i$ is the Milnor number associated to the singular point $P_i$. Together with (\ref{eq3}), this yields that $H_2(S^3_i\backslash K_i;R_n)=0.$ Moreover, since the singularity $P_i$ consists of the intersection of 
$d_i$ lines, one has  
$$
\mu_i=(d_i-1)^2
$$
and hence, since the Euler characteristic with coefficients on a $1$-dimensional local system over a skew field does not depend on the local system, we have that
\begin{equation}
\begin{array}{c}
\text{rk}_{\K_n} H_2(S^3_i\backslash K_i;R_n)=0\\
\text{rk}_{\K_n} H_0(S^3_i\backslash K_i;R_n)-\text{rk}_{\K_n} H_1(S^3_i\backslash K_i;R_n)=\chi(F_i)=1-(d_i-1)^2,\\
\end{array}
\label{eqnlink}
\end{equation}
for all $i=1,\ldots, r$.

Similarly, abusing notation again, we denote by $\psi:\pi_1(N)\longrightarrow \Z$ the homomorphism induced by the linking number homomorphism $\psi:\pi_1(\C^2\backslash C)\longrightarrow \Z$. Recall that $N=F\times S^1$, and $F$ is homotopy equivalent to a wedge sum of $r$ circles. From this, we  see that the infinite cyclic cover of $N$ associated to $\psi$ is homeomorphic to $F\times \R$, so it is homotopy equivalent to $F$. Since the $\Gamma_n$-cover of $N$ factors through this infinite cyclic cover, we get as before that
$$
H_j(N;R_n)\cong H_j(F;\K_n), \text{\ \ \ for all }j\geq 0, 
$$
and hence, we have that
\begin{equation}
\begin{array}{c}
\text{rk}_{\K_n} H_2(N;R_n)=0,\\
\text{rk}_{\K_n} H_0(N;R_n)-\text{rk}_{\K_n} H_1(N;R_n)=\chi(F)=1-r.\\
\end{array}
\label{eqnN1}
\end{equation}

Similarly,  the $\Gamma_n$-cover of the torus $T_i$ factors through the infinite cyclic cover of $T_i$ corresponding to the homomorphism induced by the linking number homomorphism $\psi$, and this infinite cyclic cover is homeomorphic to $S^1\times \R$, hence homotopy equivalent to $S^1$. Consequently, we have that
\begin{equation}
H_j(T_i;R_n)\cong H_j(S^1;\K_n), \text{\ \ \ for all }j\geq 0, i=1,\ldots,r, 
\label{eqnS1}
\end{equation}
and hence
\begin{equation}
\text{rk}_{\K_n} H_0(T_i;R_n)-\text{rk}_{\K_n} H_1(T_i;R_n)=\chi(S^1)=0
\label{eqnT}
\end{equation}
for all $i=1,\ldots,r$.

Note that the above calculation (more precisely, the vanishing of the second homology of $N$ and $S^3_i\backslash K_i$) also implies that that the map $\alpha$ in (\ref{eqnMVseq}) is injective. Thus,
$$
\text{rk}_{\K_n}\Im(\alpha)=\text{rk}_{\K_n} H_2(X; R_n).
$$
Since $X$ has the homotopy type of a $2$-dimensional CW-complex (this can be seen from the way $X$ is constructed), we have that $H_2(X; R_n)$ is a free (right) $R_n$-module. Thus, $\text{rk}_{\K_n} H_2(X; R_n)$ is either $0$ or infinite. But
$$
\text{rk}_{\K_n}\Im(\alpha) \leq \sum\limits_{i=1}^r\text{rk}_{\K_n}H_1(T_i;R_n),
$$
and the right hand side of this inequality is a finite number (by (\ref{eqnS1})). Thus,
\begin{equation}
\text{rk}_{\K_n}\Im(\alpha)=0.
\label{eqnker}
\end{equation}

Finally, we show that
\begin{equation}
\text{rk}_{\K_n}H_0(X;R_n)=0
\label{eqnZero}
\end{equation}
by using Fox Calculus (e.g., see \cite[Section 6]{Harvey}). Since $\cA$ is an essential line arrangement, we have that $m\geq 2$. Let $\gamma_1, \ldots, \gamma_m\in \pi_1(X)$ be positively oriented meridians around $L_1,\ldots, L_m,$ respectively.
We fix a presentation of $\pi_1(X)$ with $\{\gamma_1, \ldots, \gamma_m\}$ as the first $m$ generators. The complex of right $R_n$-modules that computes $H_1(X;R_n)$ using this fixed presentation is
$$
\cdots\xrightarrow{\partial_2} (R_n)^l\xrightarrow{\partial_1} R_n\xrightarrow{\partial_0} 0,
$$
where $l\geq m$, and $\partial_1$ is given by the row matrix $\overline{A}$, with
$$
A=\left(\begin{array}{ccccc}
\gamma_1-1\ \ \   & \gamma_2-1\ \ \   & \cdots\ \ \   & \gamma_m-1\ \ \ & \cdots\ \ \  
\end{array}\right).
$$
Here $\overline{A}$ denotes the matrix obtained from $A$ by taking the involution $\overline{\ \cdot\ }$ of all of its entries, and the involution in $\Z[\Gamma_n]$ is given by
$$
\overline{\sum_{\lambda} n_\lambda g_\lambda}=\sum\limits_{\lambda} n_\lambda g_{\lambda}^{-1}.
$$
(The involution is needed here since we are dealing with a complex of right $R_n$-modules, as opposed to the usual formulation of Fox Calculus, where one works with left modules.)
Hence, 
$$
\overline A=\left(\begin{array}{ccccc}
\gamma_1^{-1}-1\ \ \   & \gamma_2^{-1}-1\ \ \   & \cdots\ \ \   & \gamma_m^{-1}-1\ \ \ & \cdots\ \ \
\end{array}\right) .
$$
Let $e_1,\ldots e_l$ be the canonical basis in $(R_n)^l$. We have that
$$
\partial_1((e_1-e_2)\gamma_1)=1-\gamma_2^{-1}\gamma_1,
$$
which is a unit in $R_n$ for all $n$, since $\gamma_2^{-1}\gamma_1\in \bar\Gamma_n$ corresponds to a non-zero element in $\Gamma_0=H_1(\C^2\setminus C;\Z)$. Hence $\partial_1$ is surjective, so
$$
\text{rk}_{\K_n}H_0(X;R_n)=0,
$$
as desired.

Substituting  (\ref{eqnlink}), (\ref{eqnN1}), (\ref{eqnT}), (\ref{eqnker}) and (\ref{eqnZero}) in equation (\ref{eqnMV}), we get that
\begin{equation}\label{eqnTube}
\text{rk}_{\K_n}H_1(X;R_n)=\sum\limits_{i=1}^r\left((d_i-1)^2-1\right)+r-1=\sum\limits_{i=1}^r(d_i-1)^2-1.
\end{equation}

\medskip

The next step in our proof is to relate $\text{rk}_{\K_n}H_1(X;R_n)$ to $\delta_n(C)$. 
Since $L_1$ is not parallel to any other line in the arrangement $\cA$,  the inclusion map $X \hookrightarrow \C^2 \setminus C$ induces an epimorphism
$$
\pi_1(X)\longtwoheadrightarrow \pi_1(\C^2\setminus C).
$$
This can be seen as follows. Let $T$ be a tubular neighborhood of $L_1$ in $\C^2$ such that $X$ is a deformation retract of $T\backslash (T\cap C)$. 
Since $L_1$ is not parallel to any other line in the arrangement, there exists a generic line $L$ (a line transversal to every other line in the arrangement) such that all of the intersections with lines in the arrangement happen in the interior of $T$. Thus, the map
$$
\pi_1((\C^2\backslash C)\cap L\cap T)\longrightarrow \pi_1((\C^2\backslash C)\cap L)
$$
induced by inclusion is an epimorphism, as one can see a set of generators of $\pi_1((\C^2\backslash C)\cap L)$ inside of $(\C^2\backslash C)\cap L\cap T$. By a Zariski theorem of Lefschetz type (\cite[Theorem 6.5, Chapter 1]{dimca}), we have that the map induced by inclusion
$$
\pi_1((\C^2\backslash C)\cap L)\longrightarrow\pi_1(\C^2\backslash C)
$$
is an epimorphism. Then the following commutative diagram yields that $\pi_1(X)\longrightarrow \pi_1(\C^2\backslash C)$ is an epimorphism, where all the arrows in the diagram are induced by inclusion maps.

\begin{center}
\begin{tikzcd}
\pi_1(X) \arrow[rd] \arrow[d] &  \\
\pi_1(T\backslash(T\cap C)) \arrow[r] & \pi_1(\C^2\backslash \C)\\
\pi_1((\C^2\backslash C)\cap L\cap T) \arrow[u] \arrow[r, twoheadrightarrow] & \pi_1((\C^2\backslash C)\cap L) \arrow[u, twoheadrightarrow]
\end{tikzcd}
\end{center}

We have thus shown that $\pi_1(X)\longrightarrow \pi_1(\C^2\setminus C)$ is an epimorphism. This implies (as in the proof of \cite[Theorem 4.1]{MaxLeidy}) that there is an $R_n$-module epimorphism
$$H_1(X;R_n) \longtwoheadrightarrow H_1(\C^2\setminus C;R_n),$$
and hence
\begin{equation}\label{eq5}
\delta_n(C)\leq \text{rk}_{\K_n}H_1(X;R_n)= \sum\limits_{i=1}^r(d_i-1)^2-1.
\end{equation}
Since $L_1$ is not parallel to any other line in the arrangement, we have that
$$
\sum\limits_{i=1}^r (d_i-1)=m-1.
$$
Furthermore, 
$$
d_i-1\leq m-1\text{\ \ \ for all }i=1,\ldots,r,
$$
where the equality is only satisfied in the case where $\cA$ consists of $m$ lines going through a single point. Altogether,
\begin{eqnarray*}
  \delta_n(C) &\leq& \sum\limits_{i=1}^r(d_i-1)^2-1
\leq (m-1)\cdot \left(\sum\limits_{i=1}^r(d_i-1)\right)-1\\
&=& (m-1)^2-1=m(m-2),
\end{eqnarray*}
where the second inequality can only be an equality in the case where $\cA$ consists of $m$ lines going through a single point. In fact, if $\cA$ consists of $m$ lines going through a single point, then $X$ is a deformation retract of $\C^2\setminus C$, so in that case the first inequality is also an equality (since (\ref{eq5}) becomes an equality) and $\delta_n(C)=m(m-2)$ for all $n$. (An alternative proof of the fact that $\delta_n(C)=m(m-2)$ in the case when the arrangement consists of $m$ lines passing through a point was given  in \cite{Suky} by using Fox Calculus.)
\end{proof}

\begin{rem}
In concrete examples, one can use (\ref{eq5}) to get a better (combinatorial) upper bound for $\delta_n(C)$. Moreover, if there are several lines in $\cA$ such that no other line in $\cA$ is parallel to them, we can take the tubes around each of those lines to get different bounds for $\delta_n(C)$ similar to (\ref{eq5}), and then take the minimum of all of these bounds.
\end{rem}

\begin{exm}
Consider the line arrangement of $m$ lines given by $m-1$ parallel lines $L_2, \ldots, L_m$ and a line $L_1$ transversal to all of them. In this case, the tube $X$ around $L_1$ is homotopy equivalent to the arrangement complement, so by  (\ref{eqnTube}) we have that
$$
\delta_n(C)=\text{rk}_{\K_n}H_1(X;R_n)=m-2
$$
for all $n$.
\label{exmnearpencil}
\end{exm}

In view of Lemma \ref{l1}, the following result completes the proof of Theorem \ref{th2}.
\begin{lem}\label{l2}
In the notations of Theorem \ref{th2}, assume that for every line in $\cA$ there exists a different line in $\cA$ that is parallel to it. Then,
$$
\delta_n(C)\leq (m-2)(m-1)-1, \text{\ \ for all }n\text{.}
$$
In particular,
$$
\delta_n(C)\leq m(m-2), \text{\ \ for all }n\text{.}
$$
\end{lem}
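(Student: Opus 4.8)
The plan is to adapt the proof of Lemma \ref{l1}. There the crucial geometric input was a line $L_1\in\cA$ met by every other line of $\cA$, so that a tube around $L_1$ carries a generating set of $\pi_1(\C^2\setminus C)$; under the present hypothesis no such line exists, since every line has a parallel. The remedy is to replace the affine tube around a line $L_1$ by an enlarged neighborhood that also contains the point at infinity $P$ in whose direction the parallels of $L_1$ converge. So fix any $L_1\in\cA$ and let $L_1,\ldots,L_k$ be the lines of $\cA$ parallel to $L_1$ (including $L_1$), so $k\ge 2$. The remaining $m-k$ lines are themselves grouped into parallelism classes, each of size at least $2$ by hypothesis, so $m-k\ge 2$; hence $m\ge k+2$, and it is precisely this inequality that yields the sharpened bound. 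Let $P\in L_\infty\subset\C P^2$ be the point at infinity of the direction of $L_1$; then the lines through $P$ lying in $\overline C\cup L_\infty$ are exactly $\overline{L_1},\ldots,\overline{L_k},L_\infty$.

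First I would build the space that controls $\delta_n(C)$. Let $T\subset\C^2$ be a tubular neighborhood of the affine line $L_1$, chosen as in Lemma \ref{l1} so that $T\setminus C$ deformation retracts onto the $2$-complex used there, and let $B_P$ be a small ball around $P$ in $\C P^2$ (small enough to miss $\overline{L_{k+1}},\ldots,\overline{L_m}$), with $T$ taken thin relative to $B_P$. Put $\Omega:=(T\cup B_P)\cap\C^2$ and $X:=\Omega\setminus C\subset\C^2\setminus C$, and cover $X$ by the open sets $X_1:=T\setminus C$ and $X_2:=B_P\setminus(\overline C\cup L_\infty)$. By construction $X_2$ is homotopy equivalent to the complement of the central arrangement of the $k+1$ concurrent lines $\overline{L_1},\ldots,\overline{L_k},L_\infty$, while $X_1\cap X_2=(T\cap B_P)\setminus\overline{L_1}$ is the complement of $\overline{L_1}$ in a tubular neighborhood of a punctured disc around $P$ in $\overline{L_1}$ (no other component of $C$ being present there), hence $X_1\cap X_2\simeq T^2$, a $2$-torus.

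Next I would compute the $\K_n$-ranks feeding into the Mayer--Vietoris sequence of $X=X_1\cup X_2$, using in each case that the $\Gamma_n$-cover factors through the infinite cyclic cover determined by the linking number homomorphism, and that this cover is homotopy equivalent to a finite graph, so the arguments of Lemma \ref{l1} apply (Euler characteristics of rank-one local systems over $\K_n$, and the Fox-calculus vanishing of $H_0$). For $X_1$: by Lemma \ref{l1}, $\text{rk}_{\K_n}H_1(X_1;R_n)=\sum_{i=1}^r(d_i-1)^2-1$, where $P_1,\ldots,P_r$ are the singular points of $C$ on $L_1$ (here $r\ge 1$, as $L_1$ meets the $m-k\ge 2$ lines not parallel to it) and $d_i$ is the number of lines of $\cA$ through $P_i$. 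For $X_2$: the linking number homomorphism sends the meridians of $L_1,\ldots,L_k$ to $1$ and that of $L_\infty$ to $0$, so the associated infinite cyclic cover has fundamental group the kernel of a surjection $F_k\onto\Z/k\Z$, hence by Nielsen--Schreier a free group of rank $k^2-k+1$; since moreover this cover is a $1$-complex, $\text{rk}_{\K_n}H_2(X_2;R_n)=0$, and the $H_0$-vanishing argument of Lemma \ref{l1} together with the Euler characteristic gives $\text{rk}_{\K_n}H_1(X_2;R_n)=k(k-1)$. For $X_1\cap X_2\simeq T^2$: the linking number homomorphism is non-trivial on $\pi_1(T^2)=\Z^2$ (a meridian of $L_1$ maps to $1$), and the generator of its kernel maps in $H_1(\C^2\setminus C;\Z)=\Z^m$ to $[\mathrm{mer}\,L_{k+1}]+\cdots+[\mathrm{mer}\,L_m]-(m-k)[\mathrm{mer}\,L_1]$, which is non-zero since $m>k$; hence the rank-one local system induced on the circle which is the corresponding infinite cyclic cover is non-trivial, so $\text{rk}_{\K_n}H_j(X_1\cap X_2;R_n)=0$ for all $j$. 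The Mayer--Vietoris sequence then yields
$$
\text{rk}_{\K_n}H_1(X;R_n)\;\le\;\text{rk}_{\K_n}H_1(X_1;R_n)+\text{rk}_{\K_n}H_1(X_2;R_n)\;=\;\sum_{i=1}^r(d_i-1)^2-1+k(k-1).
$$

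Finally I would argue, exactly as in Lemma \ref{l1}, that $X\into\C^2\setminus C$ induces an epimorphism of fundamental groups --- and hence, as in \cite[Theorem 4.1]{MaxLeidy}, an epimorphism $H_1(X;R_n)\longtwoheadrightarrow H_1(\C^2\setminus C;R_n)$, so that $\delta_n(C)\le\text{rk}_{\K_n}H_1(X;R_n)$. For the surjectivity on $\pi_1$, choose a generic line $L$ close to $L_1$ in the dual plane: then $L$ meets the lines not parallel to $L_1$ near $L_1$, hence inside $T$, it meets $L_2,\ldots,L_k$ far away, near $P$, hence inside $B_P$, and $(\C^2\setminus C)\cap L\cap\Omega$ is connected and contains meridians generating $\pi_1((\C^2\setminus C)\cap L)$; combined with the Zariski theorem of Lefschetz type \cite[Theorem 6.5, Chapter 1]{dimca} this gives the epimorphism. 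To conclude, note that $L_1$ is not parallel to any of the $m-k$ lines it meets in $\C^2$, so $\sum_{i=1}^r(d_i-1)=m-k$ and $d_i-1\le m-k$ for all $i$, whence $\sum(d_i-1)^2\le(m-k)^2$ and
$$
\delta_n(C)\;\le\;(m-k)^2-1+k(k-1).
$$
Since $(m-1)(m-2)-(m-k)^2-k(k-1)=m(2k-3)-2k^2+k+2\ge(k+2)(2k-3)-2k^2+k+2=2k-4\ge 0$ (using $m\ge k+2$ and $k\ge 2$), this gives $\delta_n(C)\le(m-2)(m-1)-1$; and as $(m-2)(m-1)-1<m(m-2)$, the second inequality of the lemma follows. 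I expect the technical heart of the argument to be the careful setup of $\Omega$ near infinity: checking that $X_1\cap X_2$ really retracts onto a single torus with vanishing $R_n$-homology, that this torus gives a clean Mayer--Vietoris decomposition, and that a suitably generic line close to $L_1$ has its intersections with $\cA$ --- together with the loops needed to generate $\pi_1((\C^2\setminus C)\cap L)$ --- contained in $\Omega\setminus C$.
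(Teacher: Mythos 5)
Your decomposition is essentially the paper's: there one takes a tube around $\overline{L_1}\subset\C P^2$, and the extra piece at the point at infinity is the complement of the Hopf link $K_\infty$ on $k+1$ components, which is exactly your $X_2$; the affine part is handled as in Lemma \ref{l1}. The genuine error is in the local system you put on $X_2$: you assert that the linking number homomorphism sends a meridian of $L_\infty$ to $0$. In fact $\psi(\gamma_\infty)=-m$, because in $H_1(\C^2\setminus C;\Z)$ a meridian of the line at infinity is homologous to minus the sum of the meridians of the $m$ affine lines (the paper uses this explicitly). Consequently the central element $y$ of $\pi_1(X_2)\cong F_k\times\Z$ (the product of the $k+1$ local meridians) satisfies $\psi(y)=k-m$, not $k$; the infinite cyclic cover of $X_2$ corresponds to the kernel of $F_k\onto\Z/(m-k)$, a free group of rank $(m-k)(k-1)+1$, and therefore $\text{rk}_{\K_n}H_1(X_2;R_n)=(k-1)(m-k)$, not $k(k-1)$ --- precisely the value the paper obtains by Fox calculus for $H_1(S^3_\infty\setminus K_\infty;R_n)$. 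Your own analysis of the torus $X_1\cap X_2$, where you correctly find the class $\sum_{j>k}\gamma_j-(m-k)\gamma_1$, already shows that loops near infinity link the non-parallel lines; the meridian of $L_\infty$ is no exception, so your two local computations are mutually inconsistent.

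As written, the intermediate bound $\delta_n(C)\leq(m-k)^2-1+k(k-1)$ is therefore unjustified (and when $m-k>k$ it is strictly smaller than what your Mayer--Vietoris argument actually yields, so it cannot be salvaged as stated). The good news is that with the corrected contribution the proof closes, and more cleanly: one gets $\delta_n(C)\leq\sum_{i=1}^r(d_i-1)^2+(k-1)(m-k)-1\leq(m-k)\left(\sum_{i=1}^r(d_i-1)+k-1\right)-1=(m-k)(m-1)-1\leq(m-2)(m-1)-1$, using only $k\geq2$; in particular the observation $m-k\geq2$, which you needed only to absorb the overcount $k(k-1)$, becomes unnecessary, and you recover exactly the paper's chain of inequalities. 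The remaining ingredients of your argument --- importing the formula $\sum_i(d_i-1)^2-1$ for $\text{rk}_{\K_n}H_1(X_1;R_n)$ from Lemma \ref{l1}, the vanishing of the $R_n$-homology of the torus $X_1\cap X_2$ (its kernel circle is nontrivial in $\Gamma_0$, hence in $\bar\Gamma_n$), and the surjectivity of $\pi_1(X)\to\pi_1(\C^2\setminus C)$ via a generic line of small slope together with the Lefschetz-type theorem --- are sound and parallel the paper's proof, which instead works with the boundary space $X_\infty$ (the link-at-infinity complement glued into the tube boundary) and computes the contribution at infinity by Fox calculus on the Hopf link group.
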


\begin{proof}
Reordering, we can assume that the lines $L_1,\ldots, L_k$ are all parallel, with $L_j$ not parallel to $L_1$ for all $k+1\leq j \leq m$, and $k\geq 2$.

Let $\overline{L_1}$ be the closure of $L_1$ in $\C P^2$, and let $T$ be a tubular neighborhood of $\overline{L_1}$ in $\C P^2$ with boundary $\partial T$, constructed so that $\partial T \setminus (C\cup L_\infty)$ is the space
$$
X_\infty=N\cup_{\left(\left(\bigsqcup\limits_{i=1}^r T_i\right)\sqcup T_\infty \right)} \left(\left(\bigsqcup_{i=1}^r S^3_i\backslash K_i\right)\sqcup (S^3_\infty\backslash K_\infty)\right)\subset \C^2\setminus C
$$
defined similarly as the space $X$ from the proof of Lemma \ref{l1}. Here, $L_\infty \subset \C P^2$ is the line at infinity, $S^3_\infty$ is a $3$-sphere centered at the intersection point $P_{\infty}$ of $\overline{L_1}$ with the line at infinity, $K_\infty$ is the link of $P_{\infty}$, and $T_\infty$ is the torus along which we glue $N$ to $S^3_\infty\backslash K_\infty$.

By construction,  $X_\infty$ is a deformation retract of $T\backslash\left(C\cup L_\infty\right)$, and by a similar argument using a Zariski theorem of Lefschetz type (like in the proof of Lemma \ref{l1}), we get that the inclusion map $X_{\infty} \hookrightarrow \C^2\backslash C$ induces an epimorphism
$$
\pi_1(X_\infty)\longtwoheadrightarrow \pi_1(\C^2\setminus C),
$$
which in turn implies that
\begin{equation}
\delta_n(C)\leq \text{rk}_{\K_n} H_1(X_\infty; R_n).
\label{eqndom}
\end{equation}

To compute $\text{rk}_{\K_n} H_1(X_\infty; R_n)$, we follow the same steps as in the proof of Lemma \ref{l1}, based on a Mayer-Vietoris argument. The only difference will appear when computing $H_j(S^3_\infty\backslash K_\infty;R_n)$ for $j=0,1,2$, since the linking number homomorphism $\psi$ satisfies that $\psi(\gamma_\infty)=-m$, where $\gamma_\infty$ is a positively oriented meridian around the line at infinity.

Following the same computation as in the proof of  (\ref{eqnZero}), we get that
\begin{equation}
H_0(S^3_\infty\backslash K_\infty;R_n)=0
\label{eqnZeroParallel}
\end{equation}
To compute $\text{rk}_{\K_n}H_2(S^3_\infty \backslash K_\infty;R_n)$ and $\text{rk}_{\K_n}H_1(S^3_\infty\backslash K_\infty;R_n)$, we will use Fox Calculus, since we cannot relate these groups to the homology of a Milnor fiber. For this, we first need to find a nice presentation of $\pi_1(S^3_\infty\backslash K_\infty)$.

By the choices made in the first paragraph of our proof, $K_\infty$ is the Hopf link on $k+1$ components, with $k\geq 2$. A presentation of $\pi_1(S^3_\infty\backslash K_\infty)$ is given by (e.g., see \cite[Lemma 2.7]{maxtommy})
\begin{eqnarray*}
\pi_1(S^3_\infty\backslash K_\infty)&=&\langle \gamma_1,\gamma_2,\ldots,\gamma_{k},y \mid \gamma_iy\gamma_i^{-1}y^{-1} \text{\ for all }i=1,\ldots,k\rangle, \end{eqnarray*}
where $\gamma_1,\gamma_2,\ldots,\gamma_{k}$ are positively oriented meridians around $L_1,\ldots,L_k$ respectively.
An equivalent presentation of $\pi_1(S^3_\infty\backslash K_\infty)$ can be given so that $y$ is the product of meridian loops $\gamma_1,\gamma_2,\ldots,\gamma_{k}$, $\gamma_{\infty}$ (in a certain order that is not important here), e.g., see \cite[Remark 2.8]{maxtommy}.
 In particular, from this second presentation we get that 
 $$\psi(y)=k-m.$$
 For simplicity, let $a_1=\gamma_1$, and $a_j=\gamma_j\gamma_1^{-1}$ for all $j=2,\ldots, k$. Then, we get
\begin{equation}
\pi_1(S^3_\infty\backslash K_\infty)=\langle a_1,a_2,\ldots,a_{k},y\ |\ a_iya_i^{-1}y^{-1} \text{\ for all }i=1,\ldots,k\rangle,
\label{eqnpresentation}
\end{equation}
with
$$
\begin{array}{l}
\psi(a_1)=1,\\
\psi(a_j)=0, \text{ for all }j=2,\ldots,k,\\
\psi(y)=k-m.\\
\end{array}
$$

We compute $H_1(S^3_\infty\backslash K_\infty;R_n)$ and $H_2(S^3_\infty\backslash K_\infty;R_n)$ as right $R_n$-modules, by using the presentation of $\pi_1(S^3_\infty\backslash K_\infty)$ given in (\ref{eqnpresentation}). The chain complex computing these groups looks like
$$
\ldots\rightarrow (R_n)^{k}\xrightarrow{\partial_2} (R_n)^{k+1}\xrightarrow{\partial_1}R_n\rightarrow 0,
$$
where $\partial_2$ is given by the matrix
$$
\overline{\left(
\begin{array}{ccccc}
1-y & 0 & \ldots & 0\\
0 & 1-y &\ldots & 0 \\
\vdots& \vdots & \ddots & \vdots \\
0& 0& \cdots & 1-y \\
a_1-1 &a_2-1&\cdots& a_{k}-1\\
\end{array}
\right)}=
$$
$$
=\left(
\begin{array}{cccc}
1-y^{-1}& 0 & \ldots & 0\\
0 & 1-y^{-1} & \ldots & 0\\
\vdots & \vdots &\ddots &\vdots \\
0 & 0 & \ldots & 1-y^{-1}\\
a_1^{-1}-1 & a_2^{-1}-1 & \ldots & a_{k}^{-1}-1
\end{array}
\right)
$$
Note that $1-a_j^{-1}$ is not zero in $\Z\bar\Gamma_n$, since $a_j^{-1}$ is not the identity in $\Gamma_0$, for $j=2,\ldots k$. Note also that $y$ commutes with $a_1,\ldots,a_k$ in $\pi_1(S^3_\infty\backslash K_\infty)$. Multiply the $k$-th row by $1-a_{k}^{-1}$ on the left (this is a unit in $R_n$). Add the first row times $1-a_1^{-1}$, the second row times $1-a_2^{-1}$, $\cdots$, the $(k-1)$-st row times $1-a_{k-1}^{-1}$, and the last row times $1-y^{-1}$ to the $k$-th row (all the multiplications are on the left) to get
$$
\left(
\begin{array}{ccccc}
1-y^{-1}& 0 & \ldots & 0& 0\\
0 & 1-y^{-1} & \ldots & 0& 0\\
\vdots & \vdots &\ddots &\vdots &\vdots\\
0& 0& \ldots & 1-y^{-1} & 0\\
0 & 0 & \ldots & 0& 0\\
a_1^{-1}-1 & a_2^{-1}-1 & \ldots & a_{k-1}^{-1}-1&a_{k}^{-1}-1\\
\end{array}
\right)
$$
Note that $a_{k}^{-1}-1$ is a unit in $R_n$, and we multiply the last column by $(a_{k}^{-1}-1)^{-1}$ on the right. Add the last column times $1-a_j^{-1}$ to the $j$-th column for all $j=1,\ldots, k-1$. We get
$$
\left(
\begin{array}{ccccc}
1-y^{-1}& 0 & \ldots & 0& 0\\
0 & 1-y^{-1} & \ldots & 0& 0\\
\vdots & \vdots &\ddots &\vdots &\vdots\\
0& 0& \ldots & 1-y^{-1} & 0\\
0 & 0 & \ldots & 0& 0\\
0 & 0 & \ldots & 0&1\\
\end{array}
\right)
$$
This matrix corresponds to $\partial_2$ after a change of basis in both $(R_n)^{k}$ and $(R_n)^{k+1}$. Therefore, we get $H_2(S^3_\infty\backslash K_\infty;R_n)=0$ and
$$
H_1(S^3_\infty\backslash K_\infty,x_0;R_n)=R_n\oplus \left( R_n/(1-y^{-1}) \right)^{\oplus (k-1)}, 
$$
where $x_0$ is a point in $S^3_\infty\backslash K_\infty$, and there are $k-1$ direct summands of the form $R_n/(1-y^{-1})$. By \cite[Proposition 5.6]{Harvey}, we get that
$$
H_1(S^3_\infty\backslash K_\infty;R_n)=\bigoplus\limits_{k-1\text{ copies}} R_n/(1-y^{-1}),
$$
so
$$
\text{rk}_{\K_n}H_1(S^3_\infty\backslash K_\infty;R_n)=(k-1)(m-k).
$$

Now, as in the proof of Lemma \ref{l1}, we get that
\begin{equation}\begin{split}
\text{rk}_{\K_n}H_1(X_\infty;R_n) &=\sum\limits_{i=1}^{r}\left((d_i-1)^2-1\right)+(k-1)(m-k)+r-1\\
&=\sum\limits_{i=1}^{r}(d_i-1)^2+(k-1)(m-k)-1.
\label{eqnTubebad}
\end{split}
\end{equation}
Since $k\geq 2$, we have that $m-k\leq m-2$. Also, $d_i\leq m-k+1$ for all $i=1,\ldots,r$, and $\sum\limits_{i=1}^{r}(d_i-1)=m-k$. 
Thus,
\begin{eqnarray*}
\sum\limits_{i=1}^{r}(d_i-1)^2+(k-1)(m-k)-1 &\leq& (m-k)\cdot \left(\sum\limits_{i=1}^{r}(d_i-1)+k-1\right)-1\\ &=& (m-k)(m-1)-1 \\ &\leq& (m-2)(m-1)-1,
\end{eqnarray*}
which completes the proof.
\end{proof}

\begin{rem}
The higher order degrees $\delta_n(C)$ are {\it not}  homeomorphism invariants of $\C^2 \setminus C$. For example, if $m\geq 3$,  the case discussed in \cref{exmnearpencil} and that of $m$ lines going through a point have homeomorphic complements. However, as we have discussed, their higher order degrees $\delta_n(C)$ are $m-2$ and $m(m-2)$, respectively, for all $n\geq 0$. This is due to the dependence of higher order degrees on the local system given by the linking number homomorphism.
\end{rem}


\subsection{Vanishing of higher-order degrees} \label{sectionvanish}
In the case when an arrangement  contains one line which meets all other lines transversally, higher order degrees are particularly simple. In this section, we prove the following result, which was asserted (without proof) in \cite[Section 3.1]{MaxLeidySurvey}.

\begin{thm}\label{th4}
Assume that the affine plane curve $C$ defines a line arrangement  $\cA=\{L_1,\ldots, L_m\}\subset \C^2$, which is obtained from an essential line arrangement
 $\cA'=\{L_1,\ldots, L_{m-1}\}$ by adjoining a line $L_m$ that is transversal to every line in $\cA'$ (that is,
 the singularities of the curve $C$ along the irreducible component $L_m$ consist of $m-1$ nodes). Then
$$
\delta_n(C)= 0, \text{ for all }n\geq 0.
$$
\end{thm}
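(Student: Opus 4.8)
The plan is to show that adjoining the transversal line $L_m$ only enlarges the fundamental group of the complement by a central infinite cyclic factor, and then to identify $\delta_n(C)$ with the $n$-th order rank $r_n(\cA')$ of the essential arrangement $\cA'$, which vanishes.

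\emph{The fundamental group.} Choose affine coordinates so that $L_m=\{x=0\}$ and restrict the linear projection $(x,y)\mapsto x$ to $U:=\C^2\setminus C$; since $L_m\subset C$ this is a map $U\to\C^*$. Because $L_m$ is transversal to each line of $\cA'$ and $L_m\cap\bigcup\cA'$ consists of $m-1$ distinct points, no line of $\cA'$ is vertical and no multiple point of $\cA'$ lies on $L_m$; hence over a small punctured disc around $0$ the projection is a locally trivial fibration with fibre $F_0$ a line with $m-1$ punctures, and its monodromy around $0$ is trivial, because the $m-1$ intersection points $L_i\cap\{x=c\}$ sweep out disjoint small loops without braiding as $c$ encircles $0$. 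Combining this with the Zariski--van Kampen analysis of the projection (the non-generic slices that contribute relations come only from the multiple points of $\cA'$ and do not see $L_m$), together with the domination $\pi_1(F_0)\times\langle\gamma_m\rangle\twoheadrightarrow\pi_1(U)$ obtained from a tube around $L_m$ as in the proof of Lemma~\ref{l1}, one sees that the kernel of this surjection is the normal closure of exactly the braid-monodromy relations presenting $\pi_1(\C^2\setminus\cA')$ as a quotient of $\pi_1(F_0)$, and that these involve none of the $\gamma_m$-coordinate. Therefore
$$\pi_1(\C^2\setminus C)\ \cong\ \pi_1(\C^2\setminus\cA')\times\Z,$$
with the $\Z$-factor central, generated by a meridian $\gamma_m$ of $L_m$, and with the linking number homomorphism $\psi$ of $C$ restricting to that of $\cA'$ on the first factor and sending $\gamma_m\mapsto 1$.

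\emph{Reduction to $r_n(\cA')$.} Put $G'=\pi_1(\C^2\setminus\cA')$ and $G=G'\times\Z$. Centrality of the $\Z$-factor gives $G_r^{(j)}=(G')_r^{(j)}\times\{0\}$ for all $j\geq 1$, so $\Gamma_n=\Gamma_n'\times\Z$ with $\Gamma_n'=G'/(G')_r^{(n+1)}$; hence $\bar\Gamma_n=\ker\bar\psi\cong\Gamma_n'$, so $\K_n\cong\cK_n'$ (the skew field of fractions of $\Z\Gamma_n'$), and taking $t=\gamma_m$ as the splitting element identifies $R_n\cong\cK_n'[t^{\pm 1}]$, with $G'$ acting on the coefficient skew field $\cK_n'$ through $G'\twoheadrightarrow\Gamma_n'\hookrightarrow\cK_n'$ and with $t=\gamma_m$ central. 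Since $H_1$ with local coefficients depends only on the fundamental group, I would compute $H_1(U;R_n)=H_1(G'\times\Z;R_n)$ via the Wang exact sequence of $1\to G'\to G'\times\Z\to\Z\to 1$. As a $\Z G'$-module $R_n=\bigoplus_{k\in\Z}\cK_n'\,t^{k}$, so $H_p(G';R_n)=\bigoplus_{k}H_p(G';\cK_n')$ with $\gamma_m$ acting by the degree shift $t^{k}\mapsto t^{k+1}$; in particular $H_0(G';R_n)=\bigoplus_k(\cK_n')_{G'}=0$, because $\Gamma_n'\neq 1$ forces some $\bar g-1$ to be a unit in the skew field $\cK_n'$. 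The Wang sequence then collapses to $H_1(U;R_n)\cong\operatorname{coker}(t_*-1)\cong H_1(G';\cK_n')=H_1(\C^2\setminus\cA';\cK_n')$, whence
$$\delta_n(C)=\operatorname{rk}_{\K_n}H_1(U;R_n)=\operatorname{rk}_{\cK_n'}H_1(\C^2\setminus\cA';\cK_n')=r_n(\cA').$$

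\emph{Conclusion and the hard point.} Since $\cA'$ is essential, Theorem~\ref{th2} gives $\delta_n(\cA')<\infty$ for all $n$; this is equivalent to $\operatorname{rk}_{\cK_n'}H_1(\C^2\setminus\cA';\cK_n')=0$, i.e.\ $r_n(\cA')=0$, so with the previous step $\delta_n(C)=0$ for all $n\geq 0$. (When $m=2$, $C$ is two lines through a point, $\pi_1$ is abelian, and $\delta_n(C)=0$ may also be read off directly.) The main obstacle is the geometric identification of $\pi_1(C)$: one must make the Zariski--van Kampen argument precise enough to be certain that no relation couples $\gamma_m$ to the meridians of $\cA'$ beyond centrality — equivalently, that the monodromy of the projection around $L_m$ is genuinely trivial. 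This is exactly where the transversality of $L_m$ with each line of $\cA'$ and the distinctness of the $m-1$ intersection points are used; without them the monodromy around $L_m$ would be a nontrivial full twist and the direct-product description would fail. Once $\delta_n(C)=r_n(\cA')$ is established, essentiality of $\cA'$ enters only through Theorem~\ref{th2}: for a non-essential $\cA'$, e.g.\ $m-1$ parallel lines, the same argument gives $\delta_n(C)=r_n(\cA')=m-2$, recovering Example~\ref{exmnearpencil}.
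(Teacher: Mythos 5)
Your overall strategy is sound and, in its second half, genuinely different from the paper's: after identifying $\pi_1(\C^2\setminus C)\cong\pi_1(\C^2\setminus\cA')\times\Z$ you reduce $\delta_n(C)$ to the $n$-th order rank $r_n(\cA')$ by a Wang-sequence argument, whereas the paper performs an explicit Fox-calculus reduction of the presentation matrix, using \cref{lemRank}. Both arguments invoke \cref{th2} at exactly the same point (finiteness of $\delta_n(C')$ for the essential subarrangement, equivalently $r_n(\cA')=0$), and your route has the advantage that it only uses this finiteness, so it simultaneously proves the later generalization \cref{thmVanishing}. Two caveats, one of which is a real error in the write-up.

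First, the error: the claim ``as a $\Z G'$-module $R_n=\bigoplus_{k}\cK_n(U')\,t^k$, so $H_p(G';R_n)=\bigoplus_k H_p(G';\cK_n(U'))$ with $\gamma_m$ acting by the shift'' is false, because $G'$ does not act through the coefficient field: under $\Gamma_n(U)\cong\Gamma_n(U')\times\Z$ the image of $G'$ is $\Gamma_n(U')\times\{0\}$, which is \emph{not} contained in $\bar\Gamma_n(U)$. Concretely, a meridian $\gamma_i$ of a line of $\cA'$ has $\psi(\gamma_i)=1$, so it acts on $R_n\cong\K_n[t^{\pm1}]$ (with $t=\gamma_m$) by multiplication by $u_i t$, where $u_i=\gamma_i\gamma_m^{-1}\in\bar\Gamma_n(U)$; the $t$-grading is therefore not preserved and the asserted direct-sum decomposition of $H_p(G';R_n)$ does not exist. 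The conclusion you draw from it is nevertheless correct and can be repaired as follows: restrict coefficients along $\Z[\Gamma_n(U')]\hookrightarrow\Z[\Gamma_n(U)]\subset R_n$; since $\Z[\Gamma_n(U)]$ is free over $\Z[\Gamma_n(U')]$ (basis $\{\gamma_m^k\}$) and $R_n$ is flat over $\Z[\Gamma_n(U)]$, $R_n$ is flat over $\Z[\Gamma_n(U')]$, so $H_p(U';R_n)\cong H_p(U';\Z[\Gamma_n(U')])\otimes_{\Z[\Gamma_n(U')]}R_n$. The Wang cokernel is then $H_1(U';\Z[\Gamma_n(U')])\otimes_{\Z[\Gamma_n(U')]}\bigl(R_n/(t-1)R_n\bigr)$, and $R_n/(t-1)R_n\cong\K_n\cong\cK_n(U')$ carries the \emph{canonical} $\Gamma_n(U')$-structure (mod $t-1$ the class of $\gamma_i$ acts by $u_i$, which corresponds to $\gamma_i$ under $\bar\Gamma_n(U)\cong\Gamma_n(U')$, cf.\ \cref{remIdentification}); hence $\operatorname{coker}(t_*-1)\cong H_1(U';\cK_n(U'))$ and $\delta_n(C)=r_n(\cA')$ as you claimed. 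Likewise, $H_0(U';R_n)=0$ should be argued directly rather than via the decomposition: the images of two distinct meridians of $\cA'$ differ by $(u_1-u_2)t$, which is a unit of $R_n$ since $u_1-u_2\in S_n$. With these corrections your computation (and your sanity check against \cref{exmnearpencil}) goes through.

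Second, the input $\pi_1(\C^2\setminus C)\cong\pi_1(\C^2\setminus\cA')\times\Z$ with central $\Z$-factor: your trivial-braid-monodromy picture is the right geometric reason, but as written the decisive claim — that the kernel of $\pi_1(F_0)\times\langle\gamma_m\rangle\twoheadrightarrow\pi_1(U)$ is normally generated by relations not involving $\gamma_m$ — is asserted rather than proved; this is essentially an Oka--Sakamoto-type statement. The paper avoids reproving it by citing Oka's central-extension lemma and then splitting the extension through the Zariski--van Kampen presentation (the relations $s_j$ have zero total $\gamma_m$-exponent and $\gamma_m$ is central, so they can be rewritten in the remaining meridians); you should either follow that route or cite the product theorem directly. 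Once both points are addressed, your proof is complete and is a cleaner, more conceptual alternative to the paper's matrix manipulation.
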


Before proving the theorem, we recall some notation. Let $C'$ be the curve defined by $\cA'$, let $U=\C^2\backslash C$ denote as before the complement of $\cA$, and let $U'=\C^2\setminus C'$ be the complement of $\cA'$. Let $u_0\in U$, which we will take as the base point for the fundamental groups of both $\pi_1(U)$ and $\pi_1(U')$.

By \cite[Lemma 2]{oka}, we have that
\begin{equation}
1\longrightarrow \Z \xrightarrow{g_1} \pi_1(U)\xrightarrow{g_2} \pi_1(U')\longrightarrow 1
\label{eqncentral}
\end{equation}
is a central extension, where $g_1(1)$ is a positively oriented meridian around $L_m$, and the map $g_2$ is induced by inclusion. By the Zariski-Van Kampen  theorem (see, e.g.,  \cite[Chapter 4, Section 3]{dimca}), we find a presentation of $\pi_1(U)$ of the form
\begin{equation}
\pi_1(U)=\langle y_1,\ldots, y_m\mid s_j(y_1,\ldots,y_m)\rangle
\label{eqnU1}
\end{equation}
where $y_1,\ldots, y_{m-1}$ are certain positively oriented meridians about irreducible components of $C'$ and $y_m$ is a positively oriented meridian about $L_m$. All of the $y_i$'s are contained in a generic line section of $\C^2\setminus C$, and $s_j(y_1,\ldots,y_m)$ are certain words on the generators given by braid monodromy. Note that since $y_m$, and $g_1(1)$ are both positively oriented meridians about $L_m$, they must be conjugate, and since $g_1(1)$ is in the center of $\pi_1(U)$, then $y_m=g_1(1)$ in $\pi_1(U)$.

Consider the following splitting of $g_1$
$$
\begin{array}{ccccc}
h: & \pi_1(U) & \longrightarrow &\Z & \\
 & y_i & \mapsto & 0& \text{for } \ i=1,\ldots m-1\\
 & y_m & \mapsto & 1 & \\
\end{array}
$$
which is well defined because it factors through the abelianization of $\pi_1(U)$. Hence, $h(s_j(y_1,\ldots,y_m))=0$ for all $j$, which, along with the fact that $y_m$ is in the center of $\pi_1(U)$, allows us to find an equivalent presentation
\begin{equation}
\pi_1(U)=\langle y_1,\ldots,y_m \mid [y_i,y_m] \text{ for all }i=1,\ldots,m-1; r_j(y_1,\ldots,y_{m-1})\text{ for }j=1,\ldots,l\rangle
\label{eqnU}
\end{equation}
where the $r_j$'s are words in the letters $y_1,\ldots y_{m-1}$. By setting $x_m=y_m$ and $x_i=y_iy_m^{-1}$ for $i=1,\ldots,m-1$, and taking into account that $\psi(r_j(y_1,\ldots,y_{m-1}))=0$ for all $j=1,\ldots, l$, we obtain the following equivalent presentation for $\pi_1(U)$
\begin{equation}
\pi_1(U)=\langle x_1,\ldots,x_m \mid [x_i,x_m] \text{ for all }i=1,\ldots,m-1; r_j(x_1,\ldots,x_{m-1})\text{ for }j=1,\ldots,l\rangle.
\label{eqnpres}
\end{equation}
Using (\ref{eqncentral}) and (\ref{eqnU}), we get the following presentation for $\pi_1(U')$
\begin{equation}
\pi_1(U')=\langle y_1,\ldots,y_{m-1} \mid r_j(y_1,\ldots,y_{m-1})\text{ for }j=1,\ldots,l\rangle.
\label{eqnU'}
\end{equation}

Note that the following map is an isomorphism
$$
\f{f}{\pi_1(U')\times\Z}{\pi_1(U)}{(y_i,t)}{x_i\cdot x_m^t}.
$$
For any $n \geq 0$, we denote by $\Gamma_n(U)$ (resp., $\Gamma_n(U')$) the PTFA group corresponding to $\pi_1(U)$ (resp., $\pi_1(U')$), as in Section \ref{notations}. We then have that $f$ induces an isomorphism
$$
f_n:\Gamma_n(U')\times \Z\longrightarrow\Gamma_n(U).
$$
Moreover, if $\psi:\pi_1(U)\longrightarrow \Z$ is the linking number homomorphism, then $\bar\Gamma_n(U)$ is identified with $\Gamma_n(U')$ via $f_n$, where $$\bar\Gamma_n(U)=\ker\left({\bar \psi}:\Gamma_n(U)\longrightarrow \Z\right),$$
with $\bar \psi$ induced from $\psi$.
As in Section \ref{notations}, we let $S_n=\Z\left[\bar\Gamma_n(U)\right]\backslash \{0\}$,  $\K_n=\Z\left[\bar\Gamma_n(U)\right]S_n^{-1}$, and $R_n=\Z[\Gamma_n(U)]S_n^{-1}$. Let $\cK_n(U')$ denote the (skew) field of quotients of the Ore domain $\Z[\Gamma_n(U')]$.

\begin{rem}
Notice that $f_n$ identifies $\K_n$ with $\cK_n(U')$.
\label{remIdentification}
\end{rem}

Consider the matrix of Fox derivatives for $\pi_1(U')$, that is, 
$$
\left(\frac{\partial r_j(y_1,\ldots, y_{m-1})}{\partial y_i}\right)_{i,j} , \ \ \ 1\leq i \leq m-1, 1\leq j \leq l,
$$
which has entries in $\Z[\pi_1(U')]$, and we take its involution
$$
A=\overline{\left(\frac{\partial r_j(y_1,\ldots, y_{m-1})}{\partial y_i}\right)_{i,j}} .
$$
Let $q_n':\pi_1(U')\longrightarrow \Gamma_n(U')$ be the projection, and let
$$
B(n)=A^{q_n'},
$$
that is, the matrix formed by the images of the entries of $A$ by $q_n'$. Since $\cK_n(U')$ is flat over $\Z[\Gamma_n(U')]$, we have that $B(n)$ is a presentation matrix for the right $\cK_n(U')$-module $H_1(U', u_0;\cK_n(U'))$; again, we refer to \cite[Section 6]{Harvey} for more details about Fox calculus.

\begin{lem}
The rank of the left $\cK_n(U')$-module generated by the rows of $B(n)$ is $m-2$.
\label{lemRank}
\end{lem}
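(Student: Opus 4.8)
\textbf{Proof proposal for Lemma \ref{lemRank}.}

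The plan is to read the rank of $B(n)$ off the fact that it presents the relative homology $H_1(U',u_0;\cK_n(U'))$, and then to compute the rank of that relative group by reducing, via the long exact sequence of the pair $(U',u_0)$, to the \emph{absolute} group $H_1(U';\cK_n(U'))$ — which vanishes rationally precisely because $\cA'$ is essential. Note first that essentiality of $\cA'$ forces $m-1\ge 2$, so $m-2\ge 1$ and the statement is meaningful.

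\textbf{Step 1: translate the lemma into a rank computation.} Since $B(n)$ is a presentation matrix for the right $\cK_n(U')$-module $H_1(U',u_0;\cK_n(U'))$ on the $m-1$ generators coming from $y_1,\dots,y_{m-1}$, and $\cK_n(U')$ is a skew field (over which every module is free and $\operatorname{rk}$ is additive on exact sequences), one gets
$$\operatorname{rk}_{\cK_n(U')}H_1(U',u_0;\cK_n(U'))=(m-1)-\rho,$$
where $\rho$ is the rank of the matrix $B(n)$ over $\cK_n(U')$. As row rank and column rank coincide over a skew field, $\rho$ is exactly the rank of the left $\cK_n(U')$-module spanned by the rows of $B(n)$. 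Hence it suffices to prove that $\operatorname{rk}_{\cK_n(U')}H_1(U',u_0;\cK_n(U'))=1$.

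\textbf{Step 2: pass to absolute homology.} Feed $(U',u_0)$ into the long exact homology sequence with coefficients in the $\Z\pi_1(U')$-module $\cK_n(U')$ (the action factoring through $q_n':\pi_1(U')\to\Gamma_n(U')$). Since $H_1(u_0;\cK_n(U'))=0$ and $H_0(u_0;\cK_n(U'))=\cK_n(U')$, it reads
$$0\to H_1(U';\cK_n(U'))\to H_1(U',u_0;\cK_n(U'))\to \cK_n(U')\to H_0(U';\cK_n(U'))\to 0.$$
Now $H_0(U';\cK_n(U'))=0$: the group $\Gamma_n(U')$ is nontrivial because a meridian $y_i$ is a nonzero class in $\Gamma_0(U')=\Z^{m-1}$, so $y_i-1$ is a nonzero element of the Ore domain $\Z[\Gamma_n(U')]$ and therefore a unit in $\cK_n(U')$, which kills the augmentation coinvariants. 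Consequently $\operatorname{rk}_{\cK_n(U')}H_1(U',u_0;\cK_n(U'))=\operatorname{rk}_{\cK_n(U')}H_1(U';\cK_n(U'))+1$, and we are reduced to showing $\operatorname{rk}_{\cK_n(U')}H_1(U';\cK_n(U'))=0$.

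\textbf{Step 3: use essentiality of $\cA'$.} This is the only input that is not formal. By hypothesis $\cA'$ is an essential line arrangement, so Theorem \ref{th2} applies to the curve $C'$ and gives $\delta_n(C')\le(m-1)(m-3)<\infty$. By the Remark following the definition of $\delta_n$ in Section \ref{notations}, finiteness of $\delta_n(C')$ is equivalent to $H_1(U';\cK_n(U'))$ being a torsion $\Z[\Gamma_n(U')]$-module, i.e.\ of $\cK_n(U')$-rank zero — here one identifies the skew field ``$\cK_n$'' of that Remark, applied to $C'$, with $\cK_n(U')$, both being the Ore quotient field of $\Z[\Gamma_n(U')]$. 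Combining the three steps gives $(m-1)-\rho=0+1$, hence $\rho=m-2$, which is the assertion.

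The only genuinely delicate point is the bookkeeping in Step 1 (right versus left modules, rows versus columns of $B(n)$, and the involution) needed to state the identity $\operatorname{rk}_{\cK_n(U')}H_1(U',u_0;\cK_n(U'))=(m-1)-\rho$ without ambiguity; the substantive ingredient, the vanishing $r_n(C')=0$ for the essential arrangement $\cA'$, is already furnished by Theorem \ref{th2}.
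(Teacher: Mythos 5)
Your argument is correct and follows essentially the same route as the paper: finiteness of $\delta_n(C')$ from Theorem \ref{th2}, the equivalence of finiteness with $\operatorname{rk}_{\cK_n(U')}H_1(U';\cK_n(U'))=0$, the shift by one when passing to $H_1(U',u_0;\cK_n(U'))$, and then reading off the rank of $B(n)$ from the $(m-1)$-generator presentation. The only cosmetic difference is that you prove the relative-versus-absolute rank shift directly from the long exact sequence of the pair together with the vanishing of $H_0(U';\cK_n(U'))$, where the paper simply cites \cite[Proposition 5.6]{Harvey}.
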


\begin{proof}
Since $U'$ is the complement of an essential line arrangement, we get by Theorem \ref{th2} that $\delta_n(C')$ is finite. By \cite[Remark 3.8]{MaxLeidy}, this means that
$$
\text{rk}_{\cK_n(U')}H_1(U';\cK_n(U'))=0,
$$
and by \cite[Proposition 5.6]{Harvey}, we get that
$$
\text{rk}_{\cK_n(U')}H_1(U',u_0;\cK_n(U'))=\text{rk}_{\cK_n(U')}H_1(U';\cK_n(U'))+1=1.
$$
Since $B(n)$ is an $(m-1)\times l$ matrix, the rank of the left $\cK_n(U')$-module generated by the rows of $B(n)$ (which is the same as the rank of the right $\cK_n(U')$-module generated by the columns of $B(n)$) must be $m-2$.
\end{proof}

We are now ready to prove Theorem \ref{th4}.

\begin{proof}[Proof of Theorem \ref{th4}]
Let $n\geq 0$. We start by considering the presentation matrix for $H_1(U,u_0;R_n)$ as a right $R_n$-module given by the involution of the matrix of Fox derivatives corresponding to the presentation of $\pi_1(U)$ from (\ref{eqnpres}), which is
\begin{equation}
\left(\begin{array}{cccc|ccc}
1-x_m^{-1}& 0 & \cdots & 0 \\
0 & 1-x_m^{-1} & \cdots & 0 & &B(n)\\
\vdots & \vdots & \ddots & \vdots\\
0 & 0 & \cdots & 1-x_m^{-1}\\
\hline
x_1^{-1}-1 & x_2^{-1}-1 & \cdots & x_{m-1}^{-1}-1 & 0 & \cdots & 0\\
\end{array}\right)
\label{eqnmatrix}
\end{equation}
where $B(n)$ is seen as a matrix in $\K_n\subset R_n$ by the identification of $\K_n$ and $\cK_n(U')$ given by $f_n$, and the rest of the entries are seen in $R_n$. By \cref{lemRank}, the rank of the left $\K_n$-module spanned by the rows of $B(n)$ is $m-2$. We denote this by $\text{rk}_{\K_n}B(n)=m-2$.

Note that $(1-x_j^{-1})$ are non-zero elements of $S_n$ for all $j=1,\ldots, m-1$. We multiply the first row (on the left) by $(1-x_1^{-1})$, and then add the $j$-th row times $(1-x_j^{-1})$ to the first row  for all $j=2, \ldots, m$. Taking into account that $x_m$ commutes with everything else, we get
$$
\left(\begin{array}{cccc|ccc}
0& 0 & \cdots & 0 \\
0 & 1-x_m^{-1} & \cdots & 0 & &B_1(n)\\
\vdots & \vdots & \ddots & \vdots\\
0 & 0 & \cdots & 1-x_m^{-1}\\
\hline
x_1^{-1}-1 & x_2^{-1}-1 & \cdots & x_{m-1}^{-1}-1 & 0 & \cdots & 0\\
\end{array}\right)
$$
where $\text{rk}_{\K_n} B_1(n)=m-2$, since we just did row operations in $\K_n$ to get from $B(n)$ to $B_1(n)$. Multiplying the first column (on the right) by $(x_1^{-1}-1)^{-1}$ and then doing column operations, we get
\begin{equation}
\left(\begin{array}{cccc|ccc}
0& 0 & \cdots & 0 \\
0 & 1-x_m^{-1} & \cdots & 0 & &B_1(n)\\
\vdots & \vdots & \ddots & \vdots\\
0 & 0 & \cdots & 1-x_m^{-1}\\
\hline
1 & 0 & \cdots & 0 & 0 & \cdots & 0\\
\end{array}\right)
\label{eqnmatrix2}
\end{equation}

Performing row and column operations in $\K_n$, and using that $\text{rk}_{\K_n} B_1(n)=m-2$, we get a matrix of the form
$$
\left(\begin{array}{cccc|ccc|ccc}
0& * & \cdots & * & 0 & \ldots & 0 &0 &\cdots & 0\\
\hline
0 & * & \cdots & * & & &&0 &\cdots & 0\\
\vdots & \vdots & \ddots & \vdots& &I_{m-2}  & &\vdots &\ddots & \vdots\\
0 & * & \cdots & *& &&&0 &\cdots & 0\\
\hline
1 & 0 & \cdots & 0 & 0 & \cdots & 0&0 &\cdots & 0\\
\end{array}\right)
$$
where $I_{m-2}$ is the identity matrix of dimension $m-2$. Performing column operations we can get this matrix to look like
$$
\left(\begin{array}{cccc|ccc|ccc}
0& * & \cdots & * & 0 & \ldots & 0&0 &\cdots & 0\\
\hline
0 & 0 & \cdots & 0 & & &&0 &\cdots & 0\\
\vdots & \vdots & \ddots & \vdots& &I_{m-2}& &\vdots &\ddots & \vdots\\
0 & 0 & \cdots & 0& & &&0 &\cdots & 0\\
\hline
1 & 0 & \cdots & 0 & 0 & \cdots & 0& 0 & \cdots & 0\\
\end{array}\right)
$$
Permuting the first and last rows, and putting the columns corresponding to $I_{m-2}$ as columns $2,3,\ldots m-1$, we get
$$
\left(\begin{array}{ccc|ccc|ccc}
 &  & & 0 & \cdots & 0&0 &\cdots & 0\\
 & I_{m-1} & &\vdots & \ddots & \vdots&\vdots &\ddots & \vdots\\
 &  & & 0 & \cdots & 0&0 &\cdots & 0\\ 
\hline
0 & \cdots & 0 & * & \cdots & *&0 &\cdots & 0\\
\end{array}\right)
$$
Let $\cK_n$ be the skew field of quotients of $\Z[\Gamma_n(U)]$. By a similar argument as in the proof of \cref{lemRank}, the rank of the left $\cK_n$-module spanned by the rows of this matrix must be $m-1$, so the last row is actually identically $0$. Hence, $\delta_n(C)=0$.
\end{proof}

\section{Plane curves}\label{singinf}

In this section, we adapt some of the results of Section \ref{cla} to the context of affine plane curve complements.

\subsection{Upper bounds on higher-order degrees}
The goal of this section is to prove the following generalization of Theorem \ref{th1}, in which the plane curve $C$ is allowed to have {\it mild} singularities at infinity.

\begin{thm}\label{th3}\label{thmboundtan}
Let $C\subset \C^2$ be a reduced plane curve of degree $m$, let $\overline C$ be its closure in $\C P^2$ and let $L_\infty$ be the line at infinity. Suppose that the intersections of $L_\infty$ and $\overline C$ are either transversal or $L_\infty$ is the tangent line to $\overline C$ at a smooth point and it is a simple tangent there (i.e., it has multiplicity $2$). If any of the following two conditions hold
\begin{enumerate}
\item[(a)] $m=2$;
\item[(b)] at least one of the intersections of $\overline C$ and $L_\infty$ is transversal,
\end{enumerate}
then 
$$
\delta_n(C)\leq m(m-2),
$$ for all $n \geq 0$.
\end{thm}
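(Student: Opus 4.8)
The strategy is to adapt the tubular-neighborhood plus Mayer–Vietoris argument used in Lemma \ref{l1} and Lemma \ref{l2} to the present situation, replacing the line $L_1$ in the arrangement case by a suitable component (or generic line) of $C$ and replacing the local contributions at the singular points of the arrangement by the corresponding Milnor-fiber contributions of the singularities of $\overline C$ on $L_\infty$, together with the contribution coming from the link at infinity. Concretely, I would pass to $\C P^2$, let $T$ be a tubular neighborhood of $L_\infty$, and study $X_\infty := \partial T \setminus (\overline C \cup L_\infty) \subset \C^2 \setminus C$. As in Lemma \ref{l2}, $X_\infty$ is obtained by gluing $N = F \times S^1$ (where $F$ is $L_\infty$ with small disks removed around each point of $L_\infty \cap \overline C$) to copies of $S^3_p \setminus K_p$ along tori, one for each intersection point $p \in L_\infty \cap \overline C$. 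Since $L_\infty$ meets $\overline C$, and (by hypothesis (a) or (b)) the local structure is mild, I would check — via a Zariski–Lefschetz argument exactly as in the proofs of Lemma \ref{l1} and Lemma \ref{l2} — that the inclusion $X_\infty \hookrightarrow \C^2 \setminus C$ induces an epimorphism on $\pi_1$, whence $\delta_n(C) \le \operatorname{rk}_{\K_n} H_1(X_\infty; R_n)$.

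The next step is to compute $\operatorname{rk}_{\K_n} H_1(X_\infty; R_n)$ via the Mayer–Vietoris sequence, following verbatim the bookkeeping in Lemma \ref{l1}: the key inputs are (i) the vanishing of $H_2$ of $N$ and of each $S^3_p \setminus K_p$, forcing the connecting map $\alpha$ to be injective with $\operatorname{rk}_{\K_n}\operatorname{Im}(\alpha)=0$ (the argument that $H_2(X_\infty;R_n)$ is free, hence of rank $0$ or $\infty$, bounded by a finite number, carries over since $X_\infty$ has the homotopy type of a $2$-complex); (ii) $\operatorname{rk}_{\K_n}H_0(X_\infty;R_n)=0$ by the same Fox-calculus unit argument, using a difference of two meridians of components of $C$ — here one needs at least one honest meridian of $C$ available in $\pi_1(X_\infty)$, which is exactly what the hypothesis that $L_\infty$ actually meets $\overline C$ guarantees; and (iii) the Euler-characteristic identities for $N$, $T_p$, and $S^3_p \setminus K_p$ as $1$-dimensional local system computations. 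For a transversal intersection point $p$ the local link is a single meridian of $C$, the relevant "Milnor fiber" is contractible, and $\chi=1$ (no contribution beyond bookkeeping); for a simple-tangency point $p$ the local equation is $y^2 = x$ up to analytic change of coordinates — more precisely $\overline C$ meets $L_\infty$ with multiplicity $2$ — and I would identify the local contribution $\operatorname{rk}_{\K_n}H_1(S^3_p \setminus K_p; R_n)$ using a presentation of $\pi_1$ of the complement of the link (a $(2,\text{something})$-type link, or rather the link of $xy = $ versus $x^2-y$, i.e. two strands) and a Fox-calculus computation analogous to the one carried out for $S^3_\infty \setminus K_\infty$ in Lemma \ref{l2}, being careful that the linking-number homomorphism $\psi$ restricted to this local group records the global degree $m$ on the meridian at infinity. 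Assembling these, one gets a formula for $\operatorname{rk}_{\K_n}H_1(X_\infty;R_n)$ as a sum of local terms plus $(\text{number of intersection points}) - 1$.

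The final step is the numerical estimate: one must show the resulting sum is $\le m(m-2)$. Here Bézout ($\sum_p I_p(\overline C, L_\infty) = m$, where $I_p$ is the intersection multiplicity, each $I_p \in \{1,2\}$ by hypothesis) plays the role that $\sum_i (d_i - 1) = m-1$ played in Lemma \ref{l1}. The tangency points contribute a term quadratic in the local multiplicity while the transversal points contribute linearly, and a Cauchy–Schwarz / convexity argument of the same flavor as in Lemma \ref{l1} and Lemma \ref{l2} — bounding each local factor by the global one — yields the bound; the hypotheses (a) $m=2$ and (b) at least one transversal intersection are precisely what is needed to make this convexity estimate land at $m(m-2)$ rather than something larger (when $m=2$ the only possibilities are two transversal points or one simple tangency, both easily handled by hand; when there is a transversal point, the remaining "mass" $m-1$ distributed among tangencies of multiplicity $\le 2$ is small enough).

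\textbf{Main obstacle.} I expect the crux to be the local computation at a simple-tangency point: unlike the arrangement case, the link $K_p$ of $\overline C \cup L_\infty$ at a tangency is not a Hopf link, so one cannot invoke the Milnor-fiber-of-a-homogeneous-singularity shortcut (equation (\ref{eq3}) in Lemma \ref{l1}) directly; instead one must write down an explicit presentation of $\pi_1(S^3_p \setminus K_p)$ (the singularity $\overline C \cup L_\infty$ is locally $y(y - x^2) = 0$ or, after coordinates, governed by a two-strand braid monodromy), feed it through Fox calculus over $R_n$ while tracking the values of $\psi$ on the meridians (note $\psi$ of the $L_\infty$-meridian is $-m$, not $-1$, which is what prevents the naive cancellation and is the source of the dependence on $m$), and extract $\operatorname{rk}_{\K_n}H_1$. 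Getting this local rank correct — and in particular verifying it does not blow up, i.e. that $H_1(S^3_p\setminus K_p;R_n)$ is a torsion $R_n$-module — is the technical heart; conditions (a) and (b) are the hypotheses under which the subsequent arithmetic closes, so I would not be surprised if the proof naturally splits into the case $m = 2$ (done by inspection) and the case "$\ge 1$ transversal point" (where the tangency contributions are controlled by the leftover Bézout mass).
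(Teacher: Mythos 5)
Your overall strategy coincides with the paper's: pass to a tube $T$ around $L_\infty$, take the link at infinity $X=\partial T\setminus C$ decomposed as $N$ glued to the local link complements along tori, use a Zariski--Lefschetz argument to get $\pi_1(X)\twoheadrightarrow\pi_1(\C^2\setminus C)$ and hence $\delta_n(C)\le \operatorname{rk}_{\K_n}H_1(X;R_n)$, run the Mayer--Vietoris bookkeeping of Lemma \ref{l1}, treat each tangency point by an explicit presentation and Fox calculus with $\psi(\gamma_\infty)=-m$ (you correctly identify this as the technical heart; in the paper the local link is the $(2,4)$ torus link and each tangency contributes at most $m-1$), and finish with a B\'ezout count. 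However, two concrete steps of your plan would not go through as written. First, at a transversal point the local link of $\overline C\cup L_\infty$ is a two-component Hopf link, not ``a single meridian of $C$''; its complement is a $2$-torus whose relevant infinite cyclic cover is homotopy equivalent to $S^1$, so the correct local contribution is $\operatorname{rk}_{\K_n}H_1-\operatorname{rk}_{\K_n}H_0=0$, not the ``contractible fiber, $\chi=1$'' you assert. Second, and more importantly, the piece $N$ cannot be handled by ``Euler-characteristic identities'': with $R_n$-coefficients the alternating sum of $\K_n$-ranks need not equal $\chi(N)=0$, because for $r>2$ the $\Gamma_n$-cover of $N$ does not factor through a finite-type infinite cyclic cover (here $\psi$ sends the $L_\infty$-meridian to $-m$ and the boundary loops to $1$ or $2$). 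The paper computes $\operatorname{rk}_{\K_n}H_1(N;R_n)-\operatorname{rk}_{\K_n}H_0(N;R_n)=m(r-2)$, where $r=\#(\overline C\cap L_\infty)$, by a Fox-calculus case analysis, and this value is exactly what makes the final count $m(r-2)+(m-r)(m-1)+1\le m(m-2)$ close, using only $r\le m-1$; no Cauchy--Schwarz/convexity step occurs or is needed.

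The missing computation for $N$ is also where hypotheses (a) and (b) actually enter: a transversal intersection point supplies a generator of $\pi_1(N)$ with $\psi=1$ (a meridian of the branch of $\overline C$ there), which is what permits the change of variables and the unit arguments in $R_n\cong\K_n[t^{\pm1}]$ (the case $m=2$, $r=1$ is done separately). Your proposal attributes the role of (a)/(b) only to the closing numerical estimate, which misses this. Finally, your claim $\operatorname{rk}_{\K_n}H_0(X;R_n)=0$ via ``a difference of two meridians of components of $C$'' fails when $C$ is irreducible, since only one component may be available; the paper proves only $\operatorname{rk}_{\K_n}H_0(X;R_n)\le 1$, and the extra $+1$ is absorbed in the final inequality precisely because $r\le m-1$.
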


\begin{proof} Assume that the intersection $\overline C\cap L_\infty$ consists of $r$ distinct points. 
The case $r=m$ corresponds to the curve $C$ being in general position at infinity, which was already considered in Theorem \ref{th1}.
So, without any loss of generality, we may assume that $r \leq m-1$. 
The proof of the theorem in this case relies on a Mayer-Vietoris argument similar to the one we used in the proofs of Lemma \ref{l1} and Lemma \ref{l2}.

Let $T$ be a tube in $\C P^2$ around $L_\infty$, and let $\{P_1,\ldots, P_r\}=\overline C\cap L_\infty$. If $m>2$, we can assume (after reordering) that the intersection of $\overline C$ and $L_\infty$ is transversal at $P_{r-1}$. Then $T \setminus (\overline C\cup L_\infty)$ deformation retracts to the space
$$
X=\partial T\backslash C=N\cup_{\left(\bigsqcup\limits_{i=1}^r T_i\right)} \left(\bigsqcup_{i=1}^r S^3_i\backslash K_i\right)\subset \C^2\setminus C
$$
where, as in the proof of Lemma \ref{l1}, $N$ is the boundary of a tube around the non-singular part of $L_\infty$, that is, a tube around $L_\infty$ minus a disk around every point $P_i$, $S^3_i\backslash K_i$ is the link complement of the singularity of $\overline C\cup L_\infty$ at $P_i$, and $T_i$ is a $2$-torus described as in the proof of Lemma \ref{l1}. Note that $X$ is the link (complement) at infinity, which is the space used in \cite{MaxLeidy} for proving Theorem \ref{th1}, with the difference that if $C$ is in general position at infinity, $X$ is just the complement of the Hopf link on $m$ components.

By a Zariski-Lefschetz type theorem again, the inclusion $X \hookrightarrow \C^2 \setminus C$ induces an epimorphism
$$
\pi_1(X)\longtwoheadrightarrow \pi_1(\C^2\setminus C),
$$
which in turn implies that
$$
\delta_n(C)\leq \text{rk}_{\K_n} H_1(X;R_n).
$$
It thus suffices to show that $\text{rk}_{\K_n} H_1(X;R_n)\leq m(m-2)$. 

The Mayer-Vietoris sequence for the homology of $X$ with $R_n$-coefficients yields the same equality as in formula (\ref{eqnMV}) of Lemma \ref{l1}, so it remains to compute (or bound) all of the terms on the right-hand side of (\ref{eqnMV}).

\medskip

We begin by noticing that $N$ is homotopy equivalent to the cartesian product of a wedge sum of $r-1$ circles and $S^1$ if $r>1$, and to $S^1$ if $r=1$ (and $m=2$). If $r=1$, a direct Fox Calculus computation yields that
\begin{equation}
\begin{array}{c}
H_2(N;R_n)=0, \\
H_1(N;R_n)=0, \\
\text{rk}_{\K_n}H_0(N;R_n)=m,
\end{array}
\label{eqnNr1}
\end{equation}
where one only uses the fact that a positively oriented meridian $\gamma_\infty$ around $L_\infty$ generates $\pi_1(N)\cong \Z$ and $\psi(\gamma_\infty)=-m$.

If $r>1$, a presentation for the fundamental group of $N\simeq \left(\bigvee\limits_{r-1}S^1\right)\times S^1$ is given as:
$$
\pi_1(N)=\langle a_1,\ldots, a_{r-1}, b \ \vert\ [a_i,b] \text{ for }i=1,\ldots,r-1\rangle,
$$
where each $a_i$ corresponds to a circle in the wedge sum, which in turn corresponds to the boundary of a disk centered at $P_i$, while $b$ is a positively oriented meridian about $L_\infty$. In particular, since the intersection of $\overline C$ and $L_\infty$ is transversal at $P_{r-1}$, the loop $a_{r-1}$ can be chosen to be an oriented meridian about the irreducible component of $\overline C$ going through $P_{r-1}$. Hence, $\psi(a_{r-1})=1$, where $\psi$ denotes as before the linking number homomorphism. Setting $x_{r-1}=a_{r-1}$ and $x_j=a_ja_{r-1}^{-\psi(a_j)}$ for all $j=1,\ldots, r-2$, we get the equivalent  presentation
$$
\pi_1(N)=\langle x_1,\ldots, x_{r-1}, b\ \vert\ [x_i,b] \text{ for }i=1,\ldots,r-1\rangle.
$$
The involution of the matrix of Fox derivatives looks like the left-hand side of the matrix in equation (\ref{eqnmatrix}) of \cref{sectionvanish}, after changing $m$ for $r$ and $x_m$ for $b$.

If $r>2$, we have two possible cases: either there exists $j\in\{1,\ldots,r-2\}$ such that $x_j\neq 0$ in $\Gamma_n$ (in which case, by reordering, we can assume that $j=1$), or $x_j=0$ in $\Gamma_n$ for all $j=1,\ldots,r-2$. \\ In the first case, the same computations as in \cref{sectionvanish} yield the left-hand side of the matrix in equation (\ref{eqnmatrix2}) of \cref{sectionvanish}. Using that $\psi(b)=-m$, we get that
$$
\begin{array}{c}
H_2(N;R_n)=0,\\
\text{rk}_{\K_n}H_1(N;R_n)=m(r-2).
\end{array}
$$
We can also see by using Fox Calculus and the fact that $x_1^{-1}-1$ is a unit in $\K_n$  that $\text{rk}_{\K_n}H_0(N;R_n)=0$, so we get that
\begin{equation}
\begin{array}{c}
H_2(N;R_n)=0, \\
\text{rk}_{\K_n}H_1(N;R_n)-\text{rk}_{\K_n}H_0(N;R_n)= m(r-2).
\end{array}
\label{eqnN}
\end{equation}
In fact, these equalities also hold for the case $r=1$ considered in (\ref{eqnNr1}).\\
If $r>2$ and $x_j=0$ in $\Gamma_n$ for all $j=1,\ldots,r-2$, the complex that computes $H_*(N;R_n)$ by Fox Calculus looks like
$$
\longrightarrow (R_n)^{r-1}\xrightarrow{\partial_2} (R_n)^{r}\xrightarrow{\partial_1} R_n\longrightarrow 0
$$
where $\partial_2$ is given by the matrix
$$
\left(\begin{array}{ccccc}
1-b^{-1}& 0 & 0 &\cdots & 0\\
0 & 1-b^{-1} & 0 &\cdots & 0\\
0 & 0 & 1-b^{-1} &\cdots & 0\\
\vdots & \vdots & \vdots & \ddots & \vdots\\
0 & 0 & 0 &\cdots & 1-b^{-1}\\
0 & 0 & \cdots & 0 & x_{r-1}^{-1}-1\\
\end{array}\right)
$$
and $\partial_1$ by
$$
\left(\begin{array}{cccccc}
0\ \ \   & 0\ \ \   & \cdots\ \ \   & 0\ \ \ & x_{r-1}^{-1}-1\ \ \ & b^{-1}-1\ \ \
\end{array}\right) ,
$$
and we can see directly that (\ref{eqnN}) also holds in this case. 

Finally, let us  analyze the case $r=2$. In this case, $N$ is homotopy equivalent to a torus, and the $\Gamma_n$-cover of $N$ factors through the infinite cyclic cover of $N$ corresponding to $\psi$, which is homotopy equivalent to $S^1$ (a similar argument was used in the proof of Lemma \ref{l1}). Thus, in this case,
\begin{eqnarray*}
\text{rk}_{\K_n}H_1(N;R_n)-\text{rk}_{\K_n}H_0(N;R_n)&=& \text{rk}_{\K_n}H_1(S^1;\K_n)-\text{rk}_{\K_n},H_0(S^1;\K_n) \\ &=& \chi(S^1)\\ &=& 0,
\end{eqnarray*}
so the equalities in (\ref{eqnN}) also hold.

\medskip

Let us next compute the local contributions in  (\ref{eqnMV}), i.e., corresponding to the link complements of the $P_i$'s. 
Suppose that the intersection of $\overline C$ and $L_\infty$ is transversal at $P_i$. Then the link $K_i$ of $P_i$ is the Hopf link on $2$ components, and  we can pick meridians $a,b$ around the components of the link to generate $\pi_1(S_i^3\backslash K_i)\cong \Z^2$, and which satisfy
 $\psi(a)=1$ and $\psi(b)=-m$. In this case, $S_i^3\backslash K_i$ deformation retracts onto a torus, so using again an argument involving the infinite cyclic cover, we get that
\begin{equation}
\text{rk}_{\K_n}H_1(S_i^3\backslash K_i;R_n)=\text{rk}_{\K_n}H_0(S_i^3\backslash K_i;R_n),
\label{eqnHopf2}
\end{equation}
where both ranks are finite, and $H_2(S_i^3\backslash K_i;R_n)=0$.

On the other hand, if $L_\infty$ is the tangent line to $\overline C$ at $P_i$, with multiplicity $2$, then $K_i$ is a type $(2,4)$ torus link, which corresponds to the following braid

\begin{center}
\begin{tikzpicture}[scale=0.5]
\braid a_1 a_1 a_1 a_1;
\end{tikzpicture}
\end{center}
Again, we can find a presentation for $\pi_1(S_i^3\backslash K_i)$ with two generators and one relation, namely
$$
\pi_1(S_i^3\backslash K_i)=\langle a, b\ \vert\ ababa^{-1}b^{-1}a^{-1}b^{-1}\rangle,
$$
where $a$ corresponds to a positively oriented meridian around the irreducible component of $\overline{C}$ going through $P_i$ and $b$ corresponds to a positively oriented meridian around $L_\infty$. We then have that $\psi(a)=1$ and $\psi(b)=-m$. By setting $x=a$ and $y=ba^m$, we get the equivalent presentation
$$
\pi_1(S_i^3\backslash K_i)=\langle x, y\ \vert\ xyx^{-m+1}yx^{-1}y^{-1}x^{m-1}y^{-1}\rangle.
$$
Let $s=xyx^{-m+1}yx^{-1}y^{-1}x^{m-1}y^{-1}$. When regarded in $\K_n[t^{\pm 1}]$, $\frac{\partial s}{\partial x}$ is a polynomial of degree at most $m-1$, and  $\frac{\partial s}{\partial x}=0$ if only if $y=1$ in $\K_n[t^{\pm 1}]$. Suppose that $y\neq 1$ in $\K_n[t^{\pm 1}]$. Since there is only one relation, a Fox Calculus computation yields that
$$
\text{rk}_{\K_n}H_1(S_i^3\backslash K_i;R_n)\leq m-1,
$$
and
$$
\text{rk}_{\K_n}H_0(S_i^3\backslash K_i;R_n)= 0.
$$
Hence, if $y\neq 1$ in $\K_n[t^{\pm 1}]$, then
\begin{equation}
\text{rk}_{\K_n}H_1(S_i^3\backslash K_i;R_n)-\text{rk}_{\K_n}H_0(S_i^3\backslash K_i;R_n)\leq m-1.
\label{eqnLink}
\end{equation}
If $y= 1$ in $\K_n[t^{\pm 1}]$, we note that $\frac{\partial s}{\partial y}$ is a polynomial of degree $m$, so $\text{rk}_{\K_n}H_1(S_i^3\backslash K_i;R_n)= m$. Moreover, if $\frac{\partial s}{\partial x}=0$, then a Fox Calculus computation yields that $\text{rk}_{\K_n}H_0(S_i^3\backslash K_i;R_n)= 1$. Hence, if $y= 1$ in $\K_n[t^{\pm 1}]$,  the same inequality as in (\ref{eqnLink}) holds. In both cases ($y=1$ and $y\neq 1$ in $\K_n[t^{\pm 1}]$), we see that $H_2(S_i^3\backslash K_i;R_n)=0$.

\medskip

Using a presentation of $\pi_1(X)$ in which there is a generator $a$ such that $\psi(a)=1$ (for example, taking $a$ to be a positively oriented meridian around the irreducible component of $\overline{C}$ going through $P_i$), we also get that
\begin{equation}
\text{rk}_{\K_n}H_0(X;R_n)\leq 1
\label{eqnX0}
\end{equation}
via a Fox Calculus computation.

\medskip

Next, we deal with the contributions to formula (\ref{eqnMV}) of the tori $T_i$, for any $i=1,\ldots, r$. We have the following presentation of the fundamental group
$$
\pi_1(T_i)=\langle c,b \mid [c,b]\rangle,
$$
where $b$ is a positively oriented meridian about $L_\infty$, $\psi(c)=1$ if the intersection of $\overline C$ with $L_\infty$ is transversal at $P_i$, and $\psi(c)=2$ otherwise.

If the intersection of $\overline C$ with $L_\infty$ is transversal at $P_i$, then the $\Gamma_n$-cover of $T_i$ factors through the infinite cyclic cover corresponding to $\psi$. Moreover, since $\psi(c)= 1$, this infinite cyclic cover is homeomorphic to $S^1\times \R$, thus homotopy equivalent to $S^1$. Therefore, 
\begin{equation}\begin{array}{c}
\text{rk}_{\K_n}H_0(T_i;R_n)-\text{rk}_{\K_n}H_1(T_i;R_n)= \text{rk}_{\K_n}H_0(S^1;\K_n)-\text{rk}_{\K_n}H_1(S^1;\K_n)= \chi(S^1)=0, \\
\text{rk}_{\K_n}H_2(T_i;R_n)=\text{rk}_{\K_n}H_2(S^1;\K_n)=0.
\end{array}
\label{eqnTtrans}
\end{equation}
If $L_\infty$ is tangent to $\overline C$ at $P_i$ (note that there are $m-r$ such $P_i$'s), then we have as before that $\psi(c)=2$. There is a $\K_n$-module isomorphism (e.g., see \cite[Section 2.1]{kirk})
$$
H_*(T_i;R_n)\cong H_*(\widetilde {T_i};\K_n),
$$
where $\widetilde {T_i}$ is the (possibly disconnected) infinite cyclic cover of $T_i$ corresponding to $\psi$. Note that $\widetilde {T_i}$ is either homeomorphic to $S^1\times \R$, or to the disjoint union $(S^1\times \R)\sqcup (S^1\times \R)$, depending on whether $m$ is odd or even, respectively. In both cases, $\widetilde {T_i}$ is homotopy equivalent to a one-dimensional finite CW-complex with vanishing Euler characteristic. Hence
\begin{equation}
\begin{array}{c}
\text{rk}_{\K_n}H_0(T_i;R_n)-\text{rk}_{\K_n}H_1(T_i;R_n)=\text{rk}_{\K_n}H_0(\widetilde {T_i};\K_n)-\text{rk}_{\K_n}H_1(\widetilde {T_i};\K_n)=\chi(\widetilde {T_i})=0, \\
\text{rk}_{\K_n}H_2(T_i;R_n)=\text{rk}_{\K_n}H_2(\widetilde {T_i};\K_n)=0.
\end{array}
\label{eqnTtan}
\end{equation}

\medskip

Arguing as in the proof of Lemma \ref{l1}, we also get that
\begin{equation}
\text{rk}_{\K_n}\Im(\alpha)=0.
\label{eqnker2}
\end{equation}

\medskip

Altogether, substituting  (\ref{eqnN}), (\ref{eqnHopf2}), (\ref{eqnLink}), (\ref{eqnX0}), (\ref{eqnTtrans}), (\ref{eqnTtan}) and (\ref{eqnker2}) into (\ref{eqnMV}), we get that
$$
\text{rk}_{\K_n}H_1(X;R_n)\leq m(r-2)+(m-r)(m-1)+1=m^2-3m+r+1.
$$
Moreover, since we assumed that $r\leq m-1$, we get that
$$
\text{rk}_{\K_n}H_1(X;R_n)\leq m^2-2m=m(m-2),
$$
thus concluding the proof.
\end{proof}

\subsection{Vanishing of higher-order degrees}
In the case when an irreducible component of a plane curve $C$ is a line $L$ which meets all other components of $C\cup L_\infty$ transversally (with $L_\infty$ denoting the line at infinity in $\C P^2$), higher order degrees are particularly simple. This is exemplified in the following generalization of \cref{th4}.
\begin{thm}\label{thmVanishing}
Let $n\geq 0$. Assume that the affine plane curve $C$ is of the form $C=L\cup C'$, where $C'$ is a curve of degree $m-1$ in $\C^2$ such that $\delta_n(C')$ is finite, and $L$ is a line transversal to $C'$, such that $L\cap C'$ consists of $m-1$ distinct points. Then,
$$
\delta_n(C)= 0.
$$
\end{thm}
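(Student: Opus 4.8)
The plan is to run, essentially verbatim, the argument used for \cref{th4}: the only essential change is that the finiteness of $\delta_n(C')$, obtained there from \cref{th2} applied to an essential line arrangement, is now part of the hypothesis. Write $U=\C^2\setminus C$ and $U'=\C^2\setminus C'$. Since $L$ is transversal to $C'$ and $L\cap C'$ consists of $m-1$ distinct points, B\'ezout's theorem in $\C P^2$ forces $\overline L$ and $\overline{C'}$ to meet exactly at those $m-1$ affine points and nowhere on $L_\infty$, so $L$ is a generic line with respect to $C'$; hence \cite[Lemma~2]{oka} gives a central extension
\[
1\longrightarrow \Z \xrightarrow{\,g_1\,} \pi_1(U) \xrightarrow{\,g_2\,} \pi_1(U') \longrightarrow 1
\]
with $g_1(1)$ a positively oriented meridian about $L$. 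By the Zariski--Van Kampen theorem one gets a presentation $\pi_1(U)=\langle y_1,\dots,y_m\mid s_j\rangle$ with $y_1,\dots,y_{m-1}$ meridians about the branches of $C'$ cut out by a generic line section and $y_m$ a meridian about $L$; since $y_m$ is central and the homomorphism $h\colon\pi_1(U)\to\Z$, $y_i\mapsto 0$ $(i<m)$, $y_m\mapsto 1$, factors through $H_1(U;\Z)$, the manipulations of \cref{sectionvanish} apply and produce the presentation \eqref{eqnpres}, namely $\pi_1(U)=\langle x_1,\dots,x_m\mid [x_i,x_m]\ (1\le i\le m-1),\ r_j(x_1,\dots,x_{m-1})\ (1\le j\le l)\rangle$ with $x_m=y_m$, $x_i=y_iy_m^{-1}$, together with the isomorphism $f\colon\pi_1(U')\times\Z\xrightarrow{\,\sim\,}\pi_1(U)$, $(y_i,t)\mapsto x_ix_m^t$, and (as in \cref{remIdentification}) the induced identifications of $\bar\Gamma_n(U)$ with $\Gamma_n(U')$ and of $\K_n$ with $\cK_n(U')$. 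Nothing in this step uses that $C'$ is an arrangement; the only difference is that $y_1,\dots,y_{m-1}$ need not be meridians of distinct irreducible components, which is irrelevant since only $\psi(y_i)=1$ and the factorization of $h$ through $H_1(U;\Z)$ are used.

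Next I would reprove \cref{lemRank} in this setting: if $B(n)$ denotes the $q'_n$-image of the involution of the matrix of Fox derivatives of the presentation \eqref{eqnU'} of $\pi_1(U')$, then the left $\cK_n(U')$-module spanned by the rows of $B(n)$ has rank $m-2$. The proof is word for word that of \cref{lemRank}, with the input ``$\delta_n(C')$ finite'' now coming from the hypothesis instead of from \cref{th2}: finiteness gives $\mathrm{rk}_{\cK_n(U')}H_1(U';\cK_n(U'))=0$ by \cite[Remark~3.8]{MaxLeidy}, hence $\mathrm{rk}_{\cK_n(U')}H_1(U',u_0;\cK_n(U'))=1$ by \cite[Proposition~5.6]{Harvey}, and since $B(n)$ is an $(m-1)\times l$ presentation matrix of this module its row rank equals $(m-1)-1=m-2$.

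With this in hand, the involution of the matrix of Fox derivatives of \eqref{eqnpres} over $R_n$ is exactly the matrix \eqref{eqnmatrix} built from this $B(n)$, and I would apply the sequence of row and column operations carried out in the proof of \cref{th4}. That reduction uses only that each $1-x_j^{-1}$ $(1\le j\le m-1)$ lies in $S_n$ — which holds since $x_j=y_jy_m^{-1}$ is already nontrivial in $\Gamma_0=H_1(U;\Z)$ — that $x_m$ is central, and that $\mathrm{rk}_{\K_n}B(n)=m-2$; it terminates in a matrix whose first $m-1$ rows form an identity block and whose last row is a length-$(m-2)$ row vector $w$ with entries in $R_n$. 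Letting $\cK_n$ be the skew field of $\Z\Gamma_n(U)$, one has $\mathrm{rk}_{\cK_n}H_1(U,u_0;\cK_n)=\mathrm{rk}_{\cK_n}H_1(U;\cK_n)+1\ge 1$ (the $+1$ from $H_0(\mathrm{pt};\cK_n)$, using $H_0(U;\cK_n)=0$ since $\Gamma_n(U)$ is infinite), so the $\cK_n$-rank of the reduced matrix is at most $m-1$, forcing $w=0$. The long exact sequence of the pair $(U,u_0)$ with $R_n$-coefficients, together with $H_0(U;R_n)=0$ (a Fox Calculus computation, since $x_1^{-1}-1$ is a unit in $R_n$), then gives an exact sequence $0\to H_1(U;R_n)\to H_1(U,u_0;R_n)\to R_n\to 0$ in which $H_1(U,u_0;R_n)\cong R_n$; a surjective endomorphism of the Noetherian module $R_n$ is injective, hence $H_1(U;R_n)=0$, i.e.\ $\delta_n(C)=0$.

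The only step that is not a routine transcription of the proof of \cref{th4} — and thus the main obstacle — is verifying that the two geometric inputs, Oka's central extension \cite{oka} and the Zariski--Van Kampen presentation, remain valid when $C'$ is an arbitrary plane curve rather than a line arrangement (they do, precisely because the hypotheses on $L$ make it a generic line with respect to $C'$), and that the assumption $\delta_n(C')<\infty$ is exactly the datum that makes the proof of \cref{lemRank} go through. Once these are in place, the substantive computation — the matrix reduction and the rank bound — is reused unchanged, and the conclusion follows.
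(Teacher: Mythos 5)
Your proposal is correct and follows essentially the same route as the paper, whose proof of \cref{thmVanishing} is precisely ``repeat the argument of \cref{th4}, with the finiteness of $\delta_n(C')$ now taken from the hypothesis rather than from \cref{th2} in the proof of \cref{lemRank}.'' Your additional verifications (the B\'ezout argument ensuring $\overline{L}$ avoids $\overline{C'}\cap L_\infty$ so that \cite[Lemma 2]{oka} and the Zariski--Van Kampen presentation still apply, and the concluding long-exact-sequence step in place of citing \cite[Proposition 5.6]{Harvey}) are consistent elaborations of what the paper leaves implicit.
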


\begin{proof}
The proof follows the same steps as in the proof of \cref{th4}. Let $U=\C^2\setminus C$ and $U'=\C^2\setminus C'$. The Zariski-Van Kampen theorem (see, e.g., \cite[Chapter 4, Section 3]{dimca}) can be used to find a presentation of $\pi_1(U)$ such as the one described in (\ref{eqnU1}). Using \cite[Lemma 2]{oka} and the same arguments as in the proof of \cref{th4}, we find presentations of $\pi_1(U)$ and $\pi_1(U')$ such as the ones described in (\ref{eqnU}) and (\ref{eqnU'}), respectively. Thus, \cref{remIdentification} still holds in this setting. The rest of the proof follows the same Fox Calculus computation as in the proof of \cref{th4}, except for the proof of \cref{lemRank}, which in this case follows from the hypothesis that $\delta_n(C')$ is finite.
\end{proof}

\section{Relationship with the first characteristic variety}\label{char}

In this section, we relate the higher-order degrees to more classical Alexander-type invariants.
We begin by recalling the following result:

\begin{prop}(\cite[Proposition 5.1]{MaxLeidy}) \label{propAlex} \ 
If C is an irreducible affine plane curve, then $$\delta_0(C)=\deg \Delta_C(t),$$
where $\Delta_C(t)$ denotes the Alexander polynomial of the curve complement. If, moreover, the Alexander polynomial is trivial, then all higher-order degrees vanish.
\end{prop}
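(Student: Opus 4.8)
The plan is to treat the two assertions in turn, the first by unwinding the definition of $\delta_n(C)$ at the level $n=0$ and the second by observing that a trivial Alexander polynomial collapses the rational derived series of $G=\pi_1(U)$ immediately. For the first part, since $C$ is irreducible, (\ref{eq1}) gives $\Gamma_0=G/G_r^{(1)}=G/G'=H_1(U;\Z)\cong\Z$, generated by the class of a meridian $\mu$ about $C$; the linking number homomorphism $\psi$ carries $\mu$ to $1$, so $\bar\psi\colon\Gamma_0\to\Z$ is an isomorphism and $\bar\Gamma_0=\ker\bar\psi$ is trivial. Hence $\Z\bar\Gamma_0=\Z$, $S_0=\Z\setminus\{0\}$, $\K_0=\Q$, and $R_0=(\Z\Gamma_0)S_0^{-1}=\Q[t^{\pm1}]$ with $t=\mu$, so that $\mathcal{A}_0(C)=H_1(U;\Q[t^{\pm1}])$ is the rationalized classical Alexander module $H_1(U_\infty;\Q)$ of the infinite cyclic (= universal abelian) cover $U_\infty$ of $U$.

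Next I would identify $\delta_0(C)$ with $\deg\Delta_C(t)$. Because $U$ has the homotopy type of a finite $2$-complex, $\mathcal{A}_0(C)$ is a finitely generated module over the PID $\Q[t^{\pm1}]$, hence splits as $\Q[t^{\pm1}]^{a}\oplus T$ with $T$ torsion, and $\delta_0(C)=\text{rk}_{\K_0}\mathcal{A}_0(C)=\dim_\Q\mathcal{A}_0(C)$ is infinite if $a\geq1$ and equals $\dim_\Q T=\deg(\text{order of }T)$ if $a=0$. Here I invoke the classical fact that the Alexander module of an irreducible affine plane curve complement is a torsion $\Z[t^{\pm1}]$-module (see, e.g., \cite{dimca}); this forces $a=0$ and makes $\deg\Delta_C(t)$ finite. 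The order of $T$ is then, up to a unit of $\Q[t^{\pm1}]$, the image of $\Delta_C(t)$, whence $\delta_0(C)=\deg\Delta_C(t)$. (Alternatively, one computes $\delta_0(C)$ by Fox calculus from a presentation of $G$ and recognizes the resulting polynomial as $\Delta_C$.)

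For the second assertion, the key point is that a trivial Alexander polynomial forces the rational derived series to stabilize at once. If $\Delta_C(t)$ is trivial then by the first part $\delta_0(C)=0$, and since $\mathcal{A}_0(C)$ is finitely generated and torsion over $\Q[t^{\pm1}]$ this means $\mathcal{A}_0(C)=H_1(U_\infty;\Q)=0$. As $H_1(U_\infty;\Q)=H_1(G';\Q)=(G'/G'')\otimes_\Z\Q$, the group $G'/G''$ is a torsion abelian group, so every element of $G'=G_r^{(1)}$ has a nonzero power in $[G',G']=G''$ — precisely the condition that $G_r^{(2)}=G_r^{(1)}$. Iterating (each step using $[G_r^{(n)},G_r^{(n)}]=[G',G']=G''$) gives $G_r^{(n)}=G'$ for all $n\geq1$. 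Therefore $\Gamma_n=G/G_r^{(n+1)}=G/G'=\Gamma_0\cong\Z$, $\bar\Gamma_n=\bar\Gamma_0$ is trivial, $\K_n=\Q$, $R_n=\Q[t^{\pm1}]$, and $\mathcal{A}_n(C)=H_1(U;\Q[t^{\pm1}])=\mathcal{A}_0(C)=0$ for every $n$; thus $\delta_n(C)=0$ for all $n\geq0$.

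The main obstacle is not in the argument, which is light once the definitions are unwound, but in the single external input: the torsion-ness of the classical Alexander module of an irreducible affine plane curve. That is what guarantees both $a=0$ in the splitting of $\mathcal{A}_0(C)$ and the finiteness of $\deg\Delta_C(t)$, and I would cite it rather than reprove it. The only genuinely new observation is the elementary group-theoretic one in the third paragraph — once $G'/G''$ is $\Z$-torsion, the rational derived series collapses onto $G'$ from the first stage on, so no information beyond $\delta_0$ survives.
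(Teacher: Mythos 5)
Your argument is correct, and since the paper itself gives no proof of this proposition (it is quoted from \cite[Proposition 5.1]{MaxLeidy}), your reconstruction is essentially the original one: for irreducible $C$ one has $\Gamma_0\cong\Z$, $\bar\Gamma_0=\{1\}$, $\K_0=\Q$, $R_0\cong\Q[t^{\pm 1}]$, so $\delta_0(C)$ is the $\Q$-dimension of the rational Alexander module and hence the degree of $\Delta_C(t)$, while triviality of $\Delta_C$ forces $G'/G''$ to be $\Z$-torsion, which collapses the rational derived series ($G_r^{(n)}=G'$ for $n\geq 1$), giving $\Gamma_n\cong\Z$ and $\delta_n(C)=0$ for all $n$. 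The only caveat is bibliographic: the torsionness of the Alexander module of an arbitrary irreducible affine curve, with no assumption on the behavior at infinity, is best attributed to the finiteness results of \cite{MaxLeidy} (or to Libgober's divisibility theorems) rather than to \cite{dimca}, which treats the generic-at-infinity setting; you correctly isolate this as the one external input.
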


In what follows, we generalize the above result to non-irreducible affine plane curves. Let us first  introduce some notation, following the conventions from \cite{suciu}.

Let $R$ be a Noetherian commutative ring with unit. Assume also that $R$ is a unique factorization domain. Let $M$ be a finitely generated $R$-module. Then $M$ admits a finite presentation of the form
$$
R^q\overset{\Phi} {\longrightarrow} R^m\longrightarrow M.
$$

\begin{defn}\label{de1}
The $i$-th {\it elementary ideal} of $M$, denoted $E_i(M)$, is the ideal of $R$ generated by the minors of size $m-i$ of the $m\times q$ matrix $\Phi$, with the convention that $E_i(M)=R$ if $i\geq m$ and $E_i(M)=0$ if $m-i>q$. 
\end{defn}

\begin{rem}
The $i$-th elementary ideal does not depend on the choice of representation of $M$ as an $R$-module.
\end{rem}
\begin{rem} It follows immediately from Definition \ref{de1} that 
$$E_i(M)\subset E_{i+1} (M)$$ for all $i\geq 0$.
\end{rem}

\begin{defn}
Let $i\geq 0$. We define $\Delta_i(M)\in R$ to be the generator of the smallest principal ideal in $R$ containing $E_i(M)$, that is, the greatest common divisor of all elements of $E_i(M)$.
\end{defn}

\begin{rem}
$\Delta_i(M)$ is well-defined up to multiplication by a unit of $R$.
\end{rem}

Let $G=\langle x_1,\ldots, x_m \ | \ r_1,\ldots, r_q\rangle$ be a finitely presented group, let $H$ be its maximal, torsion free abelian quotient, and let $\pi:G\longrightarrow H$ be the quotient map. The group ring $\Z H$ is a commutative Noetherian ring with unit, which is also a unique factorization domain.

Let $F_m$ be the free group with generators $x_1,\ldots, x_m$. For each $1\leq j \leq m$, there is a linear operator $$\frac{\partial}{\partial x_j}:\Z F_m \longrightarrow \Z F_m$$
(called the {\it $j$-th Fox derivative}) uniquely determined by the following properties:
\begin{itemize}
\item[(a)] $\frac{\partial 1}{\partial x_j}=0$,
\item[(b)]  $\frac{\partial x_i}{\partial x_j}=\delta_{ij}$,
\item[(c)] $\frac{\partial uv}{\partial x_j}=\frac{\partial u}{\partial x_j}+u\frac{\partial v}{\partial x_j}$, for any $u,v\in F_m.$
\end{itemize}
Let $\phi:F_m\longrightarrow G$ be the presenting homomorphism.

\begin{defn}
The {\it Alexander matrix} of the given presentation of  $G$ is
$$
\Phi_G=\left(\frac{\partial r_i}{\partial x_j}\right)^{\pi\circ\phi}:(\Z H)^q\longrightarrow (\Z H)^m.
$$
\end{defn}

Now, let $X$ be a connected CW-complex with a unique $0$-cell $x_0$, and finitely many $1$-cells. Let $G=\pi_1(X,x_0)$ be the fundamental group, and let $H$ be its maximal torsion-free abelian quotient, that is, $H\cong \Z^{b_1(G)}$. The canonical projection $\pi:G\longrightarrow H$ defines a local system of $\Z H$-modules. The long exact sequence for the homology of the pair $(X,x_0)$ with coefficients in $\Z H$ given by this local system is
$$
\ldots \rightarrow 0 \rightarrow H_1(X;\Z H)\rightarrow H_1(X,x_0; \Z H) \rightarrow H_0(x_0;\Z H)\rightarrow H_0(X; \Z H) \rightarrow 0,
$$
where
$$
\ker\left( H_0(x_0;\Z H)=\Z H\rightarrow H_0(X; \Z H) \right)
$$
can be identified with the augmentation ideal $I_H=\ker (\epsilon: \Z H\rightarrow \Z)$, and $\epsilon$ is defined as
$$
\f{\epsilon}{\Z H}{\Z}{\sum n_i h_i}{\sum n_i}
$$
for $n_i\in \Z$, $h_i\in H$.

The $\Z H$-modules $H_1(X;\Z H)$ and $H_1(X,x_0; \Z H)$ depend only on the fundamental group $G$, so we denote them by $B_G$ and $A_G$, respectively. From the above discussion, these modules fit into the following short exact sequence of $\Z H$-modules
$$
0\rightarrow B_G \rightarrow A_G \rightarrow I_H \rightarrow 0.
$$

\begin{defn}
The $\Z H$-module $B_G$ is called the {\it Alexander invariant} of $X$, and $A_G$ is called the {\it Alexander module} of $X$.
\end{defn}

\begin{defn}\label{defnAP}
The {\it Alexander polynomial} of the group $G$, denoted by $\Delta_G$, is defined by
$$
\Delta_G=\Delta_1(A_G)=\gcd(E_1(A_G))\in \Z H.
$$
\end{defn}

\begin{rem}
Sometimes, the Alexander polynomial of the fundamental group of a CW-complex $X$ is defined as $\Delta_0(B_G)=\gcd(E_0(B_G))$, but the definition using the Alexander module instead of the Alexander invariant is more suitable for our purposes. In the case when $b_1(X)=1$ (e.g., $X=\C^2 \setminus C$, where $C$ is an irreducible affine plane curve), the two definitions coincide. Besides, the Alexander matrix $\Phi_G$ provides a presentation for $A_G$, so we can compute $\Delta_G$ from it.
\label{remAG}
\end{rem}

\begin{defn}
Let $X$ be a connected  CW-complex with finite $k$-skeleton, and let $G=\pi_1(X)$. The {\it homology jump loci} of $X$ (over $\C$) are the Zariski closed sets
$$
\cV_d^i(X)=\{\rho\in \Hom(G,\C^*) \mid \dim_\C H_i(X; \C_\rho)\geq d\},
$$
where $\C_\rho$ is the rank-one $\C$-local system on $X$ induced by $\rho$, $0\leq i \leq k$, and $d>0$. When $i=1$, we use the simplified notation $\cV_d(X)$ for $\cV_d^1(X)$.
\end{defn}

Let $X$ be a connected CW complex with a unique $0$-cell and finitely many $1$-cells. Let $\Hom(G,\C^*)^0$ be the identity component of the algebraic group $\Hom(G,\C^*)$. The projection map $\pi:G\longrightarrow H$ induces an isomorphism $\pi^*:\Hom(H,\C^*)\longrightarrow \Hom(G,\C^*)^0$.



\begin{defn}
The {\it characteristic varieties} of $X$ (over $\C$), denoted by $V_d(X)$, are the subvarieties of $\left(\C^*\right)^{b_1(X)}\cong \Spec \C H$ given by
$$
V_d(X)=V(E_{d-1}(B_G\otimes \C)).
$$
\end{defn}

We recall the following result from \cite{Hironaka}, see also \cite[Proposition 4.7]{suciu}.

\begin{prop} \label{propHironaka} 
Let $\rho:H\longrightarrow \C^*$ be a non-trivial character. Then, for all $d\geq 1$,
$$
\pi^*(\rho)\in\cV_d(X)\Longleftrightarrow \rho\in V\left(E_d(A_G\otimes \C)\right)\Longleftrightarrow \rho\in V_d(X).
$$
\end{prop}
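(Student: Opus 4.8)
The plan is to reduce the statement to a single rank computation for the Alexander matrix evaluated at $\rho$, together with some elementary bookkeeping for Fitting ideals. Since the homology jump loci $\cV_d^1$, the $\C H$-modules $A_G\otimes\C$ and $B_G\otimes\C$, and the characteristic varieties $V_d(X)$ all depend only on $G=\pi_1(X)$, I would first replace $X$ by the presentation complex of a presentation $G=\langle x_1,\dots,x_m\mid r_1,\dots,r_q\rangle$; only the finiteness of the number of $1$-cells matters, the number of $2$-cells playing no role below. Writing $\rho$ also for the induced ring homomorphism $\C H\to\C$ and for the associated rank-one local system $\C_\rho=\C_{\pi^*(\rho)}$ on $X$, I would compute $H_*(X;\C_\rho)$ from the cellular chain complex $\C^q\xrightarrow{\partial_2}\C^m\xrightarrow{\partial_1}\C\to 0$, in which $\partial_1$ is the row $\big(\rho(\bar x_1)-1,\dots,\rho(\bar x_m)-1\big)$ with $\bar x_j=\pi(x_j)\in H$, and $\partial_2$ is the Alexander matrix $\Phi_G$ with all entries evaluated through $\rho$, which I denote $\Phi_G^{\rho}$. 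Because $\rho$ is non-trivial, not all $\rho(\bar x_j)$ equal $1$, so $\partial_1$ is surjective; hence $H_0(X;\C_\rho)=0$ and $\dim_{\C}H_1(X;\C_\rho)=(m-1)-\rk_{\C}\Phi_G^{\rho}$.

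For the equivalence with $A_G$, I would recall from \cref{remAG} that $\Phi_G$ is a presentation matrix of the $\C H$-module $A_G\otimes\C$ on $m$ generators, so by \cref{de1} the ideal $E_d(A_G\otimes\C)$ is generated by the $(m-d)$-minors of $\Phi_G$, and, $\rho$ being multiplicative, it carries each such minor to the corresponding minor of $\Phi_G^{\rho}$. Therefore $\rho\in V(E_d(A_G\otimes\C))$ holds exactly when every $(m-d)$-minor of $\Phi_G^{\rho}$ vanishes, i.e.\ when $\rk_{\C}\Phi_G^{\rho}\le m-d-1$, i.e.\ by the rank formula when $\dim_{\C}H_1(X;\C_\rho)\ge d$, i.e.\ when $\pi^*(\rho)\in\cV_d(X)$. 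This gives the first equivalence.

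For the equivalence with $V_d(X)=V(E_{d-1}(B_G\otimes\C))$, I would tensor the exact sequence $0\to B_G\to A_G\to I_H\to 0$ with $\C$ over $\Z$ to get $0\to B_G\otimes\C\to A_G\otimes\C\to I_{H,\C}\to 0$, where $I_{H,\C}=\ker(\epsilon\colon\C H\to\C)$ is the ideal of the trivial character $\mathbf 1\in\Spec \C H$. Localizing at the maximal ideal $\m_\rho$ and using $\rho\neq\mathbf 1$: there is $h\in H$ with $\rho(h)\neq 1$, so $h-1\in I_{H,\C}$ becomes a unit in $(\C H)_{\m_\rho}$, whence $(I_{H,\C})_{\m_\rho}=(\C H)_{\m_\rho}$ is free of rank one and the localized sequence splits, giving $(A_G\otimes\C)_{\m_\rho}\cong(B_G\otimes\C)_{\m_\rho}\oplus(\C H)_{\m_\rho}$. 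Since Fitting ideals commute with localization and satisfy $E_d(N\oplus R)=E_{d-1}(N)$ for $d\ge 1$, we get $(E_d(A_G\otimes\C))_{\m_\rho}=(E_{d-1}(B_G\otimes\C))_{\m_\rho}$, so $\rho\in V(E_d(A_G\otimes\C))$ iff this localized ideal is proper iff $(E_{d-1}(B_G\otimes\C))_{\m_\rho}$ is proper iff $\rho\in V(E_{d-1}(B_G\otimes\C))=V_d(X)$.

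I expect the main obstacle to be this last step: one must observe that $I_H$ is the ideal sheaf of the single point $\mathbf 1\in\Spec\C H$ and is therefore locally free of rank one away from $\mathbf 1$, which is precisely what lets the sequence $0\to B_G\to A_G\to I_H\to 0$ split after localization at $\m_\rho$ and thereby shifts the Fitting index by one. An alternative would be to feed $\C_\rho$ into the universal-coefficient spectral sequence over $\C H$ and use that $\operatorname{Tor}^{\C H}_{*}(\C_{\mathbf 1},\C_\rho)=0$ for $\rho\neq\mathbf 1$ (the relevant Koszul complex contains a unit), so that $H_1(X;\C_\rho)\cong(B_G\otimes\C)\otimes_{\C H}\C_\rho$ and the equivalence with $V_d(X)$ can be read off directly; but the localization argument seems the most economical. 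Everything else is routine Fox-calculus bookkeeping, which here needs no involutions or left/right distinctions because $\C H$ is commutative.
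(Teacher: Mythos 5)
Your argument is correct. Note, however, that the paper itself does not prove \cref{propHironaka}: it simply recalls the statement from Hironaka's paper and from \cite[Proposition 4.7]{suciu}, so there is no ``paper proof'' to match; what you have written is essentially the standard argument from those sources, made self-contained. Your first equivalence is the usual Fox-calculus computation: for the presentation $2$-complex, the twisted chain complex is $\C^q\xrightarrow{\Phi_G^{\rho}}\C^m\xrightarrow{\partial_1}\C$, non-triviality of $\rho$ makes $\partial_1$ surjective, and $\dim_\C H_1(X;\C_{\pi^*\rho})=(m-1)-\rk_\C\Phi_G^{\rho}$ translates directly into the vanishing of the $(m-d)$-minors generating $E_d(A_G\otimes\C)$ (and the paper's conventions in \cref{de1} take care of the degenerate sizes). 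Your second equivalence, localizing $0\to B_G\otimes\C\to A_G\otimes\C\to I_{H}\otimes\C\to 0$ at $\m_\rho$, using that the augmentation ideal becomes free of rank one away from the trivial character, and then the shift $E_d(N\oplus R)=E_{d-1}(N)$ together with the compatibility of Fitting ideals with localization, is exactly the mechanism behind the identification $\cV_d$ versus $V_d$ away from $\mathbf 1$. Two small points worth making explicit if this were written up: the reduction to the presentation complex uses that $H_1(X;\C_\rho)\cong H_1(G;\C_\rho)$ (the universal cover has trivial $H_1$, so the degree-one jump loci, like $A_G$ and $B_G$, depend only on $G$), and in the paper's setting $X=\C^2\setminus C$ is a finite CW-complex with finitely presented fundamental group, so the finiteness of $q$ (or, alternatively, Noetherianity of $\C H$) needed for the Fitting-ideal bookkeeping is automatic.
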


\begin{rem}
If $X=\C^2 \setminus C$ is a plane curve complement, then $\pi^*$ is the identity, and $\Hom(G,\C^*)\cong \Hom(H,\C^*)$ can be identified with $(\C^*)^m$, where $m$ is the number of irreducible components of the plane curve. In this case, \cref{propHironaka} asserts that, away from $(1,\ldots, 1)$, $\cV_d$ and $V_d$ coincide. Moreover, by \cref{remAG}, away from $(1,\ldots, 1)$, we can compute $V_d$ from the dimension $m-d$ minors of the Alexander matrix $\Phi_G$.
\end{rem}

From now on, let $C$ be a plane curve in $\C^2$ with $m$ irreducible components, with $m\geq 2$, and let $G=\pi_1(U, u_0)$, where $U=\C^2\backslash C$. We denote by $\Delta_C\in \Z[t_1^{\pm 1},\ldots, t_m^{\pm 1}]$ the Alexander polynomial $\Delta_G$ of \cref{defnAP}.

\begin{defn}
Let $q$ be a Laurent polynomial in $\C[t_1^{\pm 1}, \ldots, t_m^{\pm 1}]$. We can write $q$ as
$$
q=\sum\limits_k a_kt_1^{i_1^k}\cdot\ldots\cdot t_m^{i_m^k},
$$
for some $a_k\in \C^*$, and $(i_1^k,\ldots,i_m^k)\neq (i_1^j,\ldots,i_m^j)$ for all $k\neq j$. We define the {\it degree} of $q$ as
$$
\deg (q)=\max_k \left(\sum\limits_{l=1}^m i_l^k\right)-\min_j \left(\sum\limits_{l=1}^m i_l^j\right)
$$
\end{defn}

\begin{rem}
The degree of a Laurent polynomial $q$ is $0$ if and only if $q$ is a homogeneous polynomial up to multiplication by a unit of $\C[t_1^{\pm 1}, \ldots, t_m^{\pm 1}]$. Multiplying by a unit of $\C[t_1^{\pm 1}, \ldots, t_m^{\pm 1}]$ does not change the degree, so $\deg(\Delta_C)$ is well-defined.
\end{rem}

\begin{lem}\label{lemIdeal}
$$E_0(H_1(U;R_0))=E_1(H_1(U,u_0;R_0)).$$
\end{lem}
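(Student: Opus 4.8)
The goal is to identify the $0$-th elementary ideal of the Alexander invariant $B_G = H_1(U;R_0)$ with the $1$-st elementary ideal of the Alexander module $A_G = H_1(U,u_0;R_0)$, where $R_0 = \Z H$ with $H = \Z^m$ the maximal torsion-free abelian quotient of $G = \pi_1(U)$. The natural bridge is the short exact sequence of $\Z H$-modules
$$
0 \longrightarrow B_G \longrightarrow A_G \longrightarrow I_H \longrightarrow 0
$$
recalled in the excerpt, where $I_H = \ker(\epsilon \colon \Z H \to \Z)$ is the augmentation ideal. First I would recall that, since $U$ has the homotopy type of a finite CW-complex with a single $0$-cell, the Alexander matrix $\Phi_G$ furnishes an explicit finite presentation of $A_G$: namely $A_G = \coker(\Phi_G \colon (\Z H)^q \to (\Z H)^m)$, so that $E_1(A_G)$ is the ideal generated by the $(m-1)\times(m-1)$ minors of $\Phi_G$. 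The crux is to produce from $\Phi_G$ a presentation of $B_G$ whose maximal minors (i.e.\ the $E_0$-generators) are exactly these same $(m-1)$-minors.

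**Key steps, in order.** (1) Since $m \geq 2$, the augmentation ideal $I_H$ of $\Z H = \Z[t_1^{\pm1},\dots,t_m^{\pm1}]$ is generated by $t_1 - 1, \dots, t_m - 1$, and in fact these elements give a presentation of $I_H$ whose relations are the Koszul syzygies $(t_i - 1)(t_j-1) - (t_j-1)(t_i-1) = 0$; more usefully, $I_H$ has a well-known free resolution (the Koszul complex on $t_1-1,\dots,t_m-1$, which is exact because these form a regular sequence in the Cohen--Macaulay ring $\Z H$). (2) I would splice a presentation $(\Z H)^{q} \xrightarrow{\Phi_G} (\Z H)^m \to A_G$ of $A_G$ together with the length-one piece $(\Z H)^{m} \xrightarrow{d} (\Z H) \to I_H$ of the Koszul resolution of $I_H$ (where $d = (t_1-1,\dots,t_m-1)^{T}$, matching the known fact that the map $A_G \to I_H$ sends the $i$-th generator to $t_i - 1$), and use the horseshoe lemma / a standard diagram chase on the short exact sequence to obtain a presentation of $B_G$. (3) Concretely, $B_G = \ker(A_G \to I_H)$, and a presentation matrix for $B_G$ can be read off: $B_G$ is presented by an $m \times (\text{something})$ matrix built from $\Phi_G$ together with the Koszul relations among the $t_i - 1$, and one checks that the ideal of maximal ($m$-sized, i.e.\ $(m-0)$-sized for the $E_0$ computation accounting for the relevant number of generators) minors coincides with the ideal of $(m-1)$-sized minors of $\Phi_G$. (4) Finally, invoke the standard relation between elementary ideals in a short exact sequence — or, more cleanly, the computation for the universal abelian cover identical to the knot-theoretic fact that $E_0(B_G) = E_1(A_G)$ whenever $A_G \to I_H$ is surjective onto the augmentation ideal with the standard Koszul relations — to conclude $E_0(H_1(U;R_0)) = E_1(H_1(U,u_0;R_0))$.

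**Main obstacle.** The delicate point is step (2)--(3): one must be careful that, when passing from a presentation of $A_G$ to one of the submodule $B_G$, the syzygies among the generators $t_1-1,\dots,t_m-1$ of $I_H$ that get "pulled back" into relations for $B_G$ do not contribute extra, smaller minors that would shrink $E_0(B_G)$ below $E_1(A_G)$. This works out precisely because the Koszul syzygies $(t_i-1)e_j - (t_j-1)e_i$ have entries in the ideal $(t_1-1,\dots,t_m-1)$, and a careful minor computation (essentially the same bookkeeping as in the classical knot-group case, cf.\ the treatment in \cite{suciu}) shows the two ideals agree. I would present the argument at the level of "the Alexander matrix $\Phi_G$ simultaneously presents $A_G$ and, after adjoining the Koszul column $(t_1-1,\dots,t_m-1)$ appropriately, $B_G$," making the minor comparison transparent; the reader can fill in the routine linear-algebra verification. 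An alternative, perhaps cleaner, route is to note that both ideals equal the $0$-th Fitting-type ideal detecting the first characteristic variety $V_1(U)$ away from the trivial character (via \cref{propHironaka} and \cref{remAG}), but the direct minor comparison via the Koszul complex is the most self-contained.
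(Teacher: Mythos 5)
There is a genuine gap, and it starts with the coefficient ring. The lemma is a statement about homology with coefficients in $R_0=(\Z\Gamma_0)S_0^{-1}\cong \K_0[t^{\pm 1}]$, the \emph{localization} of $\Z\Gamma_0\cong\Z[t_1^{\pm1},\dots,t_m^{\pm1}]$ at $S_0=\Z\bar\Gamma_0\setminus\{0\}$ --- not about the group ring $\Z H$ itself, as you assume when you set up the extension $0\to B_G\to A_G\to I_H\to 0$ and resolve the augmentation ideal by the Koszul complex. Over $\Z H$ the equality you are aiming for, $E_0(B_G)=E_1(A_G)$, is simply false in general, so the ``careful minor computation'' you defer cannot work out. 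Concretely, take $C$ to be two lines through a point, so $U\simeq\C^*\times\C^*$ and $G\cong\Z^2$: then $B_G=0$, hence $E_0(B_G)$ is the unit ideal, while the Fox matrix of $\langle x,y\mid [x,y]\rangle$ is the column $(1-t_2,\,t_1-1)^{T}$, so $E_1(A_G)=(t_1-1,t_2-1)=I_H\neq\Z H$. The two ideals only become equal after inverting $S_0$, i.e.\ over $R_0$, where $t_1-t_2=t_1(1-t_1^{-1}t_2)$ is a unit. The localization is therefore not a notational afterthought; it is the entire content of the lemma. (Your suggested alternative via characteristic varieties has the same defect: \cref{propHironaka} only identifies zero sets away from the trivial character, which is weaker than the equality of ideals over $R_0$ that the paper needs.)

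The paper's argument exploits exactly this localization and is much shorter: since $m\geq 2$, there are meridians $\gamma_1,\gamma_2$ about distinct components with $1-\gamma_2^{-1}\gamma_1\in S_0$ a unit in $R_0$, so the Fox-calculus computation from the proof of Lemma \ref{l1} gives $H_0(U;R_0)=0$. The long exact sequence of the pair $(U,u_0)$ with $R_0$-coefficients then reads
$$
0\to H_1(U;R_0)\to H_1(U,u_0;R_0)\to H_0(u_0;R_0)=R_0\to 0,
$$
i.e.\ the third term is the \emph{free} module $R_0$ (not the augmentation ideal, which has become the unit ideal after localization). The sequence splits, so $H_1(U,u_0;R_0)\cong H_1(U;R_0)\oplus R_0$, and adding a free rank-one summand shifts elementary ideals by one, giving $E_0(H_1(U;R_0))=E_1(H_1(U,u_0;R_0))$ with no Koszul bookkeeping at all. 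If you want to salvage your outline, you must first tensor everything with $R_0$; at that point the extension problem you were worried about disappears, and the delicate step (2)--(3) of your plan is unnecessary.
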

\begin{proof}
Note that $R_0$ is a commutative ring. From the long exact sequence of a pair with coefficients in $R_0$, we get
$$
0\rightarrow H_1(U;R_0)\rightarrow H_1(U,u_0;R_0) \rightarrow H_0(u_0;R_0)\rightarrow H_0(U;R_0)
$$
If $m\geq 2$, following the same Fox Calculus computation as in the proof of Lemma \ref{l1} for $H_0(X;R_0)$, we get that $H_0(U;R_0)=0$, so we have the short exact sequence of $R_0$-modules
$$
0\rightarrow H_1(U;R_0)\rightarrow H_1(U,u_0;R_0) \rightarrow R_0\rightarrow 0
$$
which splits, so the result follows.
\end{proof}

In the above notation, we have the following result relating the multivariate Alexander polynomial and the zero-th higher order degree, thus extending  \cref{propAlex} to the non-irreducible case.


\begin{thm}\label{thmAP}
Suppose $m\geq 2$, and assume that $\delta_0(C)$ is finite. Then,
$$
\delta_0(C)=\deg(\Delta_C).
$$
\end{thm}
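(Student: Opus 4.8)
The plan is to compute both sides via Fox calculus applied to a presentation of $G=\pi_1(U)$, exploiting the fact that $R_0=(\Z\Gamma_0)S_0^{-1}=(\Z[t_1^{\pm1},\ldots,t_m^{\pm1}])S_0^{-1}$ where $S_0=\Z[\bar\Gamma_0]\setminus\{0\}$ and $\bar\Gamma_0=\ker(\psi)$. Concretely, $R_0$ is obtained from the Laurent polynomial ring by inverting all nonzero elements lying in the kernel of the total-degree augmentation $\psi\colon t_i\mapsto 1$; after choosing a splitting, $R_0\cong\K_0[t^{\pm1}]$ where $\K_0$ is the field of fractions of $\Z[\ker\psi]$. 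The module $\cA_0(C)=H_1(U;R_0)$ is then a finitely generated torsion module over the PID $R_0$ (torsion precisely because $\delta_0(C)<\infty$), and its order is, by \cref{lemIdeal} and Definition~\ref{defnAP}, the image in $R_0$ of $\Delta_C=\gcd E_1(A_G)$ under localization. So $\delta_0(C)=\operatorname{rk}_{\K_0}\cA_0(C)$ equals the degree, as a skew-Laurent polynomial in $t$, of the image of $\Delta_C$ in $\K_0[t^{\pm1}]$.

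First I would make precise the relation between $E_1(A_G)\subset\Z H$ and $E_0(\cA_0(C))\subset R_0$: since $R_0$ is flat over $\Z H$ and $H_1(U,u_0;R_0)=A_G\otimes_{\Z H}R_0$, the elementary ideal $E_1(H_1(U,u_0;R_0))$ is the extension $E_1(A_G)R_0$; combining with \cref{lemIdeal} gives $E_0(\cA_0(C))=E_1(A_G)R_0$, hence $\Delta_0(\cA_0(C))$ is $\Delta_C$ up to a unit of $R_0$. Because $R_0$ is the PID $\K_0[t^{\pm1}]$, the rank over $\K_0$ of the torsion module $\cA_0(C)$ equals the $t$-degree of its order, i.e.\ the $t$-degree of the image of $\Delta_C$ in $\K_0[t^{\pm1}]$ (a nonzero element there, again because $\delta_0(C)<\infty$ forces $\Delta_C$ not to lie in the kernel of the localization).

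Second I would identify this $t$-degree with $\deg(\Delta_C)$ in the sense of the multidegree definition above. Write $\Delta_C=\sum_k a_k t_1^{i_1^k}\cdots t_m^{i_m^k}$. Under the splitting $R_0\cong\K_0[t^{\pm1}]$ induced by a choice of $t\in\Gamma_0$ with $\psi(t)=1$, each monomial $t_1^{i_1^k}\cdots t_m^{i_m^k}$ maps to $(\text{unit in }\K_0)\cdot t^{\,i_1^k+\cdots+i_m^k}$, since its image differs from $t^{\sum_l i_l^k}$ by an element of $\ker\psi$, which is a unit in $\K_0$. Grouping the monomials of $\Delta_C$ by the value of $\sum_l i_l^k$, the top-degree coefficient in $t$ is a sum of units of $\K_0$ times the corresponding $a_k$; the key point, and the step I expect to be the main obstacle, is to rule out cancellation of the extreme terms — that this leading ($t$-)coefficient is nonzero, and likewise the lowest. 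This is exactly the content of $\Delta_C$ surviving the localization; one argues that the submodule of $\Z[t_1^{\pm1},\ldots,t_m^{\pm1}]$ spanned by the monomials of a fixed total degree injects into $\K_0$ (the images being $\K_0$-multiples of a single power of $t$ by elements of $\ker\psi$ whose $\Z$-span is torsion-free), so no $\Z$-linear combination of distinct-degree monomials with a nonzero top block can vanish in $\K_0[t^{\pm1}]$. Granting this, the $t$-degree of the image of $\Delta_C$ is exactly $\max_k(\sum_l i_l^k)-\min_j(\sum_l i_l^j)=\deg(\Delta_C)$, and combining with the first two steps yields $\delta_0(C)=\deg(\Delta_C)$. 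I would also remark that the hypothesis $m\geq 2$ is used so that $H_0(U;R_0)=0$ (as in \cref{lemIdeal}), and that $\delta_0(C)<\infty$ is what guarantees $\Delta_C\neq 0$ after localization, so both sides are genuinely finite.
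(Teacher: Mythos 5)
Your proposal is correct and follows essentially the same route as the paper: flatness plus \cref{lemIdeal} identify $E_0(H_1(U;R_0))$ with the extension of $E_1(A_G)$, the PID structure of $R_0\cong\K_0[t^{\pm1}]$ turns $\delta_0(C)$ into the $t$-degree of the order (i.e.\ of $\Delta_C$, since gcd's survive the localization --- the point the paper isolates in \cref{lemPrimes}), and one checks that total degree is preserved under the identification. Your no-cancellation argument via linear independence of distinct elements of $\bar\Gamma_0$ in $\Z\bar\Gamma_0\subset\K_0$ is a valid substitute for the paper's explicit choice of splitting $t=t_1$, which makes that step transparent.
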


\begin{proof}
Note that all the rings that we will deal with in this proof are commutative, so we will not  distinguish between left and right modules. Let $\gamma_1,\ldots, \gamma_m$ be positively oriented meridians around the different components of $C$, let $\psi:\pi_1(U)\rightarrow \Z$ be the linking number homomorphism. Consider the splitting of $\psi$ given by
$$
\f{\phi}{\Z}{\pi_1(U)}{1}{\gamma_1}
$$
which we use to identify $R_0$ with $\K_0[t^{\pm 1}]$. With this identification, we can think of $R_0$ as
$$
Q\left(\Z\left[\left(\frac{t_2}{t_1}\right)^{\pm 1}, \ldots,\left(\frac{t_m}{t_1}\right)^{\pm 1}\right]\right)[t_1^{\pm 1}]
$$
where $\K_0=Q\left(\Z\left[\left(\frac{t_2}{t_1}\right)^{\pm 1}, \ldots,\left(\frac{t_m}{t_1}\right)^{\pm 1}\right]\right)$ is the field of quotients of $\Z\left[\left(\frac{t_2}{t_1}\right)^{\pm 1}, \ldots,\left(\frac{t_m}{t_1}\right)^{\pm 1}\right]$. We can think of $\Z[\Gamma_0]$ as $\Z[t_1^{\pm 1}, \ldots, t_m^{\pm 1}]$ seen inside of $R_0$ this way. Note that the degree of any $q\in\Z[t_1^{\pm 1}, \ldots, t_m^{\pm 1}]$ is the same as the degree of $q$ in $t$ seen as an element of $\K_0[t^{\pm 1}]$.

Let $f_1,\ldots, f_r$ be a set of generators of $E_1(A_G)$. We have that
$$
\Delta_C=\gcd_{\Z[t_1^{\pm 1}, \ldots, t_m^{\pm 1}]}(f_1,\ldots, f_r).
$$
Since $R_0$ is a flat $\Z[\Gamma_0]$-module,  $f_1,\ldots, f_r$ are also a set of generators of $E_1(H_1(U,u_0;R_0))$. By assumption, $\delta_0(C)$ is finite, which means that $E_1(H_1(U,u_0;R_0))$ is not the zero ideal. Moreover, $R_0\cong \K_0[t^{\pm 1}]$ is a PID, so there exists $p\in \K_0[t^{\pm 1}]$ such that
$$
E_1(H_1(U,u_0;R_0))=(p)
$$
and, by definition, $\delta_0(C)=\deg_{\K_0[t^{\pm 1}]} (p)$. Moreover, by considering the prime decomposition of $f_1,\ldots,f_r$ in $\Z[t_1^{\pm 1}, \ldots, t_m^{\pm 1}]$ and applying \cref{lemPrimes} (below), we can choose such $p\in \Z[t_1^{\pm 1}, \ldots, t_m^{\pm 1}]$ such that there exists $a\in \Z[t_1^{\pm 1}, \ldots, t_m^{\pm 1}]$, where $a$ is a unit in $R_0$, with 
$$
p\cdot a=\Delta_C
$$
But units in $R_0$ which are contained in $\Z[t_1^{\pm 1}, \ldots, t_m^{\pm 1}]$ all have degree 0 as elements of $\Z[t_1^{\pm 1}, \ldots, t_m^{\pm 1}]$, therefore
$$
\deg_{\Z[t_1^{\pm 1}, \ldots, t_m^{\pm 1}]}(\Delta_C)=\deg_{\Z[t_1^{\pm 1}, \ldots, t_m^{\pm 1}]}(p)=\delta_0(C).
$$
\end{proof}

\begin{lem}
Let $R=Q\left(\Z\left[\left(\frac{t_2}{t_1}\right)^{\pm 1}, \ldots,\left(\frac{t_m}{t_1}\right)^{\pm 1}\right]\right)[t_1^{\pm 1}]$, and $S=\Z[t_1^{\pm 1}, \ldots, t_m^{\pm 1}]$. Let $q$ be a prime element in $S$. Then, $q$ is either prime or a unit in $R$.
\label{lemPrimes}
\end{lem}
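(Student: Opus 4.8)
The plan is to recognize $R$ as a localization of $S$, after which the statement becomes the standard fact about what happens to a prime element when one localizes a domain. First I would rewrite $S$ using the change of variables implicit in the definition of $R$. Set $B:=\Z\big[(t_2/t_1)^{\pm 1},\ldots,(t_m/t_1)^{\pm 1}\big]\subset S$. Since $t_i=t_1\cdot(t_i/t_1)$ for every $i$, the subring $B[t_1^{\pm 1}]$ of $S$ contains all the generators $t_i^{\pm 1}$, so $S=B[t_1^{\pm 1}]$; grading by total degree in $t_1,\ldots,t_m$ shows that the powers of $t_1$ are $B$-linearly independent, so this is genuinely a Laurent polynomial ring in the single variable $t_1$ over $B$. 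Consequently, inverting the multiplicative set $\Sigma:=B\setminus\{0\}$ gives
$$
\Sigma^{-1}S=\Sigma^{-1}\big(B[t_1^{\pm 1}]\big)=\big(\Sigma^{-1}B\big)[t_1^{\pm 1}]=Q(B)[t_1^{\pm 1}]=R .
$$
(This is, in the notation of the proof of \cref{thmAP}, exactly the identity $R_0=(\Z\Gamma_0)(\Z\bar\Gamma_0\setminus\{0\})^{-1}$ with $\Z\Gamma_0=S$ and $\Z\bar\Gamma_0=B$.) So $R$ is simply the localization of $S$ at $\Sigma$.

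Next I would invoke the dichotomy for prime elements under localization. Since $S$ is a domain and $0\notin\Sigma$, the ring $R=\Sigma^{-1}S$ is a domain, and its prime ideals are exactly the extensions of the prime ideals of $S$ that are disjoint from $\Sigma$. Our hypothesis says that $(q)$ is a nonzero prime ideal of $S$. If $(q)\cap\Sigma\neq\emptyset$, write $s=qd$ with $s\in\Sigma$; then $q\cdot(d/s)=1$ in $R$, so $q$ is a unit of $R$. If instead $(q)\cap\Sigma=\emptyset$, then the contraction of $(q)R$ to $S$ is again $(q)$ — because $(q)$ is prime and misses $\Sigma$ — so $(q)R$ is a proper ideal; moreover $R/(q)R\cong\Sigma^{-1}\big(S/(q)\big)$ is a localization of the domain $S/(q)$ at the image of $\Sigma$ (which does not contain $0$), hence a domain, so $(q)R$ is prime. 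Thus $(q)R$ is a proper, nonzero, principal prime ideal, i.e., $q$ is a prime element of $R$. In either case $q$ is a unit or a prime of $R$, which is the assertion.

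The only step that is not purely formal is the first one: the observation that the passage from $S$ to $R$ is \emph{literally} the localization at $B\setminus\{0\}$, obtained by viewing both $S$ and $R$ as Laurent polynomial rings in $t_1$ over $B$, respectively over $Q(B)$. Once this is in place the conclusion is just the standard correspondence between the primes of a ring and those of a localization. One could alternatively avoid localization language and argue through Gauss's Lemma over the UFD $B$: after multiplying $q$ by a suitable power of $t_1$ one may assume $q\in B[t_1]$ with nonzero constant term, primality of $q$ in $S$ forces $q$ to be primitive and irreducible over $B$, hence irreducible over $Q(B)$, and it remains irreducible after inverting $t_1$. But this route is longer and less transparent than the localization argument above, so I would present the latter.
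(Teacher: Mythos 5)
Your proof is correct and follows the same route as the paper, which simply cites \cite[Proposition 3.11, iv)]{Atiyah} — i.e., the correspondence of prime ideals under localization — after viewing $R$ as the localization of $S$ at the nonzero elements of $\Z\left[\left(\frac{t_2}{t_1}\right)^{\pm 1},\ldots,\left(\frac{t_m}{t_1}\right)^{\pm 1}\right]$. You have merely written out the details (the identification $S=B[t_1^{\pm 1}]$ and the unit-or-prime dichotomy) that the paper leaves as an exercise.
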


\begin{proof}
This is an exercise in commutative algebra, which is a direct consequence of \cite[Proposition 3.11, iv)]{Atiyah}, for example.
\end{proof}

As a consequence of \cref{propHironaka}, \cref{propAlex}, \cref{thmAP} and \cite[Corollary 3.2]{DPS}, we get the following:
\begin{cor}\label{corcodim}
$$\codim V_1(U)>1\Longleftrightarrow \Delta_C\in\Z\backslash 0 \Longrightarrow \delta_0(C)=0.$$
\end{cor}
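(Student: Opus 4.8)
The plan is to establish separately the equivalence $\codim V_1(U)>1\Leftrightarrow\Delta_C\in\Z\backslash 0$ and the implication $\Delta_C\in\Z\backslash 0\Rightarrow\delta_0(C)=0$, each obtained by combining the results collected above.

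For the equivalence, I would start from the fact that $U=\C^2\setminus C$ is a plane curve complement, so $\pi^{*}$ is the identity; by \cref{propHironaka} and the remark following it, $\cV_1(U)$ and $V_1(U)$ agree away from the trivial character $(1,\ldots,1)\in(\C^{*})^{m}$, where they are both cut out by the ideal $E_1(A_G\otimes\C)$, i.e.\ by the size $m-1$ minors of the Alexander matrix $\Phi_G$. Then I would invoke \cite[Corollary 3.2]{DPS}: the union of the codimension-one irreducible components of $\cV_1(U)$ is exactly the hypersurface $\{\Delta_C=0\}\subset(\C^{*})^{m}$, where $\Delta_C=\Delta_1(A_G)=\gcd(E_1(A_G))$ as in \cref{defnAP}. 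Since $m\geq 2$, the single point $(1,\ldots,1)$ has codimension $m\geq 2$, so it is irrelevant to whether $\codim V_1(U)>1$; hence $\codim V_1(U)>1$ holds if and only if $\{\Delta_C=0\}$ has no codimension-one component, i.e.\ (being a hypersurface) is empty, i.e.\ $\Delta_C$ is a unit in $\C[t_1^{\pm1},\ldots,t_m^{\pm1}]$. As $\Delta_C\in\Z[t_1^{\pm1},\ldots,t_m^{\pm1}]$ is well defined only up to a unit $\pm t_1^{a_1}\cdots t_m^{a_m}$, this last condition is equivalent to $\Delta_C\in\Z\backslash 0$.

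For the implication, I would first deduce finiteness of $\delta_0(C)$ and then apply \cref{thmAP}. If $\Delta_C\in\Z\backslash 0$ then $\Delta_C\neq 0$, so $E_1(A_G)\neq(0)$; by flatness of $R_0$ over $\Z\Gamma_0$, the module $H_1(U,u_0;R_0)$ has nonzero first elementary ideal, and then \cref{lemIdeal} gives $E_0(\mathcal{A}_0(C))=E_0(H_1(U;R_0))\neq(0)$. Over the PID $R_0$ a finitely generated module with nonzero zeroth elementary ideal is torsion, so $\mathcal{A}_0(C)$ is $R_0$-torsion and $\delta_0(C)=\text{rk}_{\K_0}\mathcal{A}_0(C)$ is finite. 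Now \cref{thmAP} applies and yields $\delta_0(C)=\deg(\Delta_C)$; since a nonzero integer has degree $0$ as a Laurent polynomial, $\deg(\Delta_C)=0$, and therefore $\delta_0(C)=0$. (In the degenerate case of an irreducible curve, the conclusion is already part of \cref{propAlex}.)

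The main obstacle is the careful bookkeeping in the first part: one has to be sure that the passage from the twisted-homology characteristic variety $\cV_1(U)$ to the elementary-ideal description $V_1(U)$ and then to the single Laurent polynomial $\Delta_C$ neither loses nor creates a codimension-one component --- in particular that the only discrepancy between $\cV_1(U)$ and $V_1(U)$ lives over $(1,\ldots,1)$, which is harmless precisely because $m\geq 2$. This is exactly the role of \cref{propHironaka} together with \cite[Corollary 3.2]{DPS}; the remaining steps are formal manipulations of elementary ideals over the PID $R_0$.
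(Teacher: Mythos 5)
Your argument is correct and follows essentially the same route as the paper, which deduces the corollary directly from \cref{propHironaka}, \cref{thmAP}, \cref{propAlex} and \cite[Corollary 3.2]{DPS}; your write-up simply makes the citations explicit, including the harmlessness of the trivial character and the finiteness of $\delta_0(C)$ (via \cref{lemIdeal} and torsionness over the PID $R_0$), which the paper records in the remark following the corollary.
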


Moreover, \cref{propAlex,thmAP} imply the following:
\begin{cor}
$\delta_0(C)=0$ if and only if  $\Delta_C$ has a representative in $\Z[t_1^{\pm 1},\ldots,t_m^{\pm 1}]$ that is a non-zero homogeneous polynomial.
\end{cor}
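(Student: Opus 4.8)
The plan is to deduce this corollary by assembling two facts already available to us: \cref{propAlex}, which handles the irreducible case $m=1$, and \cref{thmAP}, which handles $m\ge2$ under the hypothesis that $\delta_0(C)$ is finite. The one extra ingredient I would record first is the dictionary
$$
\delta_0(C)<\infty \iff \Delta_C\ne 0 .
$$
For $m=1$ this is immediate from \cref{propAlex}: the equality $\delta_0(C)=\deg\Delta_C(t)$ forces $\Delta_C\ne0$ exactly when $\delta_0(C)$ is finite. For $m\ge2$, recall that $\delta_0(C)<\infty$ means precisely that $\mathcal{A}_0(C)=H_1(U;R_0)$ is a torsion module over the PID $R_0$, equivalently $E_0\big(H_1(U;R_0)\big)\ne0$; by \cref{lemIdeal} this ideal equals $E_1\big(H_1(U,u_0;R_0)\big)$, which by flatness of $R_0$ over $\Z\Gamma_0$ (exactly as used in the proof of \cref{thmAP}) is the extension $E_1(A_G)\cdot R_0$. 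This extended ideal is nonzero if and only if $E_1(A_G)\ne0$, and since $\Delta_C=\gcd E_1(A_G)$, this is in turn equivalent to $\Delta_C\ne0$.

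Granting this dictionary, both implications are essentially formal. If $\delta_0(C)=0$, then in particular $\delta_0(C)<\infty$, so $\Delta_C\ne0$; applying \cref{propAlex} (when $m=1$) or \cref{thmAP} (when $m\ge2$) gives $\deg\Delta_C=\delta_0(C)=0$, and then by the remark characterizing degree-zero Laurent polynomials, $\Delta_C$ has a representative in $\Z[t_1^{\pm1},\dots,t_m^{\pm1}]$ which is a nonzero homogeneous polynomial. Conversely, suppose $\Delta_C$ admits such a representative. Then $\Delta_C\ne0$ and $\deg\Delta_C=0$ (the degree being unchanged under multiplication by a unit of $\C[t_1^{\pm1},\dots,t_m^{\pm1}]$, hence well defined). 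By the dictionary, $\delta_0(C)<\infty$, so \cref{propAlex} or \cref{thmAP} applies and yields $\delta_0(C)=\deg\Delta_C=0$.

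I do not expect a genuine obstacle here; the corollary is a repackaging of \cref{propAlex,thmAP}. The only points needing a modicum of care are: (i) \cref{thmAP} carries the standing assumption that $\delta_0(C)$ is finite, so in the backward implication one must first pass through the equivalence $\delta_0(C)<\infty\iff\Delta_C\ne0$ before quoting the degree formula; and (ii) one should note that "representative" refers to ambiguity up to units of $\Z[t_1^{\pm1},\dots,t_m^{\pm1}]$, so that having a nonzero homogeneous representative is literally the statement $\deg\Delta_C=0$ with $\Delta_C\ne0$. Both of these are already settled in the preceding remarks, so the write-up should be a few lines.
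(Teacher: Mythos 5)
Your argument is correct and follows essentially the paper's intended route: the corollary is read off from \cref{propAlex} and \cref{thmAP} together with the observation that degree zero means homogeneous up to a unit. The finiteness dictionary $\delta_0(C)<\infty\iff\Delta_C\neq 0$ that you supply (via \cref{lemIdeal}, flat base change of elementary ideals, and $\Delta_C=\gcd E_1(A_G)$) is exactly the equivalence the paper records in the remark immediately following the corollary, so your write-up just makes explicit a step the paper leaves implicit.
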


\begin{rem}
Note that since $\cK_0$ is flat over $\Z[\Gamma_0]$, by \cite[Proposition 5.6]{Harvey} we have that $$\codim V_1(U)=0\Longleftrightarrow E_1(A_G)=(0)\Longleftrightarrow \text{rk}_{\mathcal K_0}H_1(U, u_0;\cK_0)>1\Longleftrightarrow \delta_0 \text{ is not finite}. $$

Hence, if $\delta_0(C)$ is finite (as is the case for all curves in general position at infinity (\cite{MaxLeidy}), the ones described in \cref{thmboundtan}, or all line arrangements except the one consisting of $m$ parallel lines, for $m\geq 2$), then $\codim V_1(U)\geq 1$.
\end{rem}

\begin{exm}\label{exmAP}
Let $C$ be the line arrangement described in \cref{exmnearpencil}, and let $t_i$ the variable corresponding to the component $L_i$, for $i=1,\ldots, m$. Then
$$
\Delta_C=(t_1-1)^{m-2}.
$$
\end{exm}

Let us now specialize our results to the case when $C$ is an essential line arrangement. We recall the following result from \cite{suciu}. 

\begin{prop} (\cite[Theorem 9.15]{suciu}) \label{propDPS} \ 
Let $C$ be an essential line arrangement. Then:
\begin{enumerate}
\item If $C$ consists of $m$ lines going through a point (a pencil of lines), with $m\geq 3$, then $$\Delta_C=(t_1t_2\ldots t_m-1)^{m-2}.$$
\item If $C$ is as in \cref{exmnearpencil}, then $\Delta_C=(t_1-1)^{m-2}$.
\item For all other essential arrangements, $\Delta_C\in \Z\setminus \{ 0\}$.
\end{enumerate}
\end{prop}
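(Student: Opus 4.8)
The plan is to prove the three cases by separate arguments: cases (1) and (2) are direct Fox‑calculus computations from an explicit presentation of $\pi_1(U)$, where $U=\C^2\setminus C$, while case (3) is reduced via \cref{corcodim} to a statement about the first characteristic variety $V_1(U)$, which is then settled using the combinatorial structure theory of characteristic varieties of line arrangements. For case (1), with $m\ge 3$, one uses that the complement of $m$ concurrent lines in $\C^2$ deformation retracts onto the complement of the Hopf link on $m$ components, so that $\pi_1(U)=\langle x_1,\dots,x_m\mid [x_i,w],\ i=1,\dots,m\rangle$ with $w=x_1x_2\cdots x_m$ and $x_i$ a meridian of $L_i$. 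Taking Fox derivatives of the relators $r_i=x_iwx_i^{-1}w^{-1}$ and projecting to $\Z H=\Z[t_1^{\pm1},\dots,t_m^{\pm1}]$ (with $t_i$ the image of $x_i$), the Alexander matrix takes the form $(1-T)\,I_m+u\,v^{\top}$, where $T=t_1t_2\cdots t_m$, $u_i=t_i-1$ and $v_j=t_1\cdots t_{j-1}$; a telescoping sum gives $v^{\top}u=T-1$, so $(1-T)+v^{\top}u=0$. Feeding this into the adjugate identity
\[
\operatorname{adj}\!\big(a I_m+u v^{\top}\big)=a^{m-2}\big((a+v^{\top}u)I_m-u v^{\top}\big),
\]
one sees that every $(m-1)\times(m-1)$ minor of the Alexander matrix equals, up to sign, $(1-T)^{m-2}(t_i-1)(t_1\cdots t_{j-1})$ for suitable $i,j$. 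Since $\gcd_i(t_i-1)=1$, this yields $E_1(A_G)=\big((1-T)^{m-2}\big)$, hence $\Delta_C=\Delta_1(A_G)=(1-T)^{m-2}$ up to a unit, i.e. $\Delta_C=(t_1t_2\cdots t_m-1)^{m-2}$, which also recovers $\delta_0(C)=m(m-2)$.

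\textbf{Case (2).} An affine change of coordinates identifies $U$ with $\C^{*}\times(\C\setminus\{m-1\text{ points}\})$, so $\pi_1(U)\cong\Z\times F_{m-1}$ with presentation $\langle x_1,\dots,x_m\mid [x_1,x_j],\ j=2,\dots,m\rangle$, where $x_1$ is the meridian of the transversal line $L_1$. The corresponding Alexander matrix is the $m\times(m-1)$ matrix whose first row is $(1-t_2,\dots,1-t_m)$ and whose remaining rows form the block $(t_1-1)\,I_{m-1}$. Deleting its first row gives the minor $(t_1-1)^{m-1}$, and deleting the row carrying the entry $t_1-1$ in the $j$‑th column gives the minor $\pm(1-t_j)(t_1-1)^{m-2}$; since $\gcd\big(t_1-1,\,1-t_2,\dots,1-t_m\big)=1$, we conclude $\Delta_C=(t_1-1)^{m-2}$, in agreement with \cref{exmAP}.

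\textbf{Case (3) and the main obstacle.} Here \cref{corcodim} reduces the claim $\Delta_C\in\Z\setminus\{0\}$ to proving that $\codim V_1(U)>1$. Since $C$ is essential and not the arrangement of $m$ parallel lines, $\delta_0(C)$ is finite by \cref{th2}, so we are not in the degenerate case $\codim V_1(U)=0$, and it suffices to show that every irreducible component of $V_1(U)$ has codimension $\ge 2$ in $(\C^{*})^m$. This is where the structure theory of characteristic varieties of line arrangements enters: by Arapura's theorem the positive‑dimensional components of $V_1(U)$ are (possibly torsion‑translated) subtori, and for line arrangements these components are combinatorially determined — the ``local'' component attached to a point of multiplicity $k$ (where $k\ge 3$ of the lines meet, or $k-1$ of them meet $L_\infty$) is a $(k-1)$‑dimensional subtorus supported on the indices of those lines, while the remaining, genuinely global, components arising from multinets have dimension $\ge 2$ but are supported on at least three coordinate directions. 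A local component has codimension $1$ in $(\C^{*})^m$ precisely when $k=m$, which forces $C$ to be a pencil or the near‑pencil of \cref{exmnearpencil}, and a multinet component never has codimension $1$. Hence for every other essential arrangement $\codim V_1(U)>1$, and \cref{corcodim} gives $\Delta_C\in\Z\setminus\{0\}$. The computations in cases (1) and (2) are elementary, so the technical heart — and the main obstacle — is exactly this last input in case (3): the complete combinatorial classification of the codimension‑one components of the first characteristic variety of a line arrangement, which rests on work of Arapura, Libgober, Cohen--Suciu, Falk--Yuzvinsky and \cite{DPS}, and is packaged in \cite[Theorem 9.15]{suciu}.
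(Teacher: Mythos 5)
The paper gives no proof of \cref{propDPS} at all: it is quoted from \cite[Theorem 9.15]{suciu} (which in turn rests on \cite{DPS}), so your proposal supplies strictly more than the paper does for parts (1) and (2), and essentially the same thing (a citation) for part (3). Your Fox-calculus computations are correct: for the pencil, the presentation $\langle x_1,\dots,x_m\mid [x_i,x_1\cdots x_m]\rangle$ is the standard one, the abelianized Alexander matrix is $(1-T)I_m+uv^{\top}$ with $v^{\top}u=T-1$, and the adjugate identity does give all corank-one minors as unit multiples of $(1-T)^{m-2}(t_i-1)$; the only inaccuracy is the claim $E_1(A_G)=\bigl((1-T)^{m-2}\bigr)$ --- this ideal is not principal --- but since $\Delta_C$ is by definition $\gcd\bigl(E_1(A_G)\bigr)$ the conclusion stands, and the near-pencil case matches \cref{exmAP}. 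For (3), reducing via \cref{corcodim} is legitimate inside the paper (\cref{corcodim} does not use \cref{propDPS}), and using \cref{th2} to exclude $\codim V_1(U)=0$ is correct; however, the remaining assertion --- that no essential arrangement other than a pencil or near-pencil has a codimension-one component in $V_1(U)$ --- is precisely the nontrivial content of the cited theorem, so invoking \cite[Theorem 9.15]{suciu} there is circular as phrased. To make (3) self-contained you would need the underlying inputs themselves (Arapura's torsion-translated subtorus structure, the dimension count $k-1$ for local components at multiplicity-$k$ points of $\overline C\cup L_\infty$ including points at infinity, the multinet bound on non-local components, and a word ruling out large translated components), or simply cite \cite{DPS} directly --- which is, in effect, what the paper does for the whole proposition.
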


Then \cref{thmAP} and \cref{propDPS} have the following consequence:
\begin{cor}
Let $C$ be an essential line arrangement. Then:
\begin{enumerate}
\item $\codim V_1(C)>1\Longleftrightarrow \delta_0(C)=0$. That is, the result from \cref{corcodim} is an ``if and only if'' for line arrangements.
\item If $C$ is a pencil of lines, then $\delta_0(C)=m(m-2)$.
\item If $C$ is as in \cref{exmnearpencil}, then $\delta_0(C)=(m-2)$.
\item If $C$ is not as in items (2) or (3), then $\delta_0(C)=0$.
\end{enumerate}
\label{cordelta0}
\end{cor}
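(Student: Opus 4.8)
The plan is to obtain all four assertions as immediate consequences of \cref{thmAP} and \cref{propDPS}. First I would record the input from \cref{thmAP}: since $C$ is an essential line arrangement, \cref{th2} gives $\delta_0(C)\le m(m-2)<\infty$, so the hypothesis of \cref{thmAP} is met and $\delta_0(C)=\deg(\Delta_C)$. Hence the entire corollary reduces to reading off $\deg(\Delta_C)$ from the explicit description of $\Delta_C$ supplied by \cref{propDPS}, together with the observation that the degree function on $\Z[t_1^{\pm1},\dots,t_m^{\pm1}]$ (top total degree minus bottom total degree) is additive under multiplication, since that ring is a domain.

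For items (2), (3) and (4) I would go through the trichotomy of \cref{propDPS} case by case. If $C$ is a pencil of $m\ge 3$ lines, then $\Delta_C=(t_1\cdots t_m-1)^{m-2}$; since $\deg(t_1\cdots t_m-1)=m$, this yields $\delta_0(C)=\deg(\Delta_C)=m(m-2)$. If $C$ is the near-pencil of \cref{exmnearpencil}, then $\Delta_C=(t_1-1)^{m-2}$ with $\deg(t_1-1)=1$, giving $\delta_0(C)=m-2$. In all remaining cases $\Delta_C\in\Z\setminus\{0\}$ has degree $0$, so $\delta_0(C)=0$. I would also note that the boundary case $m=2$ is consistent: an essential two-line arrangement is a pair of non-parallel lines with complement $\C^*\times\C^*$ and abelian fundamental group, so $\delta_0(C)=0$, which agrees with both $m(m-2)=0$ and $m-2=0$.

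For item (1), the implication $\codim V_1(C)>1\Rightarrow\delta_0(C)=0$ is already part of \cref{corcodim}, so only the converse needs an argument. Assuming $\delta_0(C)=0$, items (2) and (3) (proved above) rule out $C$ being a pencil of $m\ge 3$ lines or a near-pencil of $m\ge 3$ lines, because those force $\delta_0(C)=m(m-2)\ge 3$ and $\delta_0(C)=m-2\ge 1$ respectively; hence by \cref{propDPS} we have $\Delta_C\in\Z\setminus\{0\}$ (this also holds in the degenerate case $m=2$), and then $\codim V_1(C)>1$ by the equivalence in \cref{corcodim}. The only point requiring any attention is the bookkeeping at the $m=2$ boundary, where the notions of pencil and near-pencil coincide; this is not a genuine obstacle, since every step is either an elementary degree computation or a direct invocation of results already established in the excerpt.
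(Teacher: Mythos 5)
Your proposal is correct and takes essentially the same route as the paper: the paper deduces \cref{cordelta0} directly from \cref{thmAP} (applicable since \cref{th2} gives finiteness of $\delta_0$ for essential arrangements) together with the trichotomy of \cref{propDPS}, reading off $\deg(\Delta_C)$ in each case and using the equivalence $\codim V_1>1\Leftrightarrow\Delta_C\in\Z\setminus\{0\}$ from \cref{corcodim} for item (1). Your explicit handling of the $m=2$ boundary case is a harmless extra check not spelled out in the paper.
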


\begin{rem}
By Lemma \ref{l1} and \cref{exmAP}, $\delta_n(C)=\delta_0(C)$ for all $n$ for the cases described in items (2) and (3) of the above Corollary.
\end{rem}


\begin{bibdiv}
\begin{biblist}


\bib{Atiyah}{book}{
   author={Atiyah, M. F.},
   author={Macdonald, I. G.},
   title={Introduction to commutative algebra},
   series={Addison-Wesley Series in Mathematics},
   publisher={Westview Press, Boulder, CO},
   date={2016}
}


\bib{Cochran}{article}{
   author={Cochran, T. D.},
   title={Noncommutative knot theory},
   journal={Algebr. Geom. Topol.},
   volume={4},
   date={2004},
   pages={347--398}
}


\bib{dimca}{book}{
   author={Dimca, A.},
   title={Singularities and topology of hypersurfaces},
   series={Universitext},
   publisher={Springer-Verlag, New York},
   date={1992}
}
\bib{DPS}{article}{
   author={Dimca, A.},
   author={Papadima, S.},
   author={Suciu, A.},
   title={Alexander polynomials: essential variables and multiplicities},
   journal={Int. Math. Res. Not. IMRN},
   date={2008},
   number={3},
   pages={Art. ID rnm119, 36}
}



\bib{Harvey}{article}{
   author={Harvey, S. L.},
   title={Higher-order polynomial invariants of 3-manifolds giving lower
   bounds for the Thurston norm},
   journal={Topology},
   volume={44},
   date={2005},
   number={5},
   pages={895--945}
}

\bib{Hironaka}{article}{
   author={Hironaka, E.},
   title={Alexander stratifications of character varieties},
   journal={Ann. Inst. Fourier (Grenoble)},
   volume={47},
   date={1997},
   number={2},
   pages={555--583}
}

\bib{kirk}{article}{
   author={Kirk, P.},
   author={Livingston, C.},
   title={Twisted Alexander invariants, Reidemeister torsion, and
   Casson-Gordon invariants},
   journal={Topology},
   volume={38},
   date={1999},
   number={3},
   pages={635--661}
}

\bib{Li}{article}{
   author={Libgober, A.},
   title={On the homology of finite abelian coverings},
   journal={Topology Appl.},
   volume={43},
   date={1992},
   number={2},
   pages={157--166}
}

\bib{MaxLeidy}{article}{
   author={Leidy, C.},
   author={Maxim, L.},
   title={Higher-order Alexander invariants of plane algebraic curves},
   journal={Int. Math. Res. Not.},
   date={2006},
   pages={Art. ID 12976, 23},
}

\bib{MaxLeidySurvey}{article}{
   author={Leidy, C.},
   author={Maxim, L.},
   title={Obstructions on fundamental groups of plane curve complements},
   conference={
      title={Real and complex singularities},
   },
   book={
      series={Contemp. Math.},
      volume={459},
      publisher={Amer. Math. Soc., Providence, RI},
   },
   date={2008},
   pages={117--130}
}

\bib{maxtommy}{article}{
   author={Maxim, L.},
   author={Wong, K.},
   title={Twisted Alexander invariants of complex hypersurface complements},
   journal={Proc. Roy. Soc. Edinburgh Sect. A},
   date={2018},
   pages={1--25}
}

\bib{oka}{article}{
   author={Oka, M.},
   title={A survey on Alexander polynomials of plane curves},
   conference={
      title={Singularit\'es Franco-Japonaises},
   },
   book={
      series={S\'emin. Congr.},
      volume={10},
      publisher={Soc. Math. France, Paris},
   },
   date={2005},
   pages={209--232}
}

\bib{Suky}{article}{
   author={Su, Y.},
   title={Higher-order Alexander invariants of hypersurface complements},
   journal={arXiv:1510.03467}
}

\bib{suciu}{article}{
   author={Suciu, A. I.},
   title={Fundamental groups, Alexander invariants, and cohomology jumping
   loci},
   conference={
      title={Topology of algebraic varieties and singularities},
   },
   book={
      series={Contemp. Math.},
      volume={538},
      publisher={Amer. Math. Soc., Providence, RI},
   },
   date={2011},
   pages={179--223}
}	

\end{biblist}
\end{bibdiv}

\end{document}